\newcommand{\R}{{\mathbb R}}
\newcommand{\N}{{\mathbb N}}
\newcommand{\re}{{\mathbb R}}
\newcommand{\one}{{\mathds{1}}}
\newcommand{\cP}{{\mathcal P}}
\let\ds=\displaystyle
\def\R{{\mathbb R}}
\def\N{{\mathbb  N}}
\def\N{{\mathbb N}}
\def\ds{\displaystyle}
\numberwithin{equation}{section}
 \newtheorem{theorem}{Theorem}[section]
 \newtheorem{lemma}{Lemma}[section]
 \newtheorem{proposition}{Proposition}[section]
 \newtheorem{definition}{Definition}[section]
 \newtheorem{corollary}{Corollary}[section]
 \newtheorem{remark}{Remark}[section]
 \newtheorem{assumption}{Assumption}[section]
\setlist[itemize,1]{leftmargin=\dimexpr 26pt-.3in}
\setlist[enumerate,1]{leftmargin=\dimexpr 26pt-.3in}
\setlist[description,1]{leftmargin=\dimexpr 26pt-.3in}
\setlist[itemize,2]{leftmargin=\dimexpr 26pt-.1in}
\setlist[enumerate,2]{leftmargin=\dimexpr 26pt-.1in}
\setlist[description,2]{leftmargin=\dimexpr 26pt-.1in}
\begin{document}

\title{Deterministic mean field games with control on the acceleration and state constraints}

\author{{
Yves Achdou \thanks { Universit{\'e} de Paris and  Sorbonne Universit{\'e}, CNRS, Laboratoire Jacques-Louis Lions, (LJLL), F-75006 Paris, France, achdou@ljll-univ-paris-diderot.fr},
 Paola Mannucci\thanks{Dipartimento di Matematica ``Tullio Levi-Civita'', Universit{\`a} di Padova, mannucci@math.unipd.it}}, \\
{Claudio Marchi \thanks{Dipartimento di Ingegneria dell'Informazione, Universit{\`a} di Padova, claudio.marchi@unipd.it},
  Nicoletta Tchou\thanks{Univ Rennes, CNRS, IRMAR - UMR 6625, F-35000 Rennes, France, nicoletta.tchou@univ-rennes1.fr}}}

\maketitle
\begin{abstract}
  We consider deterministic mean field games in which the agents control their acceleration and are constrained to remain in a domain of $\R^n$. We study relaxed equilibria in the Lagrangian setting; they  
are described by a probability measure on trajectories. The main results of the paper concern the existence of relaxed equilibria under suitable assumptions.  The fact that the optimal trajectories of the 
related optimal control problem solved by the agents do not form a compact set brings a difficulty in the proof of existence. The proof also requires closed graph properties of the  map which associates to initial conditions the set of  optimal trajectories. 
\end{abstract}

\section{Introduction}
\label{sec:introduction}

 The theory of  mean field games (MFGs for short) is more and more investigated since the pioneering works
\cite{MR2269875,MR2271747,MR2295621} of  Lasry and Lions: it aims at studying the  asymptotic behaviour of   differential games (Nash equilibria) as the number  of agents tends to infinity. The dynamics of the agents can be either stochastic or deterministic. Concerning the latter case, we refer to \cite{cardaliaguet2010} for a detailed study of  deterministic MFGs in which the interactions between the agents are modeled by a nonlocal regularizing operator acting on the distribution of  the states of the agents.
They are described by a system of PDEs coupling a continuity  equation for the density of the distribution of states (forward in time) and a  Hamilton-Jacobi  (HJ) equation for the optimal value of a representative agent (backward in time).  If the interaction cost depends  locally on the density of the distribution (hence is not regularizing), then, in the deterministic case, the available theory  mostly deals with so-called variational MFGs, see \cite{cardaliaguet2014second}.

 The major part of the literature on deterministic mean field games addresses situations
 when the dynamics of a given agent is strongly controllable: for example, in crowd motion models,
 this happens if the control of a given agent is its velocity. 
 Under the strong controllability assumption, it is possible to study realistic models
 in which the agents are constrained to remain in a given region $K$ of the space state,
 i.e. state constrained deterministic MFGs. An important  difficulty in state constrained 
deterministic MFGs is that nothing prevents  the agents from  concentrating on the boundary 
 $\partial K$ of the state space; let us call $m(t)$ the distribution of states at time $t$.
Even if $m(0)$ is absolutely continuous, there may exist some 
 $t>0$, such that $m(t)$ has a singular part supported on $\partial K$ and the absolute continuous part
of $m(t)$ with respect to Lebesgue measure blows up near $\partial K$.
  This was first observed in some applications of MFGs to macroeconomics, 
see \cite{MR3268061,achdou2017income}. From the theoretical viewpoint, 
the main issue is that, as we have already said, the distribution of states 
is generally not absolutely continuous with respect to Lebesgue measure;
 this makes it difficult to 
characterize the state distribution by means of partial differential equations.
 These theoretical difficulties have been addressed in \cite{MR3888967}: 
following ideas contained in \cite{BenamouBrenier2000monge,BenamouCarlier2015ALG2,MR3556062},
 the authors of  \cite{MR3888967} introduce a weak or relaxed notion of equilibrium,
 which is defined in a  Lagrangian setting rather than with PDEs. 
Because there may be several optimal trajectories starting from a given point
 in the state space, the solutions of the  relaxed MFG are  probability measures
 defined on a set  of  admissible trajectories. 
 Once the existence of a relaxed equilibrium is ensured,
 it is then possible to investigate the regularity of solutions
 and give a meaning to the system of PDEs and the related boundary conditions:
 this was done in  \cite{cannarsa:hal-01964755}.

On the other hand, if the agents control their acceleration rather than their velocity, the strong controllability property is lost.
 In \cite{MR4102464}, we have studied deterministic mean field games in the whole space  $\R^n$  with finite time horizon in which the dynamics of a generic agent is controlled by the acceleration, see also \cite{MR4132067}.
 The state variable is the pair $(x,v)\in \R ^n\times \R^n$  where $x$ and $v$ respectively stand  for the position and  the velocity. The system of PDEs describing the MFG is then \begin{equation}
\label{eq:MFGA}
\left\{
\begin{array}{rll}
(i)&-\partial_t u-v\cdot D_xu+H(x,v,D_vu)-F[m(t)](x,v)=0&\qquad \textrm{in }\re^{2n}\times (0,T)\\
(ii)& \partial_t m +v\cdot D_xm-{\rm div}_v(D_{p_v}H(x,v,D_vu)m)=0&\qquad \textrm{in }\re^{2n}\times (0,T)\\
(iii)& m(x,v,0)=m_0(x,v), u(x,v,T)=G[m(T)](x,v)\,,&\qquad \textrm{on }\re^{2n}
\end{array}\right.
\end{equation}
where  $u=u(x,v,t)$, $m=m(x,v,t)$, $(x,v)\in\R^{2n}$, $t\in(0,T)$ and the Hamiltonian
$ (x,v,p_x,p_v)\mapsto -v\cdot p_x+H(x,v,p_v)$  is neither  strictly convex  nor  coercive
with respect to $p=(p_x, p_v)$. Hence the available results 
on the regularity of the value function $u$ of the associated optimal control problem 
(\cite{MR2041617,cardaliaguet2010})  and on the
existence of a solution of the MFG system (\cite{cardaliaguet2010}) cannot be applied.
In \cite{MR4102464},  the existence of a weak solution of the MFG system is proved via a vanishing viscosity method;  the distribution of states is characterized as the image of the initial distribution by the flow associated with the optimal control. 

In traffic theory and also in economics, the models may  require that the position of the agents
belong to a given compact subset  $\overline \Omega$ of $\R^n$, and state constrained mean field games with control on the acceleration must be considered. 
In the present paper, we wish to investigate some examples of such mean field games 
 and address  the first step of the program followed by the authors of \cite{MR3888967} in the strongly controllable case:  we wish to prove the existence of a relaxed mean field equilibrium in the Lagrangian setting under suitable assumptions. 

\subsection{Our program}\label{sec:our-program}
Most of the paper is devoted to the case when the running cost depends separately on the acceleration and on the other  variables, and is quadratic with respect to the acceleration. We will assume that the acceleration can be chosen in the whole space. Thus,  admissible trajectories are  pairs of functions $(\xi, \eta)$,  $\xi\in C^1([0,T]; \overline \Omega)$, $\eta\in W^{1,2}([0,T];\R^n)$ and $\xi'=\eta$. 
An example of state constrained mean field games in which the  acceleration takes its values in a 
compact of $\R^n$, (the optimal value  may therefore take the value $+\infty$ in the interior of the $x$-domain), will be studied in a forthcoming work. 

\medskip

In view of the  applications to traffic models, we will  deal with the cases  when 
\begin{enumerate}
\item $\Omega$ is a bounded domain of $\R^n$ with a smooth boundary
\item  $n=1$ and $\Omega$ is a bounded straight line segment
\item $\Omega$ is a  bounded polygonal domain of $\R^2$ (that we suppose convex for simplicity).
\end{enumerate}
 In the one-dimensional case, the simplicity of the geometry will allow us to obtain accurate information on the optimal trajectories, and in turn to get a more general existence result for the mean field game, yet under an additional assumption on the running cost.  On the contrary, when dealing with the polygonal domain, we will  make special assumptions in order to obtain an existence result.

\bigskip

Recall that the admissible states are the pairs $(x,v)\in \overline\Omega\times \R^n$, where $\Omega$ is a bounded domain of $\R^n$. At first glance, we see that  some restrictions will have to be
 imposed on the initial distribution of states: 
  indeed, for $x\in \partial \Omega$ and  $v$ pointing outward $\Omega$ at $x$,
 there is no admissible trajectory taking the value $(x,v)$ at $t=0$; 
hence the optimal value $u(x,v,0)$ takes the value $+\infty$;
 the definition of the mean field equilibrium would then be unclear 
if the probability that the initial state takes such values $(x,v)$ was not zero.
 
As in \cite{MR3888967}, the aim is to prove the existence of  relaxed MFG equilibria
which are described by  probability measures defined on a  set  of admissible trajectories.
The proof  involves Kakutani's fixed point theorem,  see \cite{MR46638},
 applied to a multivalued map defined on a suitable  convex and compact set of 
probability measures on a suitable set of admissible trajectories (itself endowed with the $ C^1([0,T]; \R^n )\times C^0([0,T]; \R^n )$-topology).  Difficulties in applying  Kakutani's fixed point theorem will arise from the fact that  all the optimal trajectories do not form a compact subset of $ C^1([0,T]; \R^n )\times C^0([0,T]; \R^n )$ (due to the lack of strong controllability). This explains why we shall need additional assumptions, either on the support of the initial distribution of states, or, in some cases, on the running cost.


\paragraph{Assumptions on the support of the initial distribution of states}
Note that if a set of trajectories is a compact metric space, then  probability measures 
on this set form a compact set, as required by Kakutani's theorem. Therefore,
 a natural strategy is to identify a  compact set of trajectories which contains
 the optimal trajectories whose initial value belongs to the support
 of the initial distribution of states.
In such a strategy, we therefore need to identify a modulus of continuity common to all the velocity laws
 of the optimal
 trajectories; since the running cost is quadratic in the acceleration, 
the more natural idea is to look for a uniform  bound on the $W^{1,2}$ norms of the velocity 
laws of the optimal trajectories. But, due the lack of strong controllability, if  $x$ and 
 $v$ respectively belong to  $\partial \Omega$ and  to the boundary of the tangent cone 
to $\overline \Omega$ at $x$ (the optimal value $u(x,v,0)$ is finite), there exist sequences 
 $(x_i,v_i)_{i\in \N}$ tending to $(x, v)$ such that the optimal value  $u(x_i,v_i,0)$ blows up when $i\to \infty$; in other words, the cost of preventing the trajectories with initial value $(x_i,v_i)$ from exiting the domain  tends to $+\infty$ as $i\to \infty$.
 Hence, to get uniform bounds on the $W^{1,2}$ norms of the velocity law,
 the support of the initial distribution of states must not contain such
 sequences $(x_i, v_i)$. Sufficient conditions on the support of the initial distribution will
 be given.

Furthermore,  Kakutani's fixed point theorem  requires a closed graph property for the multivalued map which maps a given point $(x,v)$ to the set of optimal trajectories starting from $(x,v)$.  An important part of our work is therefore  devoted to proving a closed graph property for the latter map.
Note that this issue has its own interest in optimal control theory, independently from mean field games.

\paragraph{Assumptions on the running cost}
We will see that if $n=1$, we will able to get rid of the above-mentioned restrictions on the support of the initial distribution of states, if an additional assumption is made on the running cost, namely that it does not favor the trajectories that exit the domain. The existence of equilibria is then proved by  approximating the initial distribution $m_0$  by a sequence $m_{0,k}$ for which Kakutani's theorem can be applied, and by passing to the limit.  To pass to the limit, accurate information on the optimal trajectories are needed. We managed to obtain them  for $n=1$  only.

\bigskip

\subsection{Organization of the paper}\label{sec:organization-paper}
The paper is organized as follows: Section~\ref{sec:state-constr-optim-1} is devoted to state constrained optimal control problems in a bounded domain of $\R^n$ with a smooth boundary, and in particular to the closed graph properties of the above mentioned multivalued map. Although this issue seems to be important in several applications, we were not able to find any relevant result in the available literature. Then, Section \ref{sec:MFG-contr-accel} deals with an existence result for a related mean field equilibrium in the Lagrangian setting, under sufficient conditions on the support of the initial distribution of states. A variant with a non quadratic cost will be investigated as well.   In Section~\ref{sec:more-accur-results}, we address the case when the dynamics take place in a bounded straight line segment, ($n=1$): under a natural additional assumption on the running cost, we are able to  prove the existence of mean field equilibria without any restriction on the  initial distribution of states; the proof requires a quite  careful study of the optimal trajectories. Finally, in Section~\ref{sec:contr-probl-polyh}, we discuss the case of bounded and convex polygonal domain of $\R^2$ and put the stress on the closed graph result which requires a special care near the corners.

\section{State constrained optimal control problems in a  domain of $\R^n$}
\label{sec:state-constr-optim-1}
\subsection{Setting and notation}
\label{sec:setting-notation-8}
Let  $\Omega$ be a  bounded domain of $\R^n$  with a boundary $\partial \Omega$ of class $C^2$.
 For $x\in \partial \Omega$, let $n(x)$ be the unitary vector normal to $\partial \Omega$  pointing outward 
$\Omega$.
We will use the signed distance to  $\partial \Omega$, $d: \R^n\to \R$,
\begin{displaymath}
 d(x)=\left\{
    \begin{array}[c]{rcl}
      \min_{y\in \partial \Omega} |x-y|,\quad&\hbox{if}&\quad x\notin \Omega,\\
      -\min_{y\in \partial \Omega} |x-y|,\quad&\hbox{if}&\quad x\in \Omega.
    \end{array}
\right.
\end{displaymath}
Since $\partial \Omega$ is $C^2$, the function $d$ is $C^2$ near  $\partial \Omega$. In particular, for all $x\in \partial \Omega$, $\nabla d(x)= n(x)$.

Given a time horizon $T$ and a pair  $(x,v)\in \overline \Omega \times \R^n$, we are interested in optimal control problems for which the dynamics is of the form:
\begin{equation}\label{eq:1}
\left\{
\begin{array}{rcll}
 \xi'(s)&=&\eta(s),\quad   &s\in (0,T),\\
 \eta'(s)&=&\alpha(s),\quad   &s\in (0,T),\\
\xi(0)&=&x, &\\ \eta(0)&=&v.& 
\end{array}\right.
\end{equation}
The state variable is the pair $(\xi, \eta)$ and the state space is  $\Xi=\overline \Omega \times \R^n$. The optimal control problem consists of minimizing the cost
\begin{equation}
\label{eq:2}
J(\xi,\eta, \alpha)
=\int_0^T \left(\ell(\xi(s),\eta(s) ,s) +\frac 1 2   |\alpha|^2(s)
 \right) ds+g(\xi(T), \eta(T)),
\end{equation}
on the dynamics given by \eqref{eq:1} and staying in $\Xi$.
\begin{assumption}
    \label{sec:setting-notations-0}
    Here, $\ell: \Xi\times [0,T]\to \R$ is a  continuous function, bounded from below.
    The terminal cost $g: \Xi\to \R$ is also assumed to be  continuous and bounded from below.
     Set 
     \begin{equation}
       \label{eq:3}
       M= \|g_-\|_{L^\infty (\Xi) }  +   \|\ell_-\|_{L^\infty( \Xi\times [0,T]) } .
     \end{equation}

\end{assumption}
It is convenient to define the set of admissible trajectories as follows:
\begin{equation}
\label{eq:4}
\Gamma=\left\{
  \begin{array}[c]{ll}
(\xi, \eta)\in C^1([0,T];\R^n) \times AC([0,T];\R^n) \;:\; &\left|
\begin{array}[c]{ll}
  \xi'(s)=\eta(s), \; &\forall s\in [0,T] 
  \\   (\xi(s),\eta(s))\in \Xi,  &\forall s\in [0,T] 
\end{array}\right.  \end{array}
\right\}.
\end{equation}
For any $(x,v)\in \Xi$, set
\begin{equation}
\label{eq:5}
\Gamma[x,v]=\{ (\xi, \eta) \in \Gamma:\,\xi(0)=x,\, \eta(0)=v\}.
\end{equation}
Then,  $\Gamma^{\rm{opt}}[x,v]$ is the set of all $ (\xi, \eta)  \in  \Gamma[x,v]$ such that $\eta \in W^{1,2}(0,T, \R^n) $ and
$(\xi, \eta, \eta')$ achieves the minimum of $J$ in $\Gamma[x,v]$.
\\
 Note that $\Gamma[x,v]=\emptyset$ if $x\in \partial \Omega$ and $v$ points outward $\Omega$.
 This is the reason why we introduce $\Xi^{\rm{ad}}$ as follows:
  \begin{equation}
    \label{eq:6}
\Xi^{\rm{ad}}=\{  (x,v):  x\in \overline \Omega, \;  v\cdot n(x)\le 0 \hbox{ if } x\in \partial \Omega
 \}\subset \Xi.
  \end{equation}

  \begin{lemma}
    \label{sec:contr-probl-conv}
For all $(x,v)\in \Xi^{\rm{ad}}$, the optimal value 
\begin{equation}
  \label{eq:7}
u(x,v)=\inf_{(\xi,\eta)\in \Gamma[x,v]}J(\xi,\eta, \eta')
\end{equation}
 is finite.
The function $u$ is lower semi-continuous on $ \Xi^{\rm{ad}}$.
  \end{lemma}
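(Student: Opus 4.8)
\emph{Overall approach.} I would prove the two assertions separately; the lower semicontinuity is an instance of the direct method of the calculus of variations, and the quadratic (hence coercive and convex) dependence of $J$ on the acceleration is exactly what makes it work.

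\emph{Finiteness.} Since $\ell$ and $g$ are bounded from below by the standing assumptions, every admissible trajectory satisfies $J(\xi,\eta,\eta')\ge -M+\tfrac12\int_0^T|\eta'|^2\,ds\ge -M$ (with the convention that the integral is $+\infty$ when $\eta\notin W^{1,2}$), so $u(x,v)\ge -M$. For finiteness from above it suffices to produce one $(\xi,\eta)\in\Gamma[x,v]$ with $\eta\in W^{1,2}$ and $J(\xi,\eta,\eta')<\infty$. If $x\in\Omega$, I would decelerate linearly and then stay at rest: taking $\eta(s)=v\,(1-s/\tau)^+$ one has $\sup_{s\in[0,T]}|\xi(s)-x|=|v|\tau/2$, so any $\tau\le T$ with $|v|\tau/2<\operatorname{dist}(x,\partial\Omega)$ keeps $\xi$ inside $\Omega$. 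If $x\in\partial\Omega$ with $v\cdot n(x)\le0$, I would first push the trajectory into $\Omega$: with the constant acceleration $\alpha(s)\equiv-\lambda n(x)$ on a short interval $[0,\delta]$, using that $d\in C^2$ near $\partial\Omega$ and $n(x)\cdot v\le0$, a Taylor expansion gives $d(\xi(s))=s\,(n(x)\cdot v)-\tfrac\lambda2 s^2+O(s^2)$, which is $<0$ for $s\in(0,\delta]$ provided $\lambda$ is large enough (depending only on an upper bound for $\|D^2d\|$ near $\partial\Omega$ and on $|v|$); hence $\xi(\delta)\in\Omega$ and one concatenates with the previous construction on $[\delta,T]$. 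In every case the resulting $(\xi,\eta)$ ranges in a compact subset of $\Xi$, on which $\ell$ and $g$ are bounded by continuity, and $\eta'\in L^2$, so $J(\xi,\eta,\eta')<\infty$ and $u(x,v)\in\R$.

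\emph{Lower semicontinuity.} Let $(x_k,v_k)\to(x,v)$ in $\Xi^{\rm{ad}}$ and set $L:=\liminf_k u(x_k,v_k)$. If $L=+\infty$ there is nothing to prove, and $L>-\infty$ by the bound above; passing to a subsequence we may assume $u(x_k,v_k)\to L<\infty$. Pick $(\xi_k,\eta_k)\in\Gamma[x_k,v_k]$ with $\eta_k\in W^{1,2}$ and $J(\xi_k,\eta_k,\eta_k')\le u(x_k,v_k)+1/k$. Then $\tfrac12\int_0^T|\eta_k'|^2\,ds\le J(\xi_k,\eta_k,\eta_k')+M$ is bounded, and since $\eta_k(0)=v_k$ is bounded, $(\eta_k)_k$ is bounded in $W^{1,2}(0,T;\R^n)$. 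By the compact embedding $W^{1,2}\hookrightarrow C^0$ and weak compactness of bounded sets in $L^2$, along a further subsequence $\eta_k\to\eta$ uniformly on $[0,T]$ and $\eta_k'\rightharpoonup\eta'$ in $L^2$; setting $\xi(s):=x+\int_0^s\eta$ gives $\xi_k\to\xi$ in $C^1([0,T];\R^n)$. Since $\overline\Omega$ is closed, the pointwise constraints $(\xi_k(s),\eta_k(s))\in\Xi$ pass to the limit, so $(\xi,\eta)\in\Gamma[x,v]$. Finally, uniform convergence of $(\xi_k,\eta_k)$ keeps $(\xi_k(s),\eta_k(s),s)$ in a fixed compact set, whence $\int_0^T\ell(\xi_k,\eta_k,s)\,ds\to\int_0^T\ell(\xi,\eta,s)\,ds$ and $g(\xi_k(T),\eta_k(T))\to g(\xi(T),\eta(T))$ by continuity, while $\int_0^T|\eta'|^2\,ds\le\liminf_k\int_0^T|\eta_k'|^2\,ds$ by weak lower semicontinuity of the $L^2$-norm. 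Therefore $u(x,v)\le J(\xi,\eta,\eta')\le\liminf_k J(\xi_k,\eta_k,\eta_k')=L$, which is the desired lower semicontinuity (and incidentally yields existence of an optimal trajectory when $(x_k,v_k)\equiv(x,v)$).

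\emph{Main obstacle.} The only genuinely delicate point is the construction of an admissible trajectory issued from a boundary point $x$ at which $v$ is tangent to $\partial\Omega$: one must exploit the $C^2$-regularity of $\partial\Omega$ (equivalently, a curvature bound on $d$) to guarantee that a sufficiently strong inward push keeps $\xi$ in $\overline\Omega$ on all of $[0,T]$, not merely for infinitesimal times. The rest is the textbook direct method, with coercivity in the acceleration providing the $W^{1,2}$-compactness and convexity providing the weak lower semicontinuity used above.
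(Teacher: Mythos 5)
Your proof is correct and structurally the same as the paper's (construct an admissible finite-cost trajectory, then invoke the direct method for lower semicontinuity); the only real difference is in the construction at a boundary point with tangent velocity. There, the paper introduces a local $C^2$-diffeomorphism $\Phi_x$ flattening $\partial\Omega$ (with $n$-th coordinate equal to the signed distance $d$), decelerates linearly in the flattened coordinates so that the trajectory slides along $\partial\Omega$, and pulls back by $\Psi_x=\Phi_x^{-1}$; you instead Taylor-expand $d$ along the curve $\xi(s)=x+sv-\tfrac{\lambda}{2}s^2 n(x)$ and push the state strictly into $\Omega$. Both rely on $\partial\Omega\in C^2$, but the paper's chart is also reused in the proof of Lemma~\ref{sec:clos-graph-prop-12}, which is why they set it up here, whereas your inward-push argument is more self-contained for this lemma alone. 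One small point worth being explicit about in your write-up: in the bound $d(\xi(s))\le s\,(n(x)\cdot v)-\tfrac{\lambda}{2}s^2+O(s^2)$, the $O(s^2)$-constant itself involves $\lambda^2 s^2$ (from $|\xi(s)-x|^2$), so after choosing $\lambda\gtrsim\|D^2d\|\,|v|^2$ you must also shrink $\delta$ so that $\|D^2d\|\,\lambda^2\delta^2\lesssim\lambda$; this is harmless but should be said. The lower-semicontinuity part is exactly the standard coercivity/weak-compactness/weak-lsc argument that the paper cites without detail, and you have filled it in correctly.
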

  \begin{proof}
Let us consider $(x,v) \in \Xi^{\rm{ad}}$. We make out two cases:
\begin{enumerate}
\item $x\in \Omega$ or $x\in \partial \Omega$ and $v\cdot n(x)<0$: in this case, for $\overline t$ small enough,  the  trajectory $(\xi,\eta)$ defined by 
  \begin{eqnarray*}
\left\{
         \begin{array}[c]{rcllrclrcl}
           \eta(s)&=& (1- \frac s {\overline t})v\quad&\hbox{and}\quad  &\xi(s)=& x  + (s- \frac {s^2} {2\overline t})v,\quad  &\hbox{if }\quad  &  0\le s\le \overline t,\\
 \eta(s)&=& 0 \quad &\hbox{and} \quad &\xi(s)=& x+ \frac {\overline t } 2 v   ,\quad &\hbox{if }\quad  &   \overline t\le s \le T,
\end{array}
\right.
\end{eqnarray*}
 is admissible and $J(\xi,\eta, \eta')$ is finite.
\item $x\in \partial \Omega$ and $v\cdot n(x)=0$.  We make a simple observation that will also be used in the proof of Lemma \ref{sec:clos-graph-prop-12} below: 
for all $x\in \partial\Omega$, there exists an open neighborhood $V_x$ of $x$ in $\R^n$, a positive number $R_x$ and a $C^2$-diffeomorphism $\Phi_x$ from $V_x$ onto $B(0, R_x)$ such that for all $y\in V_x$,
the $n^{\rm{th}}$ coordinate of $\Phi_x(y)$ is $d(y)$, i.e. $\Phi_{x,n}(y)= d(y)$. Hence, $ \Phi_x|_{V_x\cap \Omega}$ is a $C^2$-diffeomorphism  from $V_x\cap \Omega$ onto $ B_-(0, R_x)=B(0, R_x)\cap \{x_n<0\}$, and 
$ \Phi_x|_{V_x\cap \partial\Omega}$ is a $C^2$-diffeomorphism  from $V_x\cap \partial \Omega$ onto 
$B(0, R_x)\cap \{x_n=0\}$. Let us also call $\Psi_x$ the inverse of $\Phi_x$, which is a $C^2$-diffeomorphism  from $B(0, R_x)$ onto $V_x$.  Note that 
\begin{equation}
  \label{eq:8}
\nabla d(y)=D\Phi_x^T (y) e_n, \quad \hbox{for all } y\in V_x,
\end{equation}
where $e_n$ is the $n^{\rm{th}}$ vector of the canonical basis. In particular, $n(x)=D\Phi_x^T (x) e_n$.

In the present case, let us set $\hat x= \Phi_x(x)$ and $\hat v= D\Phi_x(x) v$. It is easy to see that $\hat x_n=0$ and $\hat v_n=0$. Then, for $\overline t$ small enough,  the  trajectory $(\xi,\eta)$ defined by $\xi(s)=\Psi_x(\hat \xi(s))$, $\eta(s)= \frac{d \xi}{d s}(s)= D\Psi_x(\hat \xi(s))\hat \eta(s) $ for all $s\in [0,T]$, with  \begin{eqnarray*}
\left\{
         \begin{array}[c]{rcllrclrcl}
           \hat\eta(s)&=& (1- \frac s {\overline t})\hat v\quad & \hbox{and}\quad  &\hat\xi(s)=& \hat x  + (s- \frac {s^2} {2\overline t})\hat v,\quad  &\hbox{if }\quad  &  0\le s\le \overline t,\\
\hat \eta(s)&=& 0 \quad&\hbox{and}\quad &\hat \xi(s)=& \hat x+ \frac {\overline t } 2 \hat v   ,\quad &\hbox{if }\quad  &   \overline t\le s \le T,
\end{array}
\right.
\end{eqnarray*}

is admissible and $J(\xi,\eta, \eta')$ is finite.
\end{enumerate}
 The lower semi-continuity of $u$  on  $\Xi^{\rm{ad}}$ stems from standard arguments in  the calculus of variations.
  \end{proof}

   \subsection{Closed graph properties}
\label{sec:clos-graph-prop}
An important feature  of the optimal control problem described above is the closed graph property:

\begin{proposition}\label{sec:contr-probl-polyg-1}
  Consider a closed subset $\Theta$ of $\Xi^{\rm ad}$.
Assume that  for all sequences  $(x^i, v^i)_{i\in \N}$  such that for all $i\in \N$, $ (x^i, v^i)\in \Theta$ and $\lim_{i\to +\infty} (x^i, v^i)=(x, v)\in \Theta$, the following holds:
\\
if  $x\in  \partial \Omega$,  then 
\begin{equation}
  \label{eq:9}
\left((v^i \cdot \nabla d (x^i) )_+\right)^3 = o\left( \left |d(x^i)\right| \right) ;
\end{equation}
then  the graph of the multivalued map
\begin{displaymath}
  \begin{split}
    \Gamma^{\rm opt}: \; &\Theta \rightarrow \Gamma,
    \\
    &    (x,v) \mapsto \Gamma^{\rm opt}[x,v]      
  \end{split}
\end{displaymath}
is closed, which means: for  any sequence $(y^i, w^i)_{i\in \N}$ such that for all $i\in \N$,   $(y^i, w^i)\in  \Theta$ with $(y^i, w^i)\to (y,w)$ as $i\to \infty$, consider a sequence     $(\xi^i,\eta^i)_{i\in \N}$ such that for all $i\in \N$, $(\xi^i, \eta^i) \in \Gamma^{\rm{opt}}[y^i, w^i]$; if $(\xi^i,\eta^i)$ tends to $(\xi,\eta)$ uniformly,  then $(\xi,\eta)\in \Gamma^{\rm{opt}}[y, w]$.
\end{proposition}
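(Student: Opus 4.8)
The plan is to pass to the limit in the optimality of the trajectories $(\xi^i,\eta^i)$ using the direct method. First I would establish compactness: since $(\xi^i,\eta^i)\to(\xi,\eta)$ uniformly and each $(\xi^i,\eta^i)\in\Gamma[y^i,w^i]$, the limit clearly lies in $\Gamma[y,w]$ (the constraint $(\xi(s),\eta(s))\in\Xi$ is closed, $\xi'=\eta$ passes to the limit, and $\xi(0)=y$, $\eta(0)=w$). The crucial step is to bound $\|\eta^i\|_{W^{1,2}(0,T;\R^n)}$ uniformly in $i$. By optimality, $J(\xi^i,\eta^i,(\eta^i)')\le J(\zeta^i,\vartheta^i,(\vartheta^i)')$ for any competitor $(\zeta^i,\vartheta^i)\in\Gamma[y^i,w^i]$; so I would construct, for each $i$, an admissible competitor whose cost is uniformly bounded. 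This is exactly where hypothesis \eqref{eq:9} enters: when $y\in\partial\Omega$, I would use the flattening diffeomorphism $\Phi_x$ from the proof of Lemma \ref{sec:contr-probl-conv} to reduce to the half-space model, and build a competitor that kills the outward velocity component $(w^i\cdot\nabla d(y^i))_+$ over a short time interval of length comparable to $|d(y^i)|^{1/3}$ (or an appropriate power), while staying inside $\Omega$; the scaling $((v^i\cdot\nabla d(x^i))_+)^3 = o(|d(x^i)|)$ is precisely what makes the acceleration cost $\int\frac12|\alpha|^2$ of this braking maneuver stay bounded. Away from $\partial\Omega$, or when $y\in\Omega$, the construction in case 1 of Lemma \ref{sec:contr-probl-conv} gives a uniformly bounded competitor directly. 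Combined with $M$ from Assumption \ref{sec:setting-notations-0} bounding the negative parts of $\ell$ and $g$, this yields $\sup_i\int_0^T|(\eta^i)'|^2\,ds <\infty$, hence $\eta^i$ is bounded in $W^{1,2}$ and, up to a subsequence, $(\eta^i)'\rightharpoonup \beta$ weakly in $L^2$; since $\eta^i\to\eta$ uniformly we get $\beta=\eta'$, so $\eta\in W^{1,2}(0,T;\R^n)$.

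Next I would pass to the limit in the cost. For the running-cost term $\int_0^T\ell(\xi^i,\eta^i,s)\,ds\to\int_0^T\ell(\xi,\eta,s)\,ds$ by uniform convergence of $(\xi^i,\eta^i)$, continuity of $\ell$, and dominated convergence (the trajectories stay in a bounded set of $\Xi$ because the $\eta^i$ are uniformly bounded and $\Omega$ is bounded). Similarly $g(\xi^i(T),\eta^i(T))\to g(\xi(T),\eta(T))$. For the acceleration term, weak lower semicontinuity of $\vartheta\mapsto\int_0^T\frac12|\vartheta|^2\,ds$ under $(\eta^i)'\rightharpoonup\eta'$ gives $\int_0^T\frac12|\eta'|^2\,ds\le\liminf_i\int_0^T\frac12|(\eta^i)'|^2\,ds$. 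Therefore $J(\xi,\eta,\eta')\le\liminf_i J(\xi^i,\eta^i,(\eta^i)') = \liminf_i u(y^i,w^i)$.

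Finally I would show $(\xi,\eta)$ is optimal in $\Gamma[y,w]$, i.e. $J(\xi,\eta,\eta')\le u(y,w)$. Here I would use the lower semicontinuity of $u$ from Lemma \ref{sec:contr-probl-conv} in the reverse direction: actually what is needed is an upper bound, $\limsup_i u(y^i,w^i)\le u(y,w)$, which is an \emph{upper} semicontinuity statement and does \emph{not} follow from Lemma \ref{sec:contr-probl-conv}. So instead I would argue directly: take any $(\tilde\xi,\tilde\eta)\in\Gamma[y,w]$ with $\eta\in W^{1,2}$ and $J(\tilde\xi,\tilde\eta,\tilde\eta')\le u(y,w)+\varepsilon$; perturb it into a competitor $(\tilde\xi^i,\tilde\eta^i)\in\Gamma[y^i,w^i]$ with $J(\tilde\xi^i,\tilde\eta^i,(\tilde\eta^i)')\to J(\tilde\xi,\tilde\eta,\tilde\eta')$ — again using the flattening charts near $\partial\Omega$ and the estimate \eqref{eq:9} to correct both the initial condition and the state constraint at cost $o(1)$ — so that $u(y^i,w^i)\le J(\tilde\xi^i,\tilde\eta^i,(\tilde\eta^i)')\to J(\tilde\xi,\tilde\eta,\tilde\eta')\le u(y,w)+\varepsilon$. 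Combining with the previous paragraph, $J(\xi,\eta,\eta')\le\liminf_i u(y^i,w^i)\le u(y,w)+\varepsilon$ for every $\varepsilon>0$, hence $(\xi,\eta)\in\Gamma^{\rm opt}[y,w]$.

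\textbf{Main obstacle.} The delicate point is the explicit construction of admissible competitors near $\partial\Omega$ with controlled acceleration cost — both for the uniform $W^{1,2}$ bound and for the recovery-sequence argument — since one must simultaneously adjust the initial velocity $w^i$ to its admissible tangential part, keep the trajectory inside the curved domain $\overline\Omega$, and keep $\int\frac12|\alpha|^2$ bounded; the cubic scaling in \eqref{eq:9} is exactly the threshold that makes this possible, and verifying that the curvature terms coming from $\Phi_x,\Psi_x$ do not destroy the estimate requires care. Everything else is a routine application of the direct method of the calculus of variations.
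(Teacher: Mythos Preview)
Your proposal is correct and follows essentially the same route as the paper, which packages your step-3 recovery-sequence construction as Lemma~\ref{sec:clos-graph-prop-12} and the remaining comparison argument as Lemma~\ref{sec:clos-graph-prop-7}. One small streamlining the paper achieves: your step~1 (a specific braking competitor to bound $\|(\eta^i)'\|_{L^2}$ uniformly) is redundant, since applying the recovery construction of step~3 to \emph{any} fixed $(\tilde\xi,\tilde\eta)\in\Gamma[y,w]$ already gives $J(\xi^i,\eta^i,(\eta^i)')\le J(\tilde\xi^i,\tilde\eta^i,(\tilde\eta^i)')\to J(\tilde\xi,\tilde\eta,\tilde\eta')$, which simultaneously yields the uniform acceleration bound and, after weak lower semicontinuity, the optimality of the limit.
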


\begin{remark}
  \label{sec:clos-graph-prop-3}
In Proposition \ref{sec:contr-probl-polyg-1}, the condition (\ref{eq:9}) is restrictive only for sequences  $(x^i, v^i)\in \Theta$ which tend to
  $(x,v)\in \Theta$ such that  $x\in \partial \Omega$ and $v$ is tangent to $\partial \Omega$ at $x$.
We will see that this assumption makes it possible to control the cost associated to the optimal trajectories starting from $ (x^i, v^i)$.
\end{remark}
\begin{remark}
  \label{sec:clos-graph-prop-11}
 In Section \ref{sec:optim-contr-probl} below, we will see that in dimension one ($\Omega$ is then a bounded straight line), and under stronger assumptions on the running cost,  the closed graph properties hold for  $\Theta=\Xi^{\rm{ad}}$.
\end{remark}

\begin{remark}
  \label{sec:clos-graph-prop-9}
In the  context of mean field games, see Section \ref{sec:MFG-contr-accel} below, the assumptions in Proposition \ref{sec:contr-probl-polyg-1} will yield  sufficient conditions on the support of the initial distribution for the existence of relaxed mean field equilibria.  
\end{remark}

The proof of Proposition \ref{sec:contr-probl-polyg-1} relies on Lemmas~\ref{sec:clos-graph-prop-12} and \ref{sec:clos-graph-prop-7} below.

\begin{lemma}
\label{sec:clos-graph-prop-12}
  Consider  $(x,v)\in\Xi^{\rm{ad}}$, $(\xi,\eta)\in \Gamma[x,v]$ such that $\eta\in W^{1,2} (0,T;\R^n)$ and a sequence $(x^i, v^i)_{i\in \N}$ such that for all $i\in \N$,   $(x^i, v^i)\in\Xi^{\rm{ad}}$ 
  and $\ds \lim_{i\to \infty} (x^i, v^i)= (x,v)$.
  \\
  Assume that one among the following  conditions is true:
  \begin{enumerate}
  \item $x\in \Omega$
  \item $x\in \partial \Omega$ and  $v\cdot n(x) < 0$  (hence $v^i\cdot \nabla d(x^i)< 0$ for $i$ large enough)
  \item $x\in \partial \Omega$, $v\cdot n(x) = 0$ and one among  the following properties is true:
    \begin{enumerate}
    \item  for $i$ large enough,  $v^i\cdot \nabla d(x^i)\le 0$ 
    \item  for $i$ large enough, $v^i\cdot \nabla d(x^i)> 0$  (hence $d(x^i)<0$) and
      \begin{equation}
        \label{eq:12}
   \lim_{i\to \infty}   \frac { (v^i \cdot \nabla d (x^i) )^3} { |d(x^i)| }=0 .
      \end{equation}
    \end{enumerate}
  \end{enumerate}
 Then  there exists a sequence  $(\xi^i,\eta^i)_{i\in \N}$ such that  $(\xi^i,\eta^i)\in \Gamma [x^i, v^i]$, $\eta^i\in W^{1,2} (0,T;\R^n)$, and  $(\xi^i,\eta^i)$ tends to $(\xi,\eta)$ 
  in $W^{2,2}(0,T;\R^n) \times  W^{1,2}(0,T;\R^n)$, hence uniformly in $[0,T]$.
\end{lemma}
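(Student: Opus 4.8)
The plan is to construct the approximating trajectories $(\xi^i,\eta^i)$ by first correcting the initial conditions and then gluing on the given trajectory $(\xi,\eta)$, taking care that the correction phase (a) keeps the trajectory inside $\Xi=\overline\Omega\times\R^n$, (b) has vanishing $W^{2,2}\times W^{1,2}$ norm as $i\to\infty$, and (c) brings $(\xi^i,\eta^i)$ exactly to the state $(\xi,\eta)$ agree on after a short time $\tau_i\to 0$. The natural ansatz is: on $[0,\tau_i]$ choose an acceleration that steers $(x^i,v^i)$ to $(\xi(\tau_i),\eta(\tau_i))$ while remaining admissible, and on $[\tau_i,T]$ set $(\xi^i,\eta^i)=(\xi,\eta)$ up to a rigid time shift; more precisely it is cleaner to keep the clock fixed and instead match $(\xi^i(\tau_i),\eta^i(\tau_i))=(\xi(\tau_i),\eta(\tau_i))$ with a cubic (Hermite) interpolation in position. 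Since the position Hermite polynomial matching $(x^i,v^i)$ at $0$ and $(\xi(\tau_i),\eta(\tau_i))$ at $\tau_i$ has acceleration of size $O\!\big(\tfrac{|x^i-\xi(\tau_i)|}{\tau_i^2}+\tfrac{|v^i-\eta(\tau_i)|}{\tau_i}\big)$, its contribution to $\|\eta^i-\eta\|_{W^{1,2}(0,\tau_i)}^2$ is $O\!\big(\tau_i\big(\tfrac{|x^i-\xi(\tau_i)|}{\tau_i^2}+\tfrac{|v^i-\eta(\tau_i)|}{\tau_i}\big)^2\big)$, which tends to $0$ provided $\tau_i$ is chosen tending to $0$ slowly enough relative to $|x^i-x|$ and $|v^i-v|$ (e.g. $\tau_i=\max(|x^i-x|,|v^i-v|)^{1/4}$). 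This handles convergence; the only real issue is keeping the trajectory in $\overline\Omega$.

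Next I would address admissibility case by case, using the local $C^2$ straightening diffeomorphism $\Phi_x$ introduced in the proof of Lemma \ref{sec:contr-probl-conv} to reduce, near a boundary point $x\in\partial\Omega$, to the model situation $\Omega$-locally-$=\{x_n<0\}$, working with $\hat\xi=\Phi_x(\xi)$. In Case 1 ($x\in\Omega$) the target trajectory stays in the interior for small $s$ and all $(x^i,v^i)$ near $(x,v)$ are interior, so the cubic interpolant stays in $\Omega$ for $i$ large by continuity. Case 2 ($v\cdot n(x)<0$) is similar: the straightened trajectory has $\hat\xi_n(0)=d(x^i)\le 0$ and $\hat\eta_n(0)=v^i\cdot\nabla d(x^i)<0$, so it immediately moves deeper inside; a quantitative estimate on the $n$-th component of the Hermite polynomial shows $\hat\xi^i_n(s)<0$ on $(0,\tau_i]$ for $i$ large. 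Case 3(a) ($v\cdot n(x)=0$, $v^i\cdot\nabla d(x^i)\le0$) reduces to the same computation since the initial normal velocity is nonpositive; when $d(x^i)=0$ one needs the normal component of the target to pull inward, which is where $d(\xi(\tau_i))<0$ (available since $\xi(s)\in\overline\Omega$ and in fact one can, if necessary, first push $(\xi,\eta)$ slightly inside — but more simply, since $v\cdot n(x)=0$, the unconstrained trajectory from $(x,v)$ with zero acceleration leaves $\overline\Omega$ only at second order, and the Hermite correction's normal part is controlled).

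The crux is Case 3(b): $x\in\partial\Omega$, $v\cdot n(x)=0$, but $v^i\cdot\nabla d(x^i)>0$ so the initial velocity points strictly outward, while $d(x^i)<0$ so there is a little room. The approximating trajectory must first decelerate the outward normal motion before $\hat\xi_n^i$ reaches $0$. In the straightened coordinates write $a_i=\hat\eta_n^i(0)=v^i\cdot\nabla d(x^i)>0$ and $b_i=-\hat\xi_n^i(0)=|d(x^i)|>0$; with constant normal deceleration of size $A_i$ the normal coordinate attains its maximum $-b_i+\tfrac{a_i^2}{2A_i}$, so to stay $\le 0$ we need $A_i\gtrsim a_i^2/b_i$, over a braking time $a_i/A_i\lesssim b_i/a_i$, producing a normal-acceleration cost of order $A_i^2\cdot(a_i/A_i)=a_i A_i\gtrsim a_i^3/b_i$. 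Hypothesis \eqref{eq:12}, $a_i^3/|d(x^i)|\to 0$, is exactly what makes this cost — hence the full $\|\eta^i-\eta\|_{W^{1,2}}$ — vanish. So the construction in Case 3(b) is: first a short braking phase of length $\sim b_i/a_i$ killing the outward velocity (and the curvature terms of $\partial\Omega$, of size $O(|\hat\eta^i|^2)$, are lower order and absorbed since $v^i\to v$), then the generic Hermite matching phase onto $(\xi,\eta)$; both phases have $W^{1,2}$-small velocity increments and the position stays in $\overline\Omega$ by the second-order estimate above together with $d$ being $C^2$. I expect this Case 3(b) estimate — reconciling the geometric constraint $d(x^i)<0$, the strictly outward initial velocity, and the quadratic acceleration cost, all through the straightening map — to be the main technical obstacle; the remaining cases and the final "hence uniformly in $[0,T]$" (by Sobolev embedding $W^{2,2}\hookrightarrow C^1$, $W^{1,2}\hookrightarrow C^0$ in the time variable) are routine.
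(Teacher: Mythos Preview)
Your plan is sound in Cases~1 and~2 and essentially coincides with the paper's: cubic (Hermite) interpolation onto the target, with the paper checking positivity of the explicit polynomial $\theta\mapsto \tfrac34-3\theta+\tfrac{15}4\theta^2-\theta^3$ on $[0,1]$ to verify $\hat\xi^i_n\le 0$ in Case~2. Your identification of Case~3(b) as the crux, together with the scaling $a_i^3/b_i\to 0$ (where $a_i=v^i\cdot\nabla d(x^i)$, $b_i=|d(x^i)|$), is exactly right.

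However, there is a genuine gap in your admissibility argument for Case~3 (both~(a) and~(b)). The claim that a direct Hermite match from $(\hat x^i,\hat v^i)$ to $(\hat\xi(\tau_i),\hat\eta(\tau_i))$ keeps $\hat\xi^i_n\le 0$ is false in general: a cubic with nonpositive values and nonpositive derivatives at both endpoints can still overshoot. Concretely, in straightened coordinates, take $\hat x^i_n=0$, $\hat v^i_n=0$ (so $x^i\in\partial\Omega$, $v^i$ tangent, squarely in Case~3(a)) and target $(\hat\xi_n(\tau_i),\hat\eta_n(\tau_i))=(0,-1)$ with $\tau_i=1$; the Hermite interpolant is $Q_n(s)=s^2(1-s)>0$ on $(0,1)$, so the trajectory exits $\overline\Omega$. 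More generally, whenever the target has $|\hat\xi_n(\tau_i)|$ small compared with $|\hat\eta_n(\tau_i)|\,\tau_i$ (e.g.\ $\xi$ hugs the boundary and then dives in), the direct match fails. Your fallback suggestions (``push $(\xi,\eta)$ slightly inside'', ``second-order departure'') do not address this.

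The paper's remedy is a \emph{two-phase} construction with a time shift. Phase~1, on $[0,t_{i,1}]$, applies acceleration only in the normal direction, steering $(\hat x^i_n,\hat v^i_n)$ to $(0,0)$ while the tangential motion is free; the resulting normal component is $\bigl((\hat x^i\cdot e_n)(1+2s/t_{i,1})+s\,\hat v^i\cdot e_n\bigr)(1-s/t_{i,1})^2$, which is manifestly $\le 0$ in Case~3(a) and is $\le 0$ in Case~3(b) precisely under the extra restriction $t_{i,1}\le 3b_i/a_i$. Phase~2, on $[t_{i,1},t_{i,2}]$ with $t_{i,2}$ bounded away from~$0$, corrects only the tangential components so as to match the \emph{time-shifted} target $\hat\xi(s-t_{i,1})$; since the normal correction in Phase~2 is identically zero, the normal component equals $\hat\xi_n(s-t_{i,1})\le 0$ and admissibility is automatic. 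The three simultaneous requirements on $t_{i,1}$ in Case~3(b), namely $t_{i,1}\to 0$, $t_{i,1}\le 3b_i/a_i$, and $b_i^2/t_{i,1}^3+a_i^2/t_{i,1}\to 0$, are exactly what hypothesis~\eqref{eq:12} makes compatible. Note also that your ``minimal braking'' choice $A_i\sim a_i^2/b_i$ gives braking time $\sim b_i/a_i$, which need not tend to~$0$ (take $b_i=a_i\to 0$: then $a_i^3/b_i=a_i^2\to 0$ but $b_i/a_i=1$); the paper's choice of $t_{i,1}$ avoids this by allowing $t_{i,1}\ll b_i/a_i$.
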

Before proving Lemma~\ref{sec:clos-graph-prop-12},  let us define a family of third order polynomials with values in $\R^n$:
\begin{definition}\label{sec:clos-graph-prop-2}
  Given $t>0$ and  $x,v,y,w\in \R^n$, let $Q_{t,x,v,y,w}$ be the unique third order polynomial  with value in $\R^n$ such that 
  \begin{equation}
    \label{eq:13}
Q_{t,x,v,y,w}(0)=x,\quad Q'_{t,x,v,y,w}(0)=v,\quad Q_{t,x,v,y,w}(t)=y,\quad Q'_{t,x,v,y,w}(t)=w.
  \end{equation}
It is given by 
\begin{equation}\label{eq:14}
  Q_{t,x,v,y,w}(s)= x+vs +\left(3\frac {y-x-vt} {t^2} - \frac {w-v} t \right) s^2 +\left(-2\frac {y-x-vt} {t^3} + \frac {w-v} {t^2} \right) s^3.
\end{equation}
The first and second derivatives of $Q_{t,x,v,y,w}$ are deduced immediately from  (\ref{eq:14}).
\end{definition}

\begin{proof}[Proof of Lemma~\ref{sec:clos-graph-prop-12}]
We are going to see that each of the three conditions  mentioned in the statement  makes it possible to explicitly  construct 
families of admissible trajectories fulfilling all the desired properties (in particular  with a finite 
energy or cost). The more tricky situations will arise when $x\in \partial \Omega$ and $v^i\cdot \nabla d(x^i)>0$
for $i$ large enough, in which case the restrictive condition (\ref{eq:12}) will be needed.  Since the construction is different in each of the three cases mentioned in Lemma~\ref{sec:clos-graph-prop-12}, we discuss each case separately.

\medskip

  \begin{enumerate}
  \item If $x\in \Omega$, then there exists $\overline t\in (0,T]$ and
    $c>0$ such that $ d(\xi(s))<-c$ for all $s\in [0,\overline t]$.
    We construct the sequence $(\xi^i, \eta^i)_{i\in \N}$ as follows:
    \begin{displaymath}
      \xi^i(s)=\left\{
        \begin{array}[c]{rcl}
           \xi(s)+ Q_{\overline t, \delta x^i, \delta v^i, 0,0 }(s),\quad &\hbox{if }& 0\le s \le \overline t,\\
           \xi(s),\quad &\hbox{if }& \overline t\le s \le T,
        \end{array}
\right.
    \end{displaymath}
where $\delta x^i=x^i-x$ and $\delta v^i=v^i-v$,
 see Definition \ref{sec:clos-graph-prop-2} for the third order  polynomial 
$Q_{\overline t,\delta x^i, \delta v^i, 0,0 }$. 
 It is clear that for $i$ large enough, $\xi^i(s)\in \overline \Omega$ for all $s\in [0,T]$, hence  $(\xi^i, \eta^i)\in \Gamma[x^i,v^i]$ and  $\eta^i\in W^{1,2} (0,T;\R^n)$.  On the other hand, it can be easily checked that
    \begin{equation}
      \label{eq:17}
     \lim_{i\to +\infty } \int_0^T \left\vert  \frac  {d\eta^i}{dt}(s) -\frac {d\eta}{dt}(s)
      \right\vert^2 ds = 0.
    \end{equation}
 This achieves the proof in the first case.
\item $x\in \partial \Omega$ and  $v\cdot n(x)<0$, hence
 for   $i$ large enough,   $v^i\cdot \nabla d( x^i)<0$.  We can always assume that the latter property holds for all $i$.

\medskip

\paragraph{Notation}
We use the same geometric arguments as in the proof of Lemma~\ref{sec:contr-probl-conv}:
 for the neighborhood $V_x$ mentioned there, there exists
$\hat T$, $0<\hat T \le  T$ such that $\xi(s)\in V_x\cap \overline \Omega$ for all $s\in [0,\hat T]$.
Consider the local chart $\Phi_x$ introduced in the proof of Lemma~\ref{sec:contr-probl-conv} and call $\Psi_x$ the inverse map,
from $B(0, R_x)$ onto $V_x$. For $t\in [0, \hat T]$, let us set  $\hat \xi(t)= \Phi_x(\xi(t))$,
$\hat \eta (t)= \frac {d\hat \xi} {dt}(t)= D\Phi_x(\xi(t))  \frac {d \xi} {dt}(t) $,   $\hat x= \Phi_x(x)$ and 
$\hat v= \hat \eta (0)= D\Phi_x(x)v $. It is clear that $\hat x_n=0$ and that $\hat v_n <0$. We also set $\hat x^i=\Phi_x(x ^i)$ and $\hat v^i=D\Phi_x(x ^i)  v^i$.

\medskip

Since $\eta\in W^{1,2}(0,T)$, there exists $\overline{t}\in (0,\hat T)$ such that for all
$s\in [0,\overline t]$, 
\begin{eqnarray}
  \label{eq:18}
  \frac 3 2 \hat v_n \le \hat \eta(s)\cdot e_n \le  \frac 1 2 \hat v_n,\\
  \label{eq:19}
  \frac {3s} 2 \hat v_n  \le (\hat \xi(s)-\hat x)\cdot e_n \le  \frac s 2 \hat v_n.
\end{eqnarray}
For $t_i\in [0,\overline t]$, we set
\begin{equation}
  \label{eq:20}
  \hat \xi^i (s)= \left\{
    \begin{array}[c]{rcl}
      Q_{t_i,\hat x^i, \hat v^i , \hat \xi(t_i),\hat \eta(t_i)}(s),\quad & \hbox{ if } \quad s \in [0,t_i],\\
      \hat \xi(s),\quad & \hbox{ if } \quad s\in [ t_i,\hat T],
    \end{array}
\right.
\end{equation}
and $\hat \eta^i(s)= \frac {d  \hat \xi^i}   {dt}(s)$ for  $s\in [0,\hat T]$.
Then, we define $\xi^i$ as follows:
\begin{equation}
  \label{eq:21}
   \xi^i (s)= \left\{
    \begin{array}[c]{rcl}
  \Psi_x\left( \hat \xi^ i (s)\right),\quad & \hbox{ if } \quad s \in [0,\hat T],\\
   \xi(s),\quad & \hbox{ if } \quad s\in [\hat T,T],
    \end{array}
\right.
\end{equation}
and $  \eta^i= \frac {d  \xi^i}   {dt}$. Let us first see why  $(\xi^i,\eta^i)\in \Gamma[x_i,v_i]$ for $t_i$ small enough and $i$ large enough.
 A straightforward calculation shows that for $s\le t_i$, 
 \begin{equation}
   \label{eq:22}
   \begin{split}
   \hat\xi^i(s) -\hat x= &(\hat x^i-\hat x) \left(1-3\frac {s^2} {t_i^2} +2 \frac {s^3} {t_i^3}\right)+ s\hat v^i \left(1-2\frac {s} {t_i} + \frac {s^2} {t_i^2}\right)\\ &+ (\hat \xi(t_i)-\hat x)\left(3\frac {s^2} {t_i^2} -2 \frac {s^3} {t_i^3}\right) +s \hat\eta(t_i) \left(-\frac {s} {t_i} + \frac {s^2} {t_i^2}\right).     
   \end{split}
 \end{equation}
Let us focus on $ (\hat \xi^i(s)-\hat x)\cdot e_n= \hat \xi^i(s)\cdot e_n $: from the formula above, we see that 
$ \hat \xi^i(s)\cdot e_n$ is the sum of four terms, the first three of them being nonpositive and the last one nonnegative for all $s\in [0,t_i]$.  Let us consider the sum of the last three terms, namely:
\begin{displaymath}
A(s)=
 s\hat v^i\cdot e_n \left(1-2\frac {s} {t_i} + \frac {s^2} {t_i^2}\right)+
\hat \xi(t_i) \cdot e_n\left(3\frac {s^2} {t_i^2} -2 \frac {s^3} {t_i^3}\right) +s \hat \eta(t_i)\cdot e_n \left(-\frac {s} {t_i} + \frac {s^2} {t_i^2}\right); 
\end{displaymath}
 from (\ref{eq:18}) and (\ref{eq:19}), we see that
\begin{displaymath}
  \begin{split}
      \hat \xi(t_i) \cdot e_n\left(3\frac {s^2} {t_i^2} -2 \frac {s^3} {t_i^3}\right) +s\hat\eta(t_i)\cdot e_n \left(-\frac {s} {t_i} + \frac {s^2} {t_i^2}\right)
 \le & 
    s\hat v \cdot e_n \left(  \frac {1}2 \left(3\frac {s^2} {t_i^2} -2 \frac {s^3} {t_i^3}\right) + 
 \frac {3 }2 \left(-\frac {s} {t_i} + \frac {s^2} {t_i^2}\right)\right)\\ =&  
  s \hat v \cdot e_n \left( -\frac 3 2 \frac {s}{ t_i}   + 3  \frac {s^2} {t_i^2} - \frac {s^3} {t_i^3}  \right)  \end{split}
\end{displaymath}
for all $s\in [0,t_i]$. On the other hand, since $\lim_{i\to \infty} \hat v^i=\hat v$, we see that for $i$ large enough, $ \hat v^i\cdot e_n \le  3  \hat v \cdot e_n /4  $. Hence,
\begin{displaymath}
  \begin{split}
  A(s) 
\le  s  \hat v \cdot e_n \left( \frac 3 4 \left(1- \frac {s}{ t_i}\right)^2  -\frac 3 2 \frac {s}{ t_i}   + 3  \frac {s^2} {t_i^2} - \frac {s^3} {t_i^3}   \right)   
= s  \hat v \cdot e_n \left( \frac 3 4 -3 \frac {s}{ t_i} + \frac{15}{4} \frac {s^2} {t_i^2} - \frac {s^3} {t_i^3}   \right).
  \end{split}
\end{displaymath}
It is easy to check that the function $\theta\mapsto \frac 3 4 -3 \theta + \frac{15}{4} \theta^2  - \theta^3$ is positive for $\theta\in [0,1]$, which implies that $A(s)$ is negative for $s\in [0,t_i]$.
\\
 Hence, for $i$ large enough,  $\hat \xi ^i(s)\cdot e_n \le 0$  for  all $0\le s\le t_i \le \overline t$.\\
On the other hand, since $\lim_{i\to +\infty} \left(|\hat x^i-\hat x|+ |\hat v^i -v|\right)=0$, $\hat \xi$ and $\hat \eta$ are continuous, (\ref{eq:22}) implies that   there exists $I>0$ and $\tilde t \in (0,\overline t\,] $
 such that, if $i\ge I$ and $t_i\in (0, \tilde t)$, then $\hat \xi ^i (s)\in \overline{B_-(0,R_x)}$   for all $s\in [0,t_i]$. Hence, for $i\ge I$ and 
 $t_i\in(0, \tilde t)$, $(\xi ^i,\eta^i)\in \Gamma[x^i, v^i]$.

\medskip

Let us now turn to $\left \|\frac {d\eta^i}{dt}\right\|_{L^2(0,t_i)}$:  straightforward calculus shows that
\begin{displaymath}
 \frac {d\eta^i}{dt}(t)= D\Psi_x(\hat \xi ^i (t)) \frac{d\hat \eta^i}{dt}(t) + \left(D^2\Psi_x(\hat \xi ^i (t))
 \hat \eta^i(t) \right) \hat \eta^i(t).    
\end{displaymath}
This implies that 
\begin{equation}
  \label{eq:23}
  \left\|\frac {d\eta^i}{dt}\right\|^2 _{L^2(0,t_i)} \le   C  \left( \left\|\frac {d\hat\eta^i}{dt}\right\|^2 _{L^2(0,t_i)} + \left\|\hat\eta^i\right\|^4 _{L^4(0,t_i)}  \right  ),
\end{equation}
for a constant $C$ independent of $x\in \partial \Omega$. Hereafter, $C$ may vary from line to line.
\\
First, we focus on $\left\|\frac {d\hat\eta^i}{dt}\right\|^2 _{L^2(0,t_i)}$:
\begin{displaymath}
  \begin{split}
 & \left\|\frac {d\hat \eta^i}{dt}\right\|^2 _{L^2(0,t_i)}
\\= &\int_0^{t_i} \left| 2 \left( 3\frac{ \hat \xi(t_i)-\hat v^it_i -\hat x ^i}{t_i^2} -
\frac {\hat\eta(t_i)-\hat v^i}{t_i} \right)  
+\frac {6s}{t_i} \left( -2\frac{ \hat \xi(t_i)-\hat v^it_i -\hat x ^i}{t_i^2} +
\frac {\hat \eta(t_i)-\hat v^i}{t_i} \right) 
   \right|^2  ds     
\\
\le & 2I_1 +2I_2,
  \end{split}
\end{displaymath}
where 
\begin{displaymath}
  I_1= \int_0^{t_i} \left| 2 \left( 3\frac{ \hat \xi(t_i)-\hat vt_i -\hat x }{t_i^2} -
\frac {\hat\eta(t_i)-\hat v}{t_i} \right)  
+\frac {6s}{t_i} \left( -2\frac{ \hat \xi(t_i)-\hat v t_i -\hat x}{t_i^2} +
\frac {\hat\eta(t_i)-\hat v}{t_i} \right) 
   \right|^2  ds    
\end{displaymath}
and 
\begin{displaymath}
  I_2= \int_0^{t_i} \left| 2 \left( 3\frac{ \hat x-\hat x^i }{t_i^2}+ 2
\frac {\hat v-\hat v^i}{t_i} \right)  
+\frac {6s}{t_i} \left( -2\frac{ \hat x-\hat x^i}{t_i^2} -
\frac {\hat v-\hat v^i}{t_i} \right) 
   \right|^2  ds    .
\end{displaymath}
 Standard arguments yield that
 \begin{displaymath}
   I_1\le C \left \|\frac {d\hat\eta}{dt}\right\|^2_{L^2(0,t_i)},
 \end{displaymath}
for an absolute constant $C>0$. Therefore, given $\epsilon>0$, there exists $\hat t:\;  0< \hat t < \overline t$ such that  $2I_1 <\epsilon/2$ for all $t_i<\hat t$.
\\
On the other hand,
 \begin{displaymath}
   I_2\le C_1   \left( \frac {|\hat x-\hat x^i|^2 }{t_i^3} + \frac {|\hat v-\hat v^i|^2 }{t_i}  \right) 
\le C   \left( \frac {|x- x^i|^2 }{t_i^3} + \frac {| v- v^i|^2 }{t_i}  \right).
 \end{displaymath}
 It is possible to choose the sequence $t_i$ such that 
\begin{itemize}
\item $\lim_{i\to \infty} t_i=0$,
\item $\lim_{i\to \infty} \frac {|x-x^i|^2 }{t_i^3} + \frac {|v-v^i|^2 }{t_i}=0$,
\item $(\xi ^i,\eta^i)\in \Gamma[x^i, v^i]$ for $i$ large enough.
\end{itemize}
Such a choice of $t_i$ yields that $\lim_{i\to \infty} \left\|\frac {d\hat\eta^i}{dt}\right\|_{L^2(0,t_i)}^2=0$.
On the other hand, the choice made on $t_i$ also implies that  $\lim_{i\to \infty} \frac {|x-x^i| }{t_i}=0$, and in turn that $\left\|\hat\eta^i\right\| _{L^\infty(0,t_i)}$ is uniformly bounded with respect to $i$; therefore,  the quantity $ \left\|\hat\eta^i\right\|^4 _{L^4(0,t_i)} $ tends to $0$ as $i$ tends to $\infty$; using (\ref{eq:23}), we have proved that $\lim_{i\to \infty}\left \|\frac {d\eta^i}{dt}\right\|_{L^2(0,t_i)}=0$.
\\
  Therefore, it is possible to choose a sequence $t_i>0$ such that the trajectories $(\xi^i, \eta^i)$ are admissible for $i$ large enough and 
$
\ds \lim _{i\to \infty} \left\|\frac {d\eta^i}{dt} -\frac {d\eta}{dt} \right\| _{L^2(0,T)}=0  
$.
This achieves the proof  in case 2.
\item 
 \begin{enumerate}
 \item $x\in \partial \Omega$, $v\cdot n(x) = 0$ and  $v^i\cdot \nabla d (x ^i)\le 0$ at least for $i$ large enough. We may assume that  $v^i\cdot \nabla d (x ^i)\le 0$ for all $i$. Using the same notation as in case 2,
we see from (\ref{eq:8}) that   $\hat v^i \cdot e_n=  v^i \cdot D\Phi_x^T (x^i) e_n= v^i\cdot \nabla d (x ^i) \le 0$.
\\
 In the present case, the approximate trajectories will have three successive phases, see Remarks~\ref{item:1} and \ref{item:2} for explanations on these different phases.
\\
Given $t_{i,1}\in (0,\hat T)$, we define $(\hat y^i, \hat w^i)$ as follows:
 \begin{equation}
   \label{eq:24}
   \begin{array}[c]{rcl}
   \hat y^i&=& \hat x_n  e_n +    \pi_{e_n^\perp} \left(\hat x^i+ \hat v^i t_{i,1}\right), \\     
   \hat w^i&=& \hat v_n e_n +  \pi_{e_n^\perp} \left( \hat v^i\right)= \pi_{e_n^\perp} \left( \hat v^i\right),     
   \end{array}
 \end{equation}
where $\pi_{e_n^\perp}$ stands for the orthogonal projector on $e_n^\perp=\R^{n-1}\times\{0\}$,
and set 
 \begin{displaymath}
  \hat \xi^i (s)=       Q_{t_{i,1},\hat x^i, \hat v^i , \hat y^i,\hat w^i}(s) \quad \hbox{and }\quad \hat \eta^i(s)= \frac {d  \hat \xi^i}   {ds}(s)\quad \quad \hbox{for }\quad 0\le s\le t_{i,1}.
  \end{displaymath}

\begin{remark}\label{item:1}
In this first phase of the approximate trajectory, i.e. for $s\in [0,t_{i,1}]$, 
 $ \pi_{e_n^\perp }\left(Q_{t_{i,1},\hat x^i, \hat v^i , \hat y^i,\hat w^i}''(s)\right) =0$. The effort only lies in driving the $n^{\rm{th}}$-components of $\hat \xi^i(s)$ and $\hat \eta^i(s)$  so that they match those of $\hat x$ and $\hat v$ at $s=t_{i,1}$. 
  \end{remark}
As above, we first check that for $t_{i,1}$ small enough and $i$ large enough,
$ \hat \xi^i(s)\in\overline{B_-(0,R_x)}$
for all $s\in [0,t_{i,1}]$:
from the definition of $Q_{t_{i,1},\hat x^i, \hat v^i , \hat y^i,\hat w^i}$,
we see that
\begin{equation}
  \label{eq:25}
   \hat \xi^i(s)\cdot e_n= \left(  \hat x^i\cdot e_n \left(1+2 \frac {s} {t_{i,1}} \right)  + s\hat v^i\cdot e_n   \right)
  \left(1-\frac {s} {t_{i,1}}\right)^2
\end{equation}
is nonpositive for $s\in [0,t_{i,1}]$. On the other hand,  we see that 
there exist $I>0$ and $ 0<\tilde t \le \hat T $ such that 
if $i\ge I$ and $0<t_{i,1}< \tilde t$, then for all $s\in [0,t_{i,1} ]$,  $\hat \xi^i(s)\in \overline {B_-(0, R_x)}$.
\\
As in case 2., we need to focus on $\left \|\frac {d\hat \eta^i}{dt}\right\|_{L^2(0,t_{i,1})}$:  straightforward calculus shows that
\begin{displaymath}
  \begin{split}
 \left\|\frac {d\hat \eta^i}{dt}\right\|^2 _{L^2(0,t_{i,1})} 
\le C \left( \frac {  |\hat x^i\cdot e_n|^2 }{t_{i,1}^3} + \frac {|(\hat v-\hat v^i)\cdot e_n|^2 }{t_{i,1}}  \right) 
=  C \left( \frac {  d^2(x^i) }{t_{i,1}^3} + \frac {| v^i\cdot \nabla d (x^i)|^2 }{t_{i,1}}  \right),
  \end{split}
 \end{displaymath}
and we see as above that there exists a sequence $t_{i,1}$ such that 
\begin{itemize}
\item $\lim_{i\to \infty} t_{i,1}=0$,
\item $\lim_{i\to \infty}  \frac {  d^2(x^i) }{t_{i,1}^3} + \frac {| v^i\cdot \nabla d (x^i)|^2 }{t_{i,1}}
=0$,
\item  $  \hat \xi^i(s)\in \overline {B_-(0,R_x)}$ for all $0\le s\le t_{i,1}$. 
\end{itemize}
Taking the derivative of $\hat \xi ^i $ and arguing as in case 2., we also see that $\lim_{i\to \infty} \|\hat \eta^i\|_{L^4(0,t_{i,1})}=0$, because $\lim_{i\to \infty} \frac {d(x^i)}{t_{i,1}}=0$.

\medskip

Next, for $t_{i,1}<t_{i,2}<\hat T$, we set
\begin{equation}
  \label{eq:26}
  \hat \xi^i(s)=\left\{
    \begin{array}[c]{rcl}
       Q_{t_{i,1},\hat x^i, \hat v^i , \hat y^i,\hat w^i}(s) ,\quad & \hbox{ if } &\quad s\le t_{i,1},\\
      Q_{t_{i,2}-t_{i,1}, \hat y^i - \hat x, \hat w^i-\hat v, 0,0 }(s-t_{i,1})+ \hat \xi(s-t_{i,1}), \quad &\hbox{ if }& t_{i,1}\le s\le t_{i,2},
    \end{array}
\right.
\end{equation}
and
\begin{equation}
\label{eq:27}
  \xi^i(s)=\left\{
    \begin{array}[c]{rcl}
\Psi_x\left( \hat \xi^i (s) \right) ,\quad & \hbox{ if } &\quad s\le t_{i,2},\\
\xi(s-t_{i,1}),\quad & \hbox{ if } &\quad  t_{i,2}\le s\le T.
\end{array}\right.
\end{equation}
As above $\hat \eta^i (s) = \frac {d \hat \xi ^i}{dt}(s)$ for $0\le s\le t_{i,2}$ and 
 $\eta^i (s) = \frac {d \xi ^i}{dt}(s)$ for $0\le s\le T$.
\begin{remark}\label{item:2}
 In the second phase of the approximate trajectory, i.e. for $s\in [t_{i,1}, t_{i,2}]$,
the components  of $\hat \xi^i(s)$  and  $\hat \xi(s-t_{i,1})$ parallel to $e_n$  coincide, 
i.e. $ Q_{t_{i,2}-t_{i,1}, \hat y^i - \hat x, \hat w^i-\hat v, 0,0 }(s-t_{i,1})\cdot e_n =0$.
 The effort only consists of driving the 
 projections of $\hat \xi^i(s)$ and $\hat \eta^i(s)$ on $e_n^\perp$ 
 such that they match those of $\hat \xi(s-t_{i,1})$ and $\hat \eta(s-t_{i,1})$ at $s=t_{i,2}$.
 We will see that is not  necessary to have 
$t_{i,2}$ tend to zero, because from the choice of $t_{i,1}$, the distance between $(\hat \xi^i(t_{i,1}),\hat \eta^i(t_{i,1}) )$ and $(\hat x,\hat v)$ tends to $0$ as $i\to +\infty$.  
\end{remark}

It is possible to choose the sequence $t_{i,2}$ bounded from below by a positive constant which depends on $(x,v)$ but not on  $i$  such that $\hat \xi ^ i (s) $ stays in $\overline{B_-(0,R_x)}$ for $s\in  [t_{i,1}, t_{i,2}]$.
Hence, $(\xi^i,\eta^i)\in \Gamma[x^i, v^i]$. 
\\
Moreover, since $t_{i,2}$ is bounded away from $0$ and 
$\lim_{i\to \infty} \left( |\hat y^i -\hat x|  +  | \hat w^i -\hat v| \right)=0$,
it is not difficult to check that 
$\ds
  \lim_{i\to \infty}  \left\|\frac {d\eta^i}{dt} -\frac {d\eta}{dt} \right\| _{L^2(0,T)}=0 
$;
this achieves the proof  in  subcase 3.(a).
 \item $x\in \partial \Omega$, $v\cdot n(x) = 0$,  $v^i\cdot \nabla d(x^i) > 0$ for all  $i$ (or for $i$ large enough), and 
(\ref{eq:12}) holds.
\\
The trajectory $\xi^i$  is constructed as in (\ref{eq:26})-(\ref{eq:27}), but  a further restriction on $t_{i,1}$ is needed in order to guarantee that the trajectory is admissible. Using (\ref{eq:25}), we see that the trajectory is admissible if 
\begin{displaymath}
  \frac{\hat v^i\cdot e_n} {|\hat x^i\cdot e_n |}\le \frac 1{s}+\frac 2 {t_{i,1}},\quad \hbox{ for all } 0\le  s \le t_{i,1}.
\end{displaymath}
This happens if and only if
\begin{displaymath}
  t_{i,1} \le    \frac {3|\hat x^i \cdot e^n |} {\hat v^i\cdot e_n} = \frac {3|d( x^i ) |} {v^i\cdot \nabla d(x_i)}  ,
\end{displaymath}
which should be supplemented with the other two conditions as in  3.(a):
 \begin{eqnarray}
    \label{eq:28}
\lim_{i\to \infty} t_{i,1}=0,
\\ \label{eq:29}
\lim_{i\to \infty} \frac {  |d(x^i)|^2 }{t_{i,1}^3} + \frac {|v^i\cdot \nabla d(x ^i)|^2 }{t_{i,1}}=0.
  \end{eqnarray}
If (\ref{eq:12}) holds, then it is possible to choose such a sequence $t_{i,1}$. Then, as in 3.(a),
 it is possible to choose the sequence $t_{i,2}$ bounded from below by a positive constant independent of $i$ such that $(\xi^i,\eta^i)\in \Gamma[x^i, v^i]$; the last part  of the proof is identical as in subcase 3.(a).
\end{enumerate}
\end{enumerate}
\end{proof}

 \begin{lemma}  \label{sec:clos-graph-prop-7}
   Consider  $(x,v)\in\Xi^{\rm{ad}}$ and a sequence $(x^i, v^i)_{i\in \N}$ such that for all $i\in \N$,   $(x^i, v^i)\in\Xi^{\rm{ad}}$  and $(x^i, v^i)\to (x,v)$ as $i\to \infty$. Suppose  that Assumption \ref{sec:setting-notations-0} and one among the three conditions in Lemma \ref{sec:clos-graph-prop-12}
  are satisfied.
Let  a sequence $(\xi^i, \eta^i)_{i\in \N}$ be such that for all $i\in \N$,    $ (\xi^i, \eta^i)\in \Gamma^{\rm {opt}}[x^i, v^i]$. If $(\xi^i, \eta^i)$ tends to $(\xi,\eta)$ uniformly in $[0,T]$, then  $\eta\in W^{1,2} (0,T;\R^n)$ and 
    $(\xi,\eta)\in \Gamma^{\rm opt} [x,v]$.
 \end{lemma}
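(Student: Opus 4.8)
The plan is to prove Lemma~\ref{sec:clos-graph-prop-7} by combining a lower semicontinuity argument (to show that the limit trajectory is no worse than optimal) with Lemma~\ref{sec:clos-graph-prop-12} (to produce competitors showing it cannot be strictly worse). First I would establish the uniform energy bound. Since $(\xi^i,\eta^i)\in\Gamma^{\rm opt}[x^i,v^i]$, optimality gives $J(\xi^i,\eta^i,(\eta^i)')=u(x^i,v^i)$. To bound this from above uniformly in $i$, I apply Lemma~\ref{sec:clos-graph-prop-12} to the point $(x,v)$ and the sequence $(x^i,v^i)$: fix once and for all an admissible $(\bar\xi,\bar\eta)\in\Gamma[x,v]$ with $\bar\eta\in W^{1,2}$ and $J(\bar\xi,\bar\eta,\bar\eta')<\infty$ (such a trajectory exists by the constructions in the proof of Lemma~\ref{sec:contr-probl-conv}); Lemma~\ref{sec:clos-graph-prop-12} then yields $(\tilde\xi^i,\tilde\eta^i)\in\Gamma[x^i,v^i]$ with $\tilde\eta^i\to\bar\eta$ in $W^{1,2}$ and $\tilde\xi^i\to\bar\xi$ in $W^{2,2}$, hence uniformly. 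By continuity of $\ell$ and $g$ and the convergence of $(\tilde\eta^i)'$ in $L^2$, $J(\tilde\xi^i,\tilde\eta^i,(\tilde\eta^i)')\to J(\bar\xi,\bar\eta,\bar\eta')<\infty$, so $u(x^i,v^i)=J(\xi^i,\eta^i,(\eta^i)')\le J(\tilde\xi^i,\tilde\eta^i,(\tilde\eta^i)')\le C$ for all $i$. Because $\ell$ and $g$ are bounded below by $-M$, this forces $\tfrac12\int_0^T|(\eta^i)'|^2\,ds\le C+M(T+1)$, i.e. $\|(\eta^i)'\|_{L^2(0,T)}$ is bounded uniformly in $i$.

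Next I would pass to the limit. The uniform $L^2$-bound on $(\eta^i)'$, together with $\eta^i(0)=v^i\to v$, gives (up to a subsequence) $(\eta^i)'\rightharpoonup \zeta$ weakly in $L^2(0,T;\R^n)$ for some $\zeta$, and then $\eta^i\to\eta$ in $C^0$ forces $\eta(s)=v+\int_0^s\zeta$, so $\eta\in W^{1,2}(0,T;\R^n)$ with $\eta'=\zeta$. Since $\xi^i\to\xi$ uniformly and $\xi'=\eta$ with $(\xi(s),\eta(s))\in\Xi$ for all $s$ (closedness of $\Xi$), we get $(\xi,\eta)\in\Gamma[x,v]$. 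For the cost: by uniform convergence of $(\xi^i,\eta^i)$ and continuity of $\ell$, $\int_0^T\ell(\xi^i,\eta^i,s)\,ds\to\int_0^T\ell(\xi,\eta,s)\,ds$ and $g(\xi^i(T),\eta^i(T))\to g(\xi(T),\eta(T))$; by weak lower semicontinuity of the $L^2$-norm, $\int_0^T|\eta'|^2\le\liminf\int_0^T|(\eta^i)'|^2$. Hence $J(\xi,\eta,\eta')\le\liminf_i J(\xi^i,\eta^i,(\eta^i)')=\liminf_i u(x^i,v^i)$.

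Finally I would close the loop. By Lemma~\ref{sec:contr-probl-conv}, $u$ is lower semicontinuous on $\Xi^{\rm ad}$, so $u(x,v)\le\liminf_i u(x^i,v^i)$; but we also need the reverse. Here I would instead argue directly: $u(x,v)=\inf_{\Gamma[x,v]}J\le J(\xi,\eta,\eta')\le\liminf_i u(x^i,v^i)$. On the other hand, applying Lemma~\ref{sec:clos-graph-prop-12} now to an \emph{arbitrary} near-optimal $(\hat\xi,\hat\eta)\in\Gamma[x,v]$ with $\hat\eta\in W^{1,2}$ and $J(\hat\xi,\hat\eta,\hat\eta')\le u(x,v)+\varepsilon$, we obtain approximating $(\hat\xi^i,\hat\eta^i)\in\Gamma[x^i,v^i]$ with $J(\hat\xi^i,\hat\eta^i,(\hat\eta^i)')\to J(\hat\xi,\hat\eta,\hat\eta')$, whence $\limsup_i u(x^i,v^i)\le\limsup_i J(\hat\xi^i,\hat\eta^i,(\hat\eta^i)')= J(\hat\xi,\hat\eta,\hat\eta')\le u(x,v)+\varepsilon$; letting $\varepsilon\to0$ gives $\limsup_i u(x^i,v^i)\le u(x,v)$. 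Combining, $\lim_i u(x^i,v^i)=u(x,v)$ and $J(\xi,\eta,\eta')\le u(x,v)$, so $J(\xi,\eta,\eta')=u(x,v)$ and $(\xi,\eta)\in\Gamma^{\rm opt}[x,v]$. A standard subsequence argument removes the passage to a subsequence. The main obstacle is the very first step — getting the uniform energy bound — which is exactly where the hypotheses of Lemma~\ref{sec:clos-graph-prop-12} (in particular condition~\eqref{eq:12}) are indispensable, since without them the costs $u(x^i,v^i)$ may blow up and no such bound exists.
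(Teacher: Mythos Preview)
Your proof is correct and follows essentially the same approach as the paper: both rely on Lemma~\ref{sec:clos-graph-prop-12} to produce admissible competitors at $(x^i,v^i)$, use optimality of $(\xi^i,\eta^i)$ to bound $\|(\eta^i)'\|_{L^2}$, and conclude by weak lower semicontinuity of the $L^2$-norm combined with uniform convergence in the $\ell$ and $g$ terms. The only difference is organizational: the paper argues in one pass, fixing an \emph{arbitrary} competitor $(\widetilde\xi,\widetilde\eta)\in\Gamma[x,v]$ from the start and showing directly that $J(\xi,\eta,\eta')\le J(\widetilde\xi,\widetilde\eta,\widetilde\eta')$, whereas you split this into two stages (a fixed competitor for the energy bound, then near-optimal competitors to pin down $\limsup_i u(x^i,v^i)$), obtaining the convergence $u(x^i,v^i)\to u(x,v)$ as an extra byproduct that the paper does not state here.
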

 \begin{proof}
   We  need to prove that for any $(\widetilde \xi, \widetilde \eta)\in\Gamma[x,v]$ such that $ \widetilde\eta\in W^{1,2} (0,T;\R^n)$,
     \begin{equation}
     \label{eq:30}
     J( \xi,  \eta,  \eta')\le J(\widetilde \xi, \widetilde \eta, \widetilde \eta ') .
   \end{equation}
   From Lemma \ref{sec:clos-graph-prop-12} applied to $ (\widetilde \xi, \widetilde \eta)$, there exists a sequence $ (\widetilde \xi^i, \widetilde \eta^i)_{i\in \N}$,  with $ (\widetilde \xi^i, \widetilde \eta^i)\in  \Gamma[x^i, v^i]$ such that
   $ (\widetilde \xi^i, \widetilde \eta^i)\to  (\widetilde \xi, \widetilde \eta)$ uniformly on $[0,T]$ as $i\to \infty$, and
   \begin{displaymath}
   \ds \lim_{i\to \infty} \int _0^T \left |\frac {d  \widetilde \eta^i} {dt}(s)\right|^2 ds=
   \int _0^T \left |\frac {d  \widetilde \eta} {dt} (s)\right|^2 ds.     
   \end{displaymath}
   On the other hand, the optimality of $ (\xi^i, \eta^i)$ yields that
   \begin{equation}
     \label{eq:31}
     J\left (\xi^i, \eta^i  ,    \frac {d  \eta^i} {dt}   \right)\leq  J\left (\widetilde \xi^i, \widetilde \eta^i  ,
       \frac {d  \widetilde \eta^i} {dt}   \right).
   \end{equation}
   From the properties of  $ (\widetilde \xi^i, \widetilde \eta^i)$, the right hand side of  \eqref{eq:31} converges to
   $ \ds J\left (\widetilde \xi, \widetilde \eta  ,  \frac {d  \widetilde \eta} {dt}   \right)$.
   The left side  \eqref{eq:31} is thus bounded. Combining this fact with the uniform convergence of
   $(\xi^i, \eta^i)$ to $(\xi,\eta)$  in $[0,T]$, we obtain that the sequence   $ \int _0^T \left |\frac {d   \eta ^i} {dt} (s)\right|^2 ds$ is bounded. This implies that $ \frac {d   \eta ^i} {dt} \rightharpoonup  \frac {d   \eta } {dt}$ in $L^2(0,T;\R^n)$ weakly  and that $\ds \liminf_{i\to \infty} \int _0^T \left |\frac {d   \eta ^i} {dt}(s) \right|^2 ds
   \ge \int _0^T \left |\frac {d   \eta } {dt}(s)\right|^2 ds$. We deduce that
   \begin{displaymath}
      J\left (\xi, \eta  ,    \frac {d  \eta} {dt}   \right)\le \liminf_{i\to \infty}  J\left (\xi^i, \eta^i  ,    \frac {d  \eta^i} {dt}   \right).
   \end{displaymath}
 Combining the information obtained above, we obtain   \eqref{eq:30},     which achieves the proof.
 \end{proof}

  \begin{proof} [Proof of Proposition~\ref{sec:contr-probl-polyg-1}]
   Consider  $(y,w)\in\Theta$ and a sequence $(y^i, w^i)_{i\in \N}$ such that for all $i\in \N$,   $(y^i, w^i)\in\Theta$ and $(y^i, w^i)\to (y,w)$ as $i\to \infty$.
   Consider a sequence     $(\xi^i,\eta^i)_{i\in \N}$ such that for all $i\in \N$, $(\xi^i, \eta^i) \in
 \Gamma^{\rm{opt}}[y^i, w^i]$ and that  $(\xi^i,\eta^i)$ tends to $(\xi,\eta)$
   uniformly. 
   Thanks to the assumption made in the statement of  Proposition~\ref{sec:contr-probl-polyg-1},
   possibly after the extraction of a subsequence, we may suppose that one among the three conditions  
   in Lemma~\ref{sec:clos-graph-prop-12} holds.
   Then the conclusion follows from Lemma  \ref{sec:clos-graph-prop-7}.
 \end{proof}
 \subsection{Bounds related to optimal trajectories}
 \label{sec:bounds1}
 \begin{definition}
   \label{sec:state-constr-optim}
 For a  positive number $C$,  let us set
\begin{eqnarray}
  \label{eq:33}
  K_C&=& \{  (x,v)\in  \Xi: \; |v|\le C\},   
\\
\label{eq:34}
  \Gamma_C &=& \left\{  (\xi,\eta)\in \Gamma: \left|
      \begin{array}[c]{l}
        (\xi(t), \eta(t))\in K_C, \quad \forall t\in [0,T],\\
        \left \| \frac {d\eta}{dt} \right\|_{L^2(0,T;\R^n)}\le C.
      \end{array}
\right.  \right\}.
\end{eqnarray}
 \end{definition}

 \begin{proposition}\label{sec:contr-probl-conv-1}
 Given $r>0$,  let us define 
\begin{equation}
  \label{eq:35}
\Theta_r= \Theta \cap K_r,
\end{equation}
where $K_r$ is defined by (\ref{eq:33}) and $\Theta$ is a closed subset of $\Xi^{\rm ad}$ which satisfies the assumption in Proposition \ref{sec:contr-probl-polyg-1}.
\\
 Under Assumption~\ref{sec:setting-notations-0}, the value function $u$ defined in (\ref{eq:7})
 is continuous  on $ \Theta_r$.\\
There exists a positive number
 $C= C(r, M)  $  such that
if  $(x,v)\in \Theta_r$, then $\Gamma^{\rm opt} [x,v]\subset  \Gamma_C$.
 \end{proposition}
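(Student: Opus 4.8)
\emph{Continuity of $u$.} Lower semicontinuity on $\Theta_r$ is already contained in Lemma~\ref{sec:contr-probl-conv}, so the plan is to establish upper semicontinuity. I would fix $(x,v)\in\Theta_r$ and a sequence $(x^i,v^i)_{i\in\N}$ in $\Theta_r$ with $(x^i,v^i)\to(x,v)$ (note $(x,v)\in\Theta_r$ since $\Theta$ is closed and $|v|\le r$), and prove $\limsup_i u(x^i,v^i)\le u(x,v)$. Since this is an upper bound on a $\limsup$, it suffices to argue along a subsequence; and after one or two extractions, one of the three conditions of Lemma~\ref{sec:clos-graph-prop-12} holds for $(x,v)$ and $(x^i,v^i)$: if $x\in\Omega$ we are in case~1, if $x\in\partial\Omega$ with $v\cdot n(x)<0$ in case~2, and if $x\in\partial\Omega$ with $v\cdot n(x)=0$ we extract according to the sign of $v^i\cdot\nabla d(x^i)$ --- either $v^i\cdot\nabla d(x^i)\le0$ along a subsequence (case~3(a)), or $v^i\cdot\nabla d(x^i)>0$ for $i$ large, which forces $x^i\in\Omega$ and $d(x^i)<0$, and then the hypothesis \eqref{eq:9} imposed on $\Theta$ in Proposition~\ref{sec:contr-probl-polyg-1} reduces precisely to \eqref{eq:12}, i.e.\ case~3(b). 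Then, given $\varepsilon>0$, I would pick a nearly optimal $(\widetilde\xi,\widetilde\eta)\in\Gamma[x,v]$ with $J(\widetilde\xi,\widetilde\eta,\widetilde\eta')\le u(x,v)+\varepsilon$ (finiteness of the cost forces $\widetilde\eta\in W^{1,2}(0,T;\R^n)$), invoke Lemma~\ref{sec:clos-graph-prop-12} to get $(\widetilde\xi^i,\widetilde\eta^i)\in\Gamma[x^i,v^i]$ with $\widetilde\eta^i\in W^{1,2}$ converging to $(\widetilde\xi,\widetilde\eta)$ in $W^{2,2}(0,T;\R^n)\times W^{1,2}(0,T;\R^n)$ and hence uniformly, and pass to the limit in the three terms of $J$ (continuity of $\ell$ and $g$ together with uniform convergence, all trajectories eventually lying in a common compact set, for the running and terminal costs; $L^2$-convergence of $(\widetilde\eta^i)'$ to $\widetilde\eta'$ for the quadratic term). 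This gives $J(\widetilde\xi^i,\widetilde\eta^i,(\widetilde\eta^i)')\to J(\widetilde\xi,\widetilde\eta,\widetilde\eta')$, and since $u(x^i,v^i)\le J(\widetilde\xi^i,\widetilde\eta^i,(\widetilde\eta^i)')$ we obtain $\limsup_i u(x^i,v^i)\le u(x,v)+\varepsilon$; letting $\varepsilon\to0$ and combining with lower semicontinuity yields continuity of $u$ on $\Theta_r$.

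\emph{Uniform bound on optimal trajectories.} The set $\Theta_r=\Theta\cap K_r$ is compact, being the intersection of the closed set $\Theta$ with the compact set $K_r=\overline\Omega\times\overline{B(0,r)}$. Hence the (now continuous) $u$ attains a finite maximum $\bar u:=\max_{\Theta_r}u$, depending only on $r$ and on the fixed data. I would then fix $(x,v)\in\Theta_r$ and $(\xi,\eta)\in\Gamma^{\rm opt}[x,v]$ --- such minimizers exist by the direct method of the calculus of variations: a minimizing sequence is bounded in $W^{2,2}(0,T;\R^n)\times W^{1,2}(0,T;\R^n)$ thanks to the coercivity of the quadratic term and the fixed initial data, its weak limit stays in the closed set $\Xi$, and $J$ is sequentially weakly lower semicontinuous. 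From $J(\xi,\eta,\eta')=u(x,v)$, the lower bounds $\ell\ge-\|\ell_-\|_{L^\infty}$ and $g\ge-\|g_-\|_{L^\infty}$, and the definition \eqref{eq:3} of $M$,
\begin{displaymath}
  \frac12\int_0^T|\eta'(s)|^2\,ds\le u(x,v)+T\|\ell_-\|_{L^\infty}+\|g_-\|_{L^\infty}\le\bar u+\max(T,1)\,M ,
\end{displaymath}
so $\|\eta'\|_{L^2(0,T;\R^n)}\le C_1$ with $C_1=C_1(r,M)$; then for every $t\in[0,T]$,
\begin{displaymath}
  |\eta(t)|\le|v|+\int_0^t|\eta'(s)|\,ds\le r+\sqrt T\,\|\eta'\|_{L^2(0,T;\R^n)}\le r+\sqrt T\,C_1=:C_2 .
\end{displaymath}
Since $\xi(t)\in\overline\Omega$ for every $t$, setting $C:=\max(C_1,C_2)$ gives $(\xi(t),\eta(t))\in K_C$ for all $t$ and $\|\eta'\|_{L^2}\le C$, that is $(\xi,\eta)\in\Gamma_C$; as $(\xi,\eta)\in\Gamma^{\rm opt}[x,v]$ was arbitrary, $\Gamma^{\rm opt}[x,v]\subset\Gamma_C$ with $C=C(r,M)$.

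\emph{Main difficulty.} The crux is the upper-semicontinuity step, and precisely the observation that the constraint \eqref{eq:9} imposed on $\Theta$ in Proposition~\ref{sec:contr-probl-polyg-1} is exactly what is needed to place (a subsequence of) $(x^i,v^i)$ into one of the three regimes of Lemma~\ref{sec:clos-graph-prop-12}; once that case distinction is set up, the explicit construction of competitor trajectories with controlled acceleration near a tangential boundary point is entirely supplied by that lemma, and the remaining passage to the limit in $J$ and the subsequence bookkeeping are routine. The uniform bound is then elementary, relying only on the boundedness of $u$ on the compact set $\Theta_r$ and on the coercivity of the running cost with respect to the acceleration.
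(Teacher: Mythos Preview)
Your proof is correct and follows essentially the same approach as the paper: upper semicontinuity via Lemma~\ref{sec:clos-graph-prop-12} after a subsequence extraction placing $(x^i,v^i)$ into one of its three cases, combined with the lower semicontinuity of Lemma~\ref{sec:contr-probl-conv}, then compactness of $\Theta_r$ plus coercivity for the uniform bound. The only cosmetic difference is that the paper works with an actual minimizer $(\xi,\eta)\in\Gamma^{\rm opt}[x,v]$ rather than an $\varepsilon$-optimal competitor, but this changes nothing in the argument.
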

 \begin{remark}
   \label{sec:bounds-relat-optim}
The set of trajectories $\Gamma_C$ is a compact subset of $\Gamma$. In the context of mean field games, see Section \ref{sec:MFG-contr-accel},
the existence of relaxed equilibria will be obtained by applying Kakutani's fixed point theorem to a multivalued map defined on a closed set of probability measures on  $\Gamma_C$.
 \end{remark}

 \begin{proof}
   Take $(x,v)\in \Theta_r$ and a sequence $(x^i,v^i)_{i\in \N}$, $ (x^i,v^i)\in  \Theta_r$ such that 
$\lim_{i\to \infty}   (x^i,v^i)= (x, v)$.
\\
From  Lemma \ref{sec:contr-probl-conv} we know that $u(x,v)$ is finite and from  Assumption \ref{sec:setting-notations-0}, that the infimum in (\ref{eq:7}) is achieved by a trajectory $(\xi,\eta)\in \Gamma^{\rm{opt}}[x,v] $. \\
 Possibly after the extraction of a subsequence, we may assume that $ (x^i,v^i)$ satisfies one among the three points in Lemma~\ref{sec:clos-graph-prop-12}. Then,  there exists a sequence  $(\xi^i,\eta^i)_{i\in \N}$ such that for all $i\in \N$,  $(\xi^i,\eta^i)\in \Gamma [x^i,v^i]$, $\eta^i\in W^{1,2} (0,T;\R^n)$, and  $(\xi^i,\eta^i)$ tends to $(\xi,\eta)$ 
  in $W^{2,2}(0,T;\R^n) \times  W^{1,2}(0,T;\R^n)$, hence uniformly in $[0,T]$. Hence,
  \begin{displaymath}
    \lim_{i\to \infty} J\left(\xi^i,\eta^i , \frac {d\eta^i}{dt}\right)= u(x,v).
  \end{displaymath}
On the other hand,
\begin{displaymath}
J\left(\xi^i,\eta^i , \frac {d\eta^i}{dt}\right) \ge  u(x^i,v^i).  
\end{displaymath}
The latter two observations yield that 
\begin{displaymath}
  \limsup_{i\to \infty} u(x^i,v^i) \le u(x,v).
\end{displaymath}
This proves that $u$ is upper-semi-continuous on $\Theta_r$. From Lemma~\ref{sec:contr-probl-conv}, $u$ is
 continuous on $\Theta_r$. Since $\Theta_r$ is a compact subset of $\R^n\times \R^n$,  $u$ is bounded  on $\Theta_r$.\\
Then from the definition of $J$ and $u$ and the boundedness of $u$ on 
 $\Theta_r$, it is clear that there exists a constant $C=C(r,M)$ such that 
 $\Gamma^{\rm opt}[x,v]\subset \Gamma_C$ for any $(x,v)\in \Theta_r$. 
 \end{proof}

 \section{A mean field game with control on the acceleration  and state constraints}
 \label{sec:MFG-contr-accel}
 \subsection{Setting and notation}
 \label{sec:MFG-contr-accel1}
The bounded domain $\Omega$ of $\R^n$ and the sets $\Xi$ and $\Xi^{\rm ad}$ have been 
introduced in Section~\ref{sec:setting-notation-8}.
 Let $\cP(\Xi)$ be the set of probability measures on $\Xi$.
\\

Let $ C_b^0(\Xi;\R)$ denote the space of bounded and continuous real valued functions defined on  $\Xi$ and 
let $F, G: \cP(\Xi)\to C_b^0(\Xi;\R)$ be  bounded and continuous maps 
(the continuity is  with respect to the narrow convergence  in $ \cP(\Xi)$). 
Let $\ell $ be a real valued, continuous and bounded from below function defined on $\Xi \times [0,T]$. 

Let $F[m]$ and $G[m]$ denote the images by $F$ and $G$  of $m\in \cP(\Xi)$.    
 Set
 \begin{equation}
   \label{eq:37}
M=\max\left( \sup_{(x,v,s)\in \Xi\times[0,T]} \ell_-(x,v,s)+  \sup_{m\in \cP(\Xi)} 
\|F[m]\|_{L^\infty(\Xi)} , \sup_{m\in \cP(\Xi)} \|G[m]\|_{L^\infty(\Xi)}\right) .  
 \end{equation}


Let $\Gamma$ be the set of admissible trajectories given by~(\ref{eq:4}).
It is a metric space with the distance
 $d(  (\xi, \eta), (\tilde\xi, \tilde\eta))= \|\xi-\tilde\xi\|_{  C^1([0,T];\R^n)}$. 
Let $\cP(\Gamma)$ be the set of probability measures on $\Gamma$.\\
For $t\in [0,T]$, the evaluation map $e_t:\Gamma\to \Xi$ is defined by $e_t(\xi, \eta)=  (\xi(t), \eta(t)) $ for all $  (\xi, \eta)  \in \Gamma$.\\
 For any $\mu\in \cP(\Gamma)$, let the Borel probability measure $m^\mu(t)$   on $ \Xi$ be defined by $m^\mu(t)=e_t\sharp \mu$. 
It is possible to prove that if $\mu\in \cP(\Gamma)$, then  $t\mapsto m^\mu(t)$ is continuous from $[0,T]$
 to  $\cP(\Xi)$,
for the narrow convergence  in $ \cP(\Xi)$. Hence, for all $(\xi,\eta)\in \Gamma$,
 $t\mapsto F[m^\mu(t)](\xi(t),\eta(t))$ is continuous and bounded by the constant $M$ in (\ref{eq:37}).
\\
With $\mu\in \cP(\Gamma)$, we associate the cost
\begin{equation}
\label{costMFG}
J^\mu( \xi,\eta  )
=\left (
  \begin{split}
&\int_0^T \left(F[m^\mu(s)]   (\xi(s),\eta(s))+   \ell (\xi(s),\eta(s),s) +\frac 1 2   \left|\frac {d\eta}{dt}(s) \right|^2 \right) ds\\ &+  G[ m^\mu(T)](\xi(T), \eta(T))
  \end{split}\right).
\end{equation}

\begin{remark}\label{sec:setting-notation-7}
It is clear from  (\ref{eq:37}) that given $\mu \in  \cP(\Gamma)$,
 the running cost $  (y,w,s)\mapsto F[m^\mu(s)]   (y,w)+   \ell (y,w,s)$ and the final cost $(y,w)\mapsto G[m^\mu(T)]   (y,w)$ satisfy Assumption \ref{sec:setting-notations-0},
and that the constant arising in (\ref{eq:3})  can be chosen uniformly with respect to $\mu\in   \cP(\Gamma)$.
Hence, for all $\mu \in  \cP(\Gamma)$, Propositions \ref{sec:contr-probl-polyg-1} and \ref{sec:contr-probl-conv-1}
hold for the state constrained control problem related to $J^\mu$ and the constants arising in these propositions can be chosen uniformly with respect to $\mu \in  \cP(\Gamma)$.  
\end{remark}

\begin{assumption}\label{sec:setting-notation-9}
There exists  a positive number $r$ such that  the  
initial distribution of states is a probability measure $m_0$ on $\Xi$ supported in $\Theta_r$, where  $\Theta_r$ is a closed subset of $\Xi^{\rm ad}$ as in (\ref{eq:35}).   
\end{assumption}

 Let $C=C(r,M)$ be the constant appearing in Proposition~\ref{sec:contr-probl-conv-1} (uniform w.r.t. $\mu$), and $\Gamma_C$ be the compact subset of $\Gamma$ defined by (\ref{eq:34}); clearly,  $\Gamma_C$ is a Radon metric space.  From Prokhorov theorem, see \cite[Theorem 5.1.3]{MR2129498}, the set $\cP(\Gamma_C)$ is  compact for the narrow convergence of measures.

Let $\cP_{m_0}(\Gamma)$, (resp. $\cP_{m_0}(\Gamma_C)$) denote the set of probability measures $\mu$ on $\Gamma$
(resp. $\Gamma_C$) such that $e_0\sharp \mu=m_0$.

 Hereafter, we identify $\cP(\Gamma_C)$ with a  subset of $\cP(\Gamma)$ by extending $\mu\in \cP(\Gamma_C)$ by $0$ outside $\Gamma_C$.
Similarly, we may consider  $\cP_{m_0}(\Gamma_C)$ as a subset of $\cP_{m_0}(\Gamma)$.

 Note that for all $\mu\in \cP(\Gamma_C)$ and for all $t\in [0,T]$, $m^\mu(t)$ is supported in $K_C$, where $K_C$ is defined in (\ref{eq:33}).
\begin{remark}
  \label{sec:setting-notation}
Note that $\Gamma_C$ (endowed with the metric of the $C^1\times C^0$-convergence of $(\xi, \eta)$)
 is a Polish space (because it is compact). The multivalued map $\widetilde \Gamma^{\rm{opt}}$ related for instance to $F\equiv 0$ and $G\equiv 0$ maps $\Theta_r$ to non empty and closed subsets of $\Gamma_C$ (the closedness can be checked by usual arguments of the calculus of variations). Since the graph of $\widetilde \Gamma^{\rm{opt}}$ is closed, $\widetilde \Gamma^{\rm{opt}}$ is measurable. Therefore, there exists a measurable selection $j: \Theta_r\to \Gamma_C$ from Kuratowski and Ryll-Nardzewski theorem, \cite{MR188994}. Then $j\sharp m_0$ belongs to  $\cP_{m_0}(\Gamma_C)$. The set $\cP_{m_0}(\Gamma_C)$ is not empty.
\end{remark}

\subsection{Existence of a mean field game  equilibrium}
\label{sec:exist-an-equil}

\begin{lemma}\label{sec:setting-notation-4}
  Let a sequence of probability measures $(\mu_i)_{i\in \N}$,  $\mu_i\in\cP(\Gamma)$,
 be narrowly convergent to $\mu\in \cP(\Gamma)$. For all $t\in [0,T]$, $(m^{\mu_i}(t))_{i\in \N}$ is narrowly convergent to $m^{\mu}(t)$.
\end{lemma}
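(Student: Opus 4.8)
The plan is to prove the narrow convergence of $m^{\mu_i}(t)$ to $m^\mu(t)$ directly from the definition $m^\mu(t) = e_t \sharp \mu$ together with the continuity of the evaluation map $e_t : \Gamma \to \Xi$. Recall that narrow convergence of a sequence $(\nu_i)$ of probability measures on a metric space to $\nu$ means that $\int \phi \, d\nu_i \to \int \phi \, d\nu$ for every bounded continuous $\phi$. So fix $t \in [0,T]$ and fix $\phi \in C_b^0(\Xi; \R)$; I must show $\int_\Xi \phi \, dm^{\mu_i}(t) \to \int_\Xi \phi \, dm^\mu(t)$.

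The key step is the change-of-variables (push-forward) formula: for any Borel measurable bounded $\phi$,
\begin{equation*}
\int_\Xi \phi \, d m^{\mu_i}(t) = \int_\Xi \phi \, d(e_t \sharp \mu_i) = \int_\Gamma \phi(e_t(\xi,\eta)) \, d\mu_i(\xi,\eta) = \int_\Gamma (\phi \circ e_t) \, d\mu_i,
\end{equation*}
and likewise with $\mu$ in place of $\mu_i$. Now the point is that $\phi \circ e_t : \Gamma \to \R$ is bounded (since $\phi$ is bounded) and continuous: indeed $e_t$ is continuous from $\Gamma$, equipped with the $C^1([0,T];\R^n) \times C^0([0,T];\R^n)$ metric $d((\xi,\eta),(\tilde\xi,\tilde\eta)) = \|\xi - \tilde\xi\|_{C^1}$, to $\Xi$, because if $d((\xi^k,\eta^k),(\xi,\eta)) \to 0$ then in particular $\xi^k \to \xi$ and $\eta^k = (\xi^k)' \to \xi' = \eta$ uniformly on $[0,T]$, so $(\xi^k(t),\eta^k(t)) \to (\xi(t),\eta(t))$ in $\Xi$; composing with the continuous $\phi$ gives continuity of $\phi \circ e_t$. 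Hence $\phi \circ e_t$ is an admissible test function for the narrow convergence $\mu_i \to \mu$ on $\cP(\Gamma)$, and therefore
\begin{equation*}
\int_\Gamma (\phi \circ e_t) \, d\mu_i \;\longrightarrow\; \int_\Gamma (\phi \circ e_t) \, d\mu .
\end{equation*}
Combining this with the push-forward identities above yields $\int_\Xi \phi \, dm^{\mu_i}(t) \to \int_\Xi \phi \, dm^\mu(t)$.

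Since $\phi \in C_b^0(\Xi;\R)$ was arbitrary, this establishes that $(m^{\mu_i}(t))_{i \in \N}$ converges narrowly to $m^\mu(t)$, for each fixed $t \in [0,T]$, which is exactly the claim. There is no real obstacle here: the only mild point requiring care is to state cleanly why $e_t$ is continuous for the chosen metric on $\Gamma$ (which hinges on the fact that this metric controls $\xi$ in $C^1$, hence controls $\eta = \xi'$ in $C^0$), and the rest is the standard fact that push-forward by a continuous map is sequentially continuous for the narrow topology.
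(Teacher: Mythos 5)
Your proof is correct and follows the same route as the paper's: rewrite $\int_\Xi \phi\, dm^{\mu_i}(t)$ as $\int_\Gamma (\phi\circ e_t)\, d\mu_i$ via the push-forward identity, observe that $\phi\circ e_t$ is bounded and continuous on $\Gamma$, and invoke narrow convergence of $\mu_i$ to $\mu$. The paper states this more tersely; your additional remark justifying the continuity of $e_t$ with respect to the chosen metric on $\Gamma$ is a harmless elaboration of the same argument.
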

\begin{proof}
  For all $f\in C^0_b(\Xi;\R)$,
  \begin{displaymath}
    \begin{array}[c]{rcl}
    \ds \int_{\Xi} f(x,v) dm^{\mu_i}(t)(x,v)= \ds \int_{\Gamma} f(\xi(t),\eta(t)) d\mu_i(\xi,\eta)
&\to&\ds  \int_{\Gamma} f(\xi(t),\eta(t)) d\mu(\xi,\eta)\\
&=& \ds\int_{\Xi} f(x,v) dm^{\mu}(t)(x,v).
    \end{array}
  \end{displaymath}
\end{proof}
An easy consequence of Lemma~\ref{sec:setting-notation-4} is that for $C=C(r,M)$ as in Proposition~\ref{sec:contr-probl-conv-1},  $\cP_{m_0}(\Gamma_C)$ is a closed subset of $\cP(\Gamma_C)$, and is therefore compact.

\begin{lemma}
  \label{sec:setting-notation-2}
If $\mu\in \cP(\Gamma_C)$, the map $t\mapsto m^\mu(t)$ is $1/2$-H{\"o}lder continuous from $[0,T]$ to $\cP(K_C)$,  ($K_C$ is defined in (\ref{eq:33}) and $\cP(K_C)$ is endowed with the  Kantorovitch-Rubinstein distance).
\end{lemma}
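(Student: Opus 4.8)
The plan is to prove the $1/2$-Hölder bound directly from the uniform energy bound built into the definition of $\Gamma_C$. Fix $\mu \in \cP(\Gamma_C)$ and $0 \le s \le t \le T$. Recall that the Kantorovich–Rubinstein distance between $m^\mu(s)$ and $m^\mu(t)$ can be bounded above by any transport plan; since both measures are pushforwards of the same $\mu$ under $e_s$ and $e_t$, the plan $(e_s, e_t)\sharp \mu$ is admissible, and therefore
\[
  d_{KR}\bigl(m^\mu(s), m^\mu(t)\bigr) \le \int_\Gamma \bigl| (\xi(s),\eta(s)) - (\xi(t),\eta(t)) \bigr| \, d\mu(\xi,\eta).
\]
So it suffices to estimate $|\xi(t)-\xi(s)| + |\eta(t)-\eta(s)|$ pointwise on $\Gamma_C$ by $C' |t-s|^{1/2}$ for a constant depending only on $C$ and $T$.

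Next I would establish that pointwise bound. For $(\xi,\eta)\in\Gamma_C$ we have $|\eta|\le C$ on $[0,T]$ and $\|\eta'\|_{L^2(0,T;\R^n)}\le C$. The position increment is easy: $|\xi(t)-\xi(s)| = \bigl|\int_s^t \eta(\tau)\,d\tau\bigr| \le C|t-s| \le C\sqrt{T}\,|t-s|^{1/2}$. For the velocity increment, apply Cauchy–Schwarz to $\eta(t)-\eta(s) = \int_s^t \eta'(\tau)\,d\tau$:
\[
  |\eta(t)-\eta(s)| \le \Bigl(\int_s^t |\eta'(\tau)|^2 \, d\tau\Bigr)^{1/2} |t-s|^{1/2} \le C\,|t-s|^{1/2}.
\]
Adding these gives $|\xi(t)-\xi(s)| + |\eta(t)-\eta(s)| \le C(1+\sqrt T)\,|t-s|^{1/2}$ for every trajectory in $\Gamma_C$, uniformly. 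Integrating against $\mu$ (a probability measure) preserves the bound, which combined with the displayed inequality for $d_{KR}$ yields
\[
  d_{KR}\bigl(m^\mu(s), m^\mu(t)\bigr) \le C(1+\sqrt T)\, |t-s|^{1/2},
\]
i.e.\ $t\mapsto m^\mu(t)$ is $1/2$-Hölder continuous from $[0,T]$ into $\cP(K_C)$.

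There is no serious obstacle here; the statement is essentially a repackaging of the $L^2$-acceleration bound that is already encoded in $\Gamma_C$, plus the elementary fact that the Kantorovich–Rubinstein distance is dominated by the cost of a single coupling. The only point requiring a word of care is that the pointwise modulus of continuity is genuinely uniform over $\Gamma_C$ (it depends only on $C$ and $T$, not on the individual trajectory), which is what allows the integration step to go through; this is immediate from the two displayed estimates. One should also note in passing that all trajectories in $\Gamma_C$ take values in the fixed compact $K_C$, so $m^\mu(t)\in\cP(K_C)$ for every $t$ and the distance $d_{KR}$ is well defined and finite.
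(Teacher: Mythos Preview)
Your proof is correct and follows essentially the same approach as the paper's: both rely on the uniform pointwise bound $|\xi(t)-\xi(s)|+|\eta(t)-\eta(s)|\le \tilde C|t-s|^{1/2}$ for $(\xi,\eta)\in\Gamma_C$, obtained from $|\eta|\le C$ and Cauchy--Schwarz on $\eta'$. The only cosmetic difference is that the paper uses the dual (Lipschitz) characterization of the Kantorovich--Rubinstein distance, testing against $1$-Lipschitz functions, whereas you use the primal side, exhibiting the explicit coupling $(e_s,e_t)\sharp\mu$; these are equivalent and lead to the same estimate.
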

\begin{proof}
  Let $\phi$ be any Lipschitz function defined on $K_C$  with a Lipschitz constant not larger than $1$. 
  \begin{displaymath}
    \begin{split}
      \int_{K_C} \phi(x,v) \left(dm^\mu(t_2)(x,v)-dm^\mu(t_1)(x,v)\right)&=\int_{\Gamma_C} \left(\phi(e_{t_2}(\xi,\eta))-\phi(e_{t_1}(\xi,\eta))\right) d\mu(\xi, \eta)\\
&=\int_{\Gamma_C} \left(\phi(\xi(t_2),\eta(t_2))-\phi(\xi(t_1),\eta(t_1))\right) d\mu(\xi, \eta)\\
& \le \int_{\Gamma_C} \left(\left|\xi(t_2)  -\xi(t_1)\right|+   \left|\eta(t_2)- \eta(t_1)\right|\right) d\mu(\xi, \eta)\\
& \le C\int_{\Gamma_C} \left(     \left|t_2  -t_1\right|+      \left|t_2- t_1\right|^{\frac 1 2} \right) d\mu(\xi, \eta)\\
& \le \tilde C \left|t_2- t_1\right|^{\frac 1 2},
    \end{split}
  \end{displaymath}
for a constant $\tilde C$ which depends only on $C$ and $T$.
\end{proof}

It is useful to recall the disintegration theorem:
\begin{theorem}
  \label{sec:setting-notation-1}
Let $X$ and $Y$ be Radon metric spaces, $\pi: X\to Y$ be a Borel map,  $\mu$ be a probability measure on $X$. Set $\nu=\pi\sharp\mu$. There exists a $\nu$-almost everywhere uniquely defined Borel measurable family of probability measures $(\mu_y)_{y\in Y}$ on $X$ such that
\begin{equation}
  \label{eq:38}
\mu_y(X\setminus \pi^{-1}(y))=0,\quad \hbox{ for } \nu\hbox{-almost all } y\in Y,
\end{equation}
and for every Borel function $f: X\to [0,+\infty]$,
\begin{equation}
  \label{eq:39}
\int_X f(x) d\mu(x)=\int_Y  \left( \int_{X}f(x) d\mu_y(x) \right) d\nu(y)=\int_Y  \left( \int_{\pi^{-1}(y)}f(x) d\mu_y(x) \right) d\nu(y).
\end{equation}
Recall that $(\mu_y)_{y\in Y}$ is a Borel family  of probability measures if for any Borel subset $B$ of $X$, $Y\ni y\mapsto \mu_y(B)$ is a Borel 
function from $Y$ to $[0,1]$.
\end{theorem}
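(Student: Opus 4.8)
This is the classical disintegration theorem, and I would essentially reproduce the proof of \cite[Theorem 5.3.1]{MR2129498}; the plan has four parts. \emph{Uniqueness.} Since $X$ is separable and metric, I fix a countable algebra $\mathcal{R}$ generating its Borel $\sigma$-algebra. If $(\mu_y)_{y\in Y}$ and $(\mu'_y)_{y\in Y}$ both satisfy \eqref{eq:38}--\eqref{eq:39}, then for $B\in\mathcal{R}$ and any Borel $A\subseteq Y$, applying \eqref{eq:39} to $f=\mathds{1}_{B\cap\pi^{-1}(A)}$ and using \eqref{eq:38} to rewrite $\int_X\mathds{1}_{B\cap\pi^{-1}(A)}\,d\mu_y=\mathds{1}_A(y)\,\mu_y(B)$ gives $\int_A\mu_y(B)\,d\nu(y)=\mu(B\cap\pi^{-1}(A))=\int_A\mu'_y(B)\,d\nu(y)$; hence $\mu_y(B)=\mu'_y(B)$ for $\nu$-a.e.\ $y$, and intersecting these null sets over the countable family $\mathcal{R}$ yields $\mu_y=\mu'_y$ for $\nu$-a.e.\ $y$.

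\emph{Construction of densities.} For every Borel $B\subseteq X$ the finite measure $\lambda_B:=\mu(B\cap\pi^{-1}(\cdot))$ on $Y$ is absolutely continuous with respect to $\nu$; let $f_B\in L^1(\nu)$ be its Radon--Nikodym derivative, so $0\le f_B\le f_X=1$ $\nu$-a.e., and $B\mapsto f_B$ is monotone and finitely additive up to $\nu$-null sets. Because $\mu$ is a Radon measure on $X$ and $\mathcal{R}$ is countable, I may enlarge $\mathcal{R}$, keeping it a countable algebra, so that it contains, for each of its members $B$, an increasing sequence of compact subsets $K_{B,j}\subseteq B$ with $f_{B\setminus K_{B,j}}\downarrow 0$ $\nu$-a.e.\ (using inner regularity of $\mu$ and $\int_Y f_{B\setminus K_{B,j}}\,d\nu=\mu(B\setminus K_{B,j})\to 0$), and also the sets $\pi^{-1}(U_k)$, where $\{U_k\}$ is a countable base of the (separable metric) space $Y$; note $f_{\pi^{-1}(C)}=\mathds{1}_C$ $\nu$-a.e.\ for any Borel $C\subseteq Y$.

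\emph{Assembling the measures and verifying the identities.} Outside a single $\nu$-null set $N$, the set function $B\mapsto f_B(y)$ on $\mathcal{R}$ is monotone, finitely additive, $[0,1]$-valued, with $f_X(y)=1$; the compact inner-approximation property upgrades this to countable additivity on $\mathcal{R}$ via the usual finite-intersection-property argument, so Carath\'eodory's extension theorem produces a Borel probability measure $\mu_y$ on $X$ (for $y\in N$, set $\mu_y$ equal to a fixed Dirac mass). By construction $\int_X\mathds{1}_B\,d\mu_y=f_B(y)$ for $B\in\mathcal{R}$; a Dynkin-class argument then propagates the identity $\int_Y\bigl(\int_X f\,d\mu_y\bigr)d\nu(y)=\int_X f\,d\mu$ from indicators of sets in $\mathcal{R}$ to all bounded nonnegative Borel $f$, which is \eqref{eq:39} and also shows $y\mapsto\int_X f\,d\mu_y$ is Borel. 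Finally, for $y\notin N$ and each $k$, $\mu_y(\pi^{-1}(U_k))=f_{\pi^{-1}(U_k)}(y)=\mathds{1}_{U_k}(y)$; since $Y$ is metric, $\{y\}=\bigcap\{U_k:y\in U_k\}$, so $\pi^{-1}(y)=\bigcap\{\pi^{-1}(U_k):y\in U_k\}$ is a countable intersection of sets of full $\mu_y$-measure, which gives $\mu_y(X\setminus\pi^{-1}(y))=0$, i.e.\ \eqref{eq:38}.

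The main obstacle is the third part. The finite additivity of $B\mapsto f_B$ and the convergence $f_{B\setminus K_{B,j}}\downarrow 0$ hold a priori only modulo null sets depending on the sets involved, so the real work is to control all of these countably many exceptional sets simultaneously; this is exactly where both the countability of $\mathcal{R}$ and the Radon property of $\mu$ (inner regularity with respect to compact sets) are essential, after which the passage from finite to countable additivity on the algebra is the classical Daniell--Carath\'eodory construction.
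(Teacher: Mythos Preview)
The paper does not prove this theorem at all: it is stated with the preamble ``It is useful to recall the disintegration theorem'' and then applied, with the implicit reference being \cite{MR2129498}. Your proposal is a correct sketch of the standard proof (indeed you cite \cite[Theorem~5.3.1]{MR2129498} yourself), so there is nothing to compare; if anything, the paper's ``proof'' is simply the citation, and your write-up supplies the details that the authors omitted.
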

It is possible to apply Theorem~\ref{sec:setting-notation-1} with $X=\Gamma_C$, $Y=\Theta_r$, $\pi=e_0$ and $\nu=m_0$ (identifying $m_0$ and its restriction to $\Theta_r$): for any $\mu\in \cP_{m_0}(\Gamma_C)$,
there exists  a $m_0$-almost everywhere uniquely defined Borel measurable family of probability measures $(\mu_{(x,v)})_{(x,v)\in \Theta_r}$ on $\Gamma_C$ such that
\begin{equation}
\label{eq:40}
\mu_{(x,v)}( \Gamma_C \setminus e_0^{-1}(x,v))=0,\quad \hbox{ for } m_0\hbox{-almost all } (x,v)\in \Theta_r,
\end{equation}
and for every Borel function $f: \Gamma_C\to [0,+\infty]$,
\begin{equation}
\label{eq:41}
\begin{split}
\int_{\Gamma_C} f(\xi,\eta) d\mu(\xi,\eta)&=\int_{\Theta_r}  \left( \int_{\Gamma_C}f(\xi,\eta) d\mu_{(x,v)}(\xi,\eta) \right) dm_0(x,v)\\ &=
\int_{\Theta_r}  \left( \int_{e_0^{-1}(x,v)}f(\xi,\eta) d\mu_{(x,v)}(\xi,\eta) \right) dm_0(x,v).  
\end{split}
\end{equation}

 For $(x,v)\in \Theta_r$, $m_0$ supported in $\Theta_r$ and $\mu\in \cP_{m_0}(\Gamma_C)$ (where $C=C(r,M)$ is the constant 
appearing in Proposition~\ref{sec:contr-probl-conv-1}), let us set
\begin{equation}
  \label{eq:42}\Gamma^{\mu,{\rm opt}}[x,v]=\left\{ (\xi,\eta)\in \Gamma[x,v]\;:\; J^\mu( \xi,\eta  )=\min_{ ( \widetilde\xi,\widetilde \eta  )\in \Gamma[x,v]}  J^\mu( \widetilde \xi,\widetilde \eta  ) \right\}.
\end{equation}
Standard arguments from the calculus of variations yield that for each $\mu\in \cP_{m_0}(\Gamma_C)$ and $(x,v)\in \Xi^{\rm ad}$, $\Gamma^{\mu,{\rm opt}}[x,v]$ is not empty. Moreover, from Proposition
\ref{sec:contr-probl-conv-1},
 $\Gamma^{\mu,{\rm opt}}[x,v]\subset \Gamma_C$ for all
$(x,v)\in \Theta_r$.
\begin{proposition}
\label{sec:setting-notation-5}
Under the assumptions made  on $\ell$, $F$ and $G$ in paragraph \ref{sec:MFG-contr-accel1},
and Assumption~\ref{sec:setting-notation-9}, let $C=C(r,M)$
be chosen as in Proposition~\ref{sec:contr-probl-conv-1}.
\\
Let a sequence of probability measures $(\mu_i)_{i\in \N}$,  $\mu_i\in\cP_{m_0}(\Gamma_C)$,
 be narrowly convergent to $\mu\in \cP(\Gamma_C)$. Let $(x^i,v^i)_{i\in \N}$, $(x^i, v^i)\in \Theta_r$, 
converge to $(x,v)$.  Consider a sequence     $(\xi^i,\eta^i)_{i\in \N}$ such that for all $i\in \N$, $(\xi^i, \eta^i) \in \Gamma^{\mu_ i,\rm{opt}}[x^i, v^i]$. If $(\xi^i,\eta^i)_{i\in \N}$ tends to $(\xi,\eta)$
uniformly,  then $(\xi,\eta)\in \Gamma^{\mu,\rm{opt}}[x, v]$. In other words, the multivalued map $(x,v,\mu)\mapsto \Gamma^{\mu,\rm{opt}}[x, v]$ has closed graph.
\end{proposition}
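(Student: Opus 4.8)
The plan is to argue exactly as in the proof of Lemma~\ref{sec:clos-graph-prop-7} (and Proposition~\ref{sec:contr-probl-polyg-1}), carrying along the extra parameter $\mu$. Fix $(\widetilde\xi,\widetilde\eta)\in\Gamma[x,v]$ with $\widetilde\eta\in W^{1,2}(0,T;\R^n)$; we must show $J^\mu(\xi,\eta)\le J^\mu(\widetilde\xi,\widetilde\eta)$. First, since $(x^i,v^i)\to(x,v)$ in $\Theta_r\subset\Xi^{\rm ad}$, and $\Theta_r$ satisfies the hypothesis of Proposition~\ref{sec:contr-probl-polyg-1}, after passing to a subsequence we may assume one of the three conditions of Lemma~\ref{sec:clos-graph-prop-12} holds; apply that lemma to $(\widetilde\xi,\widetilde\eta)$ to get $(\widetilde\xi^i,\widetilde\eta^i)\in\Gamma[x^i,v^i]$ with $\widetilde\eta^i\in W^{1,2}$, $(\widetilde\xi^i,\widetilde\eta^i)\to(\widetilde\xi,\widetilde\eta)$ uniformly, and $\|d\widetilde\eta^i/dt\|_{L^2}\to\|d\widetilde\eta/dt\|_{L^2}$. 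By optimality of $(\xi^i,\eta^i)$ for $J^{\mu_i}$ we have $J^{\mu_i}(\xi^i,\eta^i)\le J^{\mu_i}(\widetilde\xi^i,\widetilde\eta^i)$.

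The new ingredient, compared with Lemma~\ref{sec:clos-graph-prop-7}, is the passage to the limit in the $F$-term and the $G$-term, which now also involves $\mu_i\to\mu$. The key point is: by Lemma~\ref{sec:setting-notation-4}, $m^{\mu_i}(t)\to m^\mu(t)$ narrowly for every $t$, and since $F,G$ are continuous for the narrow convergence and uniformly bounded, $F[m^{\mu_i}(\cdot)]\to F[m^\mu(\cdot)]$ and $G[m^{\mu_i}(\cdot)]\to G[m^\mu(\cdot)]$ in a sufficiently strong sense; combined with the uniform convergence of the trajectories and dominated convergence (using the uniform bound $M$ from (\ref{eq:37})), one gets
\[
\lim_{i\to\infty}\int_0^T F[m^{\mu_i}(s)](\widetilde\xi^i(s),\widetilde\eta^i(s))\,ds=\int_0^T F[m^{\mu}(s)](\widetilde\xi(s),\widetilde\eta(s))\,ds,
\]
and likewise for the $G$-terms; the $\ell$-term passes to the limit by continuity of $\ell$ and uniform convergence, and the $\tfrac12\|d\widetilde\eta^i/dt\|_{L^2}^2$ term by the construction in Lemma~\ref{sec:clos-graph-prop-12}. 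Hence $J^{\mu_i}(\widetilde\xi^i,\widetilde\eta^i)\to J^\mu(\widetilde\xi,\widetilde\eta)$, so the right-hand sides are bounded, hence so are $J^{\mu_i}(\xi^i,\eta^i)$; since the $F$, $\ell$, $G$ contributions to $J^{\mu_i}(\xi^i,\eta^i)$ are bounded (uniform bound $M$ and uniform bound on $|\xi^i|$, $|\eta^i|$), we deduce $\int_0^T|d\eta^i/dt|^2\,ds$ is bounded. This forces $d\eta^i/dt\rightharpoonup d\eta/dt$ weakly in $L^2$, so $\eta\in W^{1,2}$, and by weak lower semicontinuity $\liminf_i\int_0^T|d\eta^i/dt|^2\,ds\ge\int_0^T|d\eta/dt|^2\,ds$.

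Finally, on the left-hand side we pass to the limit in the same way for the $F$, $\ell$, $G$ terms (again Lemma~\ref{sec:setting-notation-4}, continuity, uniform convergence of $(\xi^i,\eta^i)$ to $(\xi,\eta)$, dominated convergence), while the quadratic acceleration term is handled by the liminf inequality just obtained; this yields $J^\mu(\xi,\eta)\le\liminf_i J^{\mu_i}(\xi^i,\eta^i)\le\lim_i J^{\mu_i}(\widetilde\xi^i,\widetilde\eta^i)=J^\mu(\widetilde\xi,\widetilde\eta)$. Since $(\widetilde\xi,\widetilde\eta)\in\Gamma[x,v]$ with $\widetilde\eta\in W^{1,2}$ was arbitrary, and $(\xi,\eta)\in\Gamma[x,v]$ (it is a uniform limit of trajectories in $\Gamma$, with the constraint $\xi'=\eta$ and $(\xi,\eta)\in\Xi$ preserved) with $\eta\in W^{1,2}$, we conclude $(\xi,\eta)\in\Gamma^{\mu,\rm opt}[x,v]$.

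I expect the main obstacle to be the justification of the joint limit $\int_0^T F[m^{\mu_i}(s)](\widetilde\xi^i(s),\widetilde\eta^i(s))\,ds\to\int_0^T F[m^{\mu}(s)](\widetilde\xi(s),\widetilde\eta(s))\,ds$ (and the analogue for $(\xi^i,\eta^i)$): the integrand varies in $i$ through three coupled channels (the measure $m^{\mu_i}(s)$, the point $\widetilde\xi^i(s)$, and $\widetilde\eta^i(s)$), so one needs either an equicontinuity/uniform-convergence argument for the family $\{F[m^{\mu_i}(s)]\}$ on the compact set $K_C$, or a careful triangle-inequality split combined with the modulus of continuity of the $F[m](\cdot)$'s on $K_C$; this is where the boundedness and narrow continuity of $F$ and $G$, together with the fact that all relevant trajectories lie in the fixed compact $\Gamma_C$, must be used most carefully. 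Everything else is a routine repetition of the calculus-of-variations arguments already used in Lemmas~\ref{sec:clos-graph-prop-12} and \ref{sec:clos-graph-prop-7}.
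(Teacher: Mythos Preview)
Your proposal is correct and follows essentially the same route as the paper: establish narrow convergence $m^{\mu_i}(t)\to m^{\mu}(t)$ via Lemma~\ref{sec:setting-notation-4}, handle the $F$- and $G$-terms by continuity plus dominated convergence (using the uniform bound $M$), and then reproduce verbatim the calculus-of-variations argument of Lemma~\ref{sec:clos-graph-prop-7}/Proposition~\ref{sec:contr-probl-polyg-1}. Your final cautionary paragraph about the joint limit is exactly the step the paper dispatches with the phrase ``from the continuity assumptions made on $F$ and $G$ and the dominated convergence theorem''; no additional equicontinuity argument is needed beyond what you outline, since $F[m](\cdot,\cdot)\in C_b^0(\Xi)$ and the pointwise convergence $F[m^{\mu_i}(s)](\xi^i(s),\eta^i(s))\to F[m^{\mu}(s)](\xi(s),\eta(s))$ for each $s$ (by narrow continuity of $F$ and uniform convergence of the trajectories) together with the uniform bound $M$ suffice for dominated convergence.
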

\begin{proof}
First, from Lemma~\ref{sec:setting-notation-4}, $\mu\in \cP_{m_0}(\Gamma_C)$ and for all $t\in [0,T]$, $(m^{\mu_i}(t))_{i\in \N}$ is narrowly convergent to $m^{\mu}(t)$. From the continuity assumptions made on $F$ and $G$ and the dominated convergence theorem, we deduce that 
\begin{displaymath}
  \begin{split}
    \int_0^T F[m^{\mu_i}(t)](\xi^i(t),\eta^i(t) ) dt\quad &\to \quad  \int_0^T F[m^{\mu}(t)](\xi(t),\eta(t) ) dt,\\
G[m^{\mu_i}(T)](\xi^i(T),\eta^i(T) ) &\to \quad   G[m^{\mu}(T)](\xi(T),\eta(T) ) .
  \end{split}
\end{displaymath}
The last part  of the proof is completely similar to the proof of Proposition \ref{sec:contr-probl-polyg-1}.
It makes use of Assumption~\ref{sec:setting-notations-0} and Lemma \ref{sec:exist-an-equil}.
\end{proof}
\begin{definition}
  \label{sec:setting-notation-3}
The probability measure $\mu \in \cP_{m_0}(\Gamma)$ is a constrained mean field game equilibrium associated with the initial distribution $m_0$ if 
\begin{equation}\label{eq:43}
  {\rm{supp}}(\mu)\subset \mathop \bigcup_{
(x,v)\in   {\rm{supp}}(m_0)      }\Gamma^{\mu,{\rm opt}}[x,v].
\end{equation}
\end{definition}

\begin{theorem}
  \label{sec:setting-notation-6}
Under the assumptions made  on $F$ and $G$ at the beginning of paragraph~\ref{sec:MFG-contr-accel1}
and Assumption~\ref{sec:setting-notation-9}, let $C=C(r,M)$
be chosen as in Proposition~\ref{sec:contr-probl-conv-1}.
There exists a constrained mean field game equilibrium $\mu\in \cP_{m_0}(\Gamma_C)$, see Definition~\ref{sec:setting-notation-3}. Moreover, $t\mapsto e_t\sharp \mu \in C^{1/2} ([0,T];\cP(K_C))$,  ($K_C$ is defined in (\ref{eq:33}) and $\cP(K_C)$ is endowed with the  Kantorovitch-Rubinstein distance).
\end{theorem}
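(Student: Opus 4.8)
The plan is to prove Theorem~\ref{sec:setting-notation-6} by a direct application of Kakutani's fixed point theorem to a suitable multivalued map on $\cP_{m_0}(\Gamma_C)$. First I would define, for $\mu\in\cP_{m_0}(\Gamma_C)$, the set
\[
\mathcal{E}(\mu)=\left\{\nu\in\cP_{m_0}(\Gamma_C)\;:\;{\rm supp}(\nu)\subset\bigcup_{(x,v)\in{\rm supp}(m_0)}\Gamma^{\mu,{\rm opt}}[x,v]\right\},
\]
and observe that a fixed point $\mu\in\mathcal{E}(\mu)$ is exactly a constrained MFG equilibrium in the sense of Definition~\ref{sec:setting-notation-3}. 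The ambient space is $\cP_{m_0}(\Gamma_C)$, which by the discussion following Lemma~\ref{sec:setting-notation-4} is a nonempty (Remark~\ref{sec:setting-notation}), convex, compact subset of the locally convex space of signed measures on $\Gamma_C$ equipped with the narrow topology, so the topological hypotheses of Kakutani's theorem are met on the domain.

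The three things to check are: (i) $\mathcal{E}(\mu)$ is nonempty for every $\mu$; (ii) $\mathcal{E}(\mu)$ is convex; (iii) the graph of $\mathcal{E}$ is closed (equivalently, since everything lives in a compact space, $\mathcal{E}$ is upper semicontinuous with closed values). For (i), I would use the measurable selection argument already sketched in Remark~\ref{sec:setting-notation}: the map $(x,v)\mapsto\Gamma^{\mu,{\rm opt}}[x,v]$ from $\Theta_r$ to closed nonempty subsets of $\Gamma_C$ has closed graph by Proposition~\ref{sec:setting-notation-5} (taking the constant sequence $\mu_i=\mu$), hence is Borel measurable, so Kuratowski--Ryll-Nardzewski yields a Borel selection $j_\mu:\Theta_r\to\Gamma_C$ with $j_\mu(x,v)\in\Gamma^{\mu,{\rm opt}}[x,v]$ and $e_0\circ j_\mu={\rm id}$; then $(j_\mu)\sharp m_0\in\mathcal{E}(\mu)$. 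For (ii), convexity is immediate: if $\nu_1,\nu_2$ are supported in the fixed set $S_\mu:=\bigcup_{(x,v)\in{\rm supp}(m_0)}\Gamma^{\mu,{\rm opt}}[x,v]$ and both have $e_0$-pushforward $m_0$, so does any convex combination. (One should note $S_\mu$ is itself closed, using the closed-graph property of $\Gamma^{\mu,{\rm opt}}$ restricted to the compact ${\rm supp}(m_0)\subset\Theta_r$, so ``supp$(\nu)\subset S_\mu$'' is a sensible closed condition.)

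The main obstacle is (iii), the closed graph of $\mathcal{E}$. Here I would take $\mu_i\to\mu$ narrowly in $\cP_{m_0}(\Gamma_C)$ and $\nu_i\in\mathcal{E}(\mu_i)$ with $\nu_i\to\nu$ narrowly, and show $\nu\in\mathcal{E}(\mu)$; by Lemma~\ref{sec:setting-notation-4}, $\mu\in\cP_{m_0}(\Gamma_C)$ and $e_0\sharp\nu=m_0$, so only the support condition is at stake. The standard route is via disintegration (Theorem~\ref{sec:setting-notation-1}) of $\nu_i$ over $e_0$ against $m_0$: write $\nu_i=\int_{\Theta_r}(\nu_i)_{(x,v)}\,dm_0(x,v)$ with $(\nu_i)_{(x,v)}$ supported in $e_0^{-1}(x,v)\cap\Gamma^{\mu_i,{\rm opt}}[x,v]$ for $m_0$-a.e.\ $(x,v)$. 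Then one argues that, up to a suitable ``sequential'' extraction and using that $\Gamma_C$ is a compact metric space, the limit disintegration $\nu_{(x,v)}$ is supported in $\Gamma^{\mu,{\rm opt}}[x,v]$ for $m_0$-a.e.\ $(x,v)$ — this is where Proposition~\ref{sec:setting-notation-5} does the real work, converting narrow convergence of the pair $(\mu_i,(x^i,v^i))$ plus uniform convergence of optimal trajectories into membership of the limit in $\Gamma^{\mu,{\rm opt}}[x,v]$. A clean way to package this is: for any bounded continuous $f\ge 0$ on $\Gamma_C$ vanishing on $S_\mu$, show $\int f\,d\nu=0$; one expands $\int f\,d\nu=\lim_i\int f\,d\nu_i=\lim_i\int_{\Theta_r}\int f\,d(\nu_i)_{(x,v)}\,dm_0$, and uses that $f$ is small near the (closed) set $S_{\mu}$ together with the closed-graph convergence $\Gamma^{\mu_i,{\rm opt}}\to\Gamma^{\mu,{\rm opt}}$ from Proposition~\ref{sec:setting-notation-5} to control the integrand; a covering/compactness argument on $\Gamma_C$ turns this into $\int f\,d\nu=0$. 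Once the fixed point $\mu$ is obtained, the final H\"older regularity $t\mapsto e_t\sharp\mu\in C^{1/2}([0,T];\cP(K_C))$ is exactly Lemma~\ref{sec:setting-notation-2}, since $\mu\in\cP(\Gamma_C)$. The delicate points I expect to spend care on are the measurability and convergence of the disintegrations and making rigorous the passage ``${\rm supp}(\nu_i)\subset S_{\mu_i}\Rightarrow{\rm supp}(\nu)\subset S_\mu$'' when the sets $S_{\mu_i}$ move with $i$; everything else is routine given Propositions~\ref{sec:contr-probl-conv-1} and \ref{sec:setting-notation-5} and the compactness of $\Gamma_C$.
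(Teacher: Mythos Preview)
Your overall strategy is correct and matches the paper's: apply Kakutani's fixed point theorem to a multivalued map on the compact convex set $\cP_{m_0}(\Gamma_C)$, checking nonemptiness, convexity, and closed graph, and read off the H{\"o}lder regularity from Lemma~\ref{sec:setting-notation-2}. Your definition of $\mathcal{E}(\mu)$ via the global support condition is equivalent to the paper's definition via disintegrations, because $e_0\sharp\nu=m_0$ forces $\nu_{(x,v)}$ to be supported in $e_0^{-1}(x,v)$, and $e_0^{-1}(x,v)\cap S_\mu=\Gamma^{\mu,{\rm opt}}[x,v]$.

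The one place where you take a different and harder route is Step~(iii). Your test-function argument (``take $f\ge 0$ continuous, vanishing on $S_\mu$, and show $\int f\,d\nu=0$'') is awkward because $f$ does not vanish on $S_{\mu_i}$, and Proposition~\ref{sec:setting-notation-5} only gives one-sided information ($\limsup_i S_{\mu_i}\subset S_\mu$), not quantitative closeness. The paper bypasses this entirely by appealing to the Kuratowski lower semicontinuity of supports under narrow convergence: for $\nu_i\to\nu$ narrowly on a metric space, every $(\xi,\eta)\in{\rm supp}(\nu)$ is the uniform limit of some $(\xi^i,\eta^i)\in{\rm supp}(\nu_i)\subset S_{\mu_i}$. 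Setting $(x^i,v^i)=e_0(\xi^i,\eta^i)$, one has $(\xi^i,\eta^i)\in\Gamma^{\mu_i,{\rm opt}}[x^i,v^i]$, and Proposition~\ref{sec:setting-notation-5} immediately gives $(\xi,\eta)\in\Gamma^{\mu,{\rm opt}}[x,v]\subset S_\mu$. This avoids disintegrating the $\nu_i$ and any covering/compactness bookkeeping; I would recommend replacing your Step~(iii) by this argument.
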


\begin{proof}
The proof follows that of Cannarsa and Capuani in \cite{MR3888967}.
Define the multivalued map $E$ from $\cP_{m_0}(\Gamma_C)$ to  $\cP_{m_0}(\Gamma_C)$ as follows: for any $\mu\in \cP_{m_0}(\Gamma_C)$,
\begin{equation}
  \label{eq:44}
E(\mu)=\left\{ \hat \mu \in \cP_{m_0}(\Gamma_C)\;:\; {\rm{supp}}(\hat \mu_{(x,v)})\subset\Gamma^{\mu, {\rm{opt}}}[x,v] \hbox{ for } m_0\hbox{-almost all }(x,v)\in \Xi \right\},
\end{equation}
where $(\hat \mu_{(x,v)})_{(x,v)\in  \Xi}$ is the $m_0$-almost everywhere uniquely defined Borel measurable family of probability measures which disintegrates $\hat \mu$, see the lines after Theorem~\ref{sec:setting-notation-1}. 
\\
Then the measure $\mu\in \cP_{m_0}(\Gamma_C)$ is a constrained mean field game equilibrium if and only if  $\mu\in E(\mu)$. This leads us to  apply Kakutani fixed point  theorem to the multivalued map $E$, see \cite{MR2378491,MR46638}. Several steps are needed in order to check that the assumptions of Kakutani theorem are satisfied. First of all, we recall that $\cP_{m_0}(\Gamma_C)$ is compact.
\begin{description}
\item[Step 1:] For any $\mu\in \cP_{m_0}(\Gamma_C)$, $E(\mu)$ is a non empty convex set.\\
First,  we have already seen   that $\Gamma^{\mu, {\rm{opt}}}[x,v]\not= \emptyset$ and that the map $(x,v)\mapsto \Gamma^{\mu,\rm{opt}}[x, v]$ has closed graph.
Therefore, from \cite{MR1048347}, $(x,v)\mapsto \Gamma^{\mu,\rm{opt}}[x, v]$ has a Borel measurable selection $(x,v)\mapsto (\xi^\mu_{(x,v)}, \eta^\mu_{(x,v)})$. The measure $\hat \mu$ defined by 
\begin{displaymath}
  \hat \mu (B)= \int_{\Theta_r} \delta_{ (\xi^\mu_{(x,v)}, \eta^\mu_{(x,v)})}(B)dm_0(x,v), \quad \hbox{for all Borel subset $B$ of $\Gamma_C$,}
\end{displaymath}
 belongs to $E(\mu)$; indeed, the total mass of $\hat \mu$  is one because $m_0$ is supported in $\Theta_r$ and $C=C(r,M)$
as in Proposition~\ref{sec:contr-probl-conv-1} so $E(\mu)$ is non empty.\\
Second, take $\mu^1$, $\mu^2$ in $E(\mu)$ and $\lambda\in [0,1]$. We wish to prove that $\lambda \mu_1+(1-\lambda)\mu_2\in E(\mu)$. It is clear that $\lambda  \mu^1 +(1-\lambda)\mu^2$ belongs to $\cP_{m_0}(\Gamma_C)$. On the other hand, since $\mu^1$ belongs to $E(\mu)$, there exist a $m_0$-almost everywhere uniquely defined Borel measurable family
$ (\hat \mu^1_{(x,v)})_{(x,v)\in  \Theta_r}$ of probability measures which disintegrates $\mu^1$ and a subset $A^1$ of $\Theta_r$ such that $m_0(A^1)=0$ and $\hbox{supp}(\mu^1_{(x,v)})\subset  \Gamma^{\mu,\rm{opt}}[x, v]$ for all $(x,v)\in \Theta_r\setminus A^1$.
Similarly, $\mu^2$ can be disintegrated into a $m_0$-almost everywhere uniquely defined Borel measurable family
$ (\hat \mu^2_{(x,v)})_{(x,v)\in  \Theta_r}$ of probability measures, and there exists a subset $A^2$ of $\Theta_r$ such that $m_0(A^2)=0$ and $\hbox{supp}(\mu^2_{(x,v)})\subset  \Gamma^{\mu,\rm{opt}}[x, v]$ for all $(x,v)\in \Theta_r\setminus A^2$. Therefore,
$\lambda  \mu^1 +(1-\lambda)\mu^2$ can be disintegrated as follows: for each Borel function $f$ defined on $\Gamma_C$,
\begin{displaymath}
  \begin{split}
&\int_{\Gamma_C} f(\xi,\eta) d\left(\lambda\mu^1+(1-\lambda\right)\mu^2)(\xi,\eta)]
\\
=
&
\int_{\Theta_r}  \left( \int_{\Gamma_C}f(\xi,\eta) d\left( \lambda \mu^1_{(x,v)}+(1-\lambda)  \mu^2_{(x,v)}\right) (\xi,\eta) \right) dm_0(x,v),\\
&\hbox{supp}\left(\lambda \mu^1_{(x,v)}+(1-\lambda)  \mu^2_{(x,v)}\right)\subset  \Gamma^{\mu,\rm{opt}}[x, v],\quad
 \quad \forall (x,v)\in \Theta_r\setminus( A^1\cup A^2 ),
\end{split}
\end{displaymath}
and $m_0(A^1\cup A^2)=0$. Hence, $\lambda  \mu^1 +(1-\lambda)\mu^2\in E(\mu)$, so $E(\mu)$ is convex.
\item[Step 2:] The multivalued map $E$ has closed graph.\\
Consider a sequence $(\mu^i)_{i\in \N}$, $\mu^i\in \cP_{m_0}(\Gamma_C)$ narrowly convergent to  $\mu\in \cP_{m_0}(\Gamma_C)$. 
Let a sequence  $(\hat\mu^i)_{i\in \N}$, $\hat\mu^i\in E(\mu^i)$ be narrowly convergent to $\hat\mu\in \cP_{m_0}(\Gamma_C)$.
 We claim that  $\hat\mu\in E(\mu)$.\\
First, there  exists  a $m_0$-almost everywhere uniquely defined Borel measurable family of probability measures $(\hat \mu_{(x,v)})_{(x,v)}$ on $\Gamma_C$ such that (\ref{eq:40}) and (\ref{eq:41}) hold for $\hat \mu$ and $\hat \mu_{(x,v)}$.
In particular, there exists a subset $A$ of $\Theta_r$ with $m_0(A)=0$ such that for $(x,v)\in \Theta_r\setminus A$, 
$\hat\mu_{(x,v)}( \Gamma_C \setminus e_0^{-1}(x,v))=0$.
\\
Take $(x,v)\in \Theta_r\setminus A$ and $(\hat \xi,\hat \eta)\in {\rm {supp}}(\hat\mu_{(x,v)})$.
\\
 The Kuratowski convergence theorem applied to $(\hat\mu^i)_i,\, \hat \mu$, see \cite{MR0233396}, implies that there exists a sequence $(\hat \xi^i,\hat \eta^i)_{i\in \N}$, $(\hat \xi^i,\hat \eta^i)\in {\rm{supp}} (\hat\mu^i)$, which converges to $(\hat \xi,\hat \eta)$ uniformly in $[0,T]$. Set $(x^i,v^i)=(\hat \xi^i(0),\hat \eta^i(0))\in \Theta_r$. Since $\hat \mu^i\in E(\mu^i)$, there holds that $ (\hat \xi^i,\hat \eta^i)\in \Gamma^{\mu^i, {\rm{opt}}}[x^i,v^i]$. From Proposition~\ref{sec:setting-notation-5}, we see that $ (\hat \xi,\hat \eta)\in \Gamma^{\mu, {\rm{opt}}}[x,v]$. Since $(x,v)$ is any point in $ \Theta_r\setminus A$, this implies that $\hat \mu \in E(\mu)$.
\end{description}
All the assumptions of Kakutani theorem are satisfied: hence, there exists $\mu\in \cP_{m_0}(\Gamma_C)$ such that $\mu\in E(\mu)$. This achieves the proof.
\end{proof}

\begin{definition}\label{mild}
A pair $(u,m)$, where $u$ is a measurable function defined on $ \Xi \times [0,T]$ and $m \in C^0([0,T]; \cP(\Xi))$, is called {\it {a mild solution of the mean field game}}, if there exists a constrained mean fied game equilibrium $\mu$ for $m_0$ (see Definition~\ref{sec:setting-notation-3}) such that:
\begin{itemize}
\item[i)]
$m(t)=e_t\sharp \mu$;
\item[ii)] $\forall (x,v)\in \Xi^{\rm ad}$, $u( x,v,t)$ is given by
  \begin{displaymath}
u(x,v,t)= \inf_{(\xi, \eta, \alpha) \in \Gamma[x,v,t]}
\left(
    \begin{split}
     & \int_t^T
\left(F[m(s)] (\xi(s),\eta(s) )+ \ell(\xi(s),\eta(s), s)
  +
\frac 1 2 |\alpha(s)|^2  \right) ds\\ & +  G[m(T)](\xi(T), \eta(T))    
    \end{split}\right).
  \end{displaymath}
where $\Gamma[x,v,t]$ is the set of admissible trajectories starting from $(x,v)$ at $s=t$.
\end{itemize}
\end{definition}
A corollary of Theorem \ref{sec:setting-notation-6} is:
\begin{corollary}\label{sec:exist-an-equil-1}
Under the  assumptions of  Theorem \ref{sec:setting-notation-6},
there exists a mild solution $(u,m)$. Moreover, $m\in C^{\frac 1 2}([0,T];\cP(K_C))$.
\end{corollary}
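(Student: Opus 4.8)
The plan is to read off the mild solution directly from the equilibrium produced by Theorem~\ref{sec:setting-notation-6}, the only genuine content being a measurability check for $u$. First I would invoke Theorem~\ref{sec:setting-notation-6} to obtain a constrained mean field game equilibrium $\mu\in\cP_{m_0}(\Gamma_C)$ and set $m(t)=e_t\sharp\mu$. The last assertion of that theorem --- equivalently Lemma~\ref{sec:setting-notation-2} --- gives $m\in C^{\frac12}([0,T];\cP(K_C))$, which is the ``moreover'' part of the corollary, and in particular $m\in C^0([0,T];\cP(\Xi))$; thus requirement (i) of Definition~\ref{mild} holds by construction. Next I would define $u$ on $\Xi^{\rm ad}\times[0,T]$ by the formula in (ii) of Definition~\ref{mild}, that is, as the value function of the state constrained control problem with running cost $(y,w,s)\mapsto F[m(s)](y,w)+\ell(y,w,s)$ and terminal cost $G[m(T)]$, and extend it by $+\infty$ on $(\Xi\setminus\Xi^{\rm ad})\times[0,T]$. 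Since $m\in C^0([0,T];\cP(\Xi))$ and $F,G$ are bounded and continuous, this cost satisfies Assumption~\ref{sec:setting-notations-0} with constants controlled by $M$; running the argument of Lemma~\ref{sec:contr-probl-conv} with initial time $t$ in place of $0$ shows that for every $(x,v)\in\Xi^{\rm ad}$ and $t\in[0,T]$ the set of admissible trajectories issued from $(x,v)$ at time $t$ is non-empty and $u(x,v,t)$ is finite. Requirement (ii) then holds by definition, and since $m=m^\mu$ the minimizers defining $u(x,v,0)$ are precisely the elements of $\Gamma^{\mu,{\rm opt}}[x,v]$, so the equilibrium property \eqref{eq:43} is consistent with this choice of $u$; hence $(u,m)$ is a mild solution.

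The point that requires a real argument is that $u$ is Borel measurable on $\Xi\times[0,T]$. I would prove that $u$ is lower semicontinuous on $\Xi^{\rm ad}\times[0,T]$ by the direct method, using exactly the ingredients already exploited in Lemmas~\ref{sec:contr-probl-conv} and~\ref{sec:clos-graph-prop-7}: along a sequence $(x^k,v^k,t^k)\to(x,v,t)$ in $\Xi^{\rm ad}\times[0,T]$ realizing $\liminf_k u(x^k,v^k,t^k)$, the finiteness of the optimal costs forces a uniform $L^2$ bound on the accelerations, hence, after a routine restriction/extension of the optimal trajectories to a common time interval and extraction of a subsequence, uniform convergence of $(\xi^k,\eta^k)$ and weak $L^2$ convergence of the accelerations to an admissible trajectory starting at $(x,v)$ at time $t$; lower semicontinuity of the integral functional (convexity in the acceleration, continuity of the remaining terms) then yields $u(x,v,t)\le\liminf_k u(x^k,v^k,t^k)$. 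This ``easy'' direction does not use the restrictive condition \eqref{eq:12}, which is only needed to \emph{construct} approximating trajectories, not to pass to the limit. Since $\Xi^{\rm ad}$ is closed, the extension of $u$ by $+\infty$ remains lower semicontinuous on $\Xi\times[0,T]$, hence Borel measurable. (Alternatively, one may combine lower semicontinuity of $u(\cdot,\cdot,t)$ for fixed $t$ with continuity of $t\mapsto u(x,v,t)$, a standard consequence of the dynamic programming principle with bounded data, to deduce joint measurability from the Carathéodory property.)

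The main --- and rather mild --- obstacle is thus this measurability statement: everything else is immediate from Theorem~\ref{sec:setting-notation-6} and the results of Section~\ref{sec:MFG-contr-accel}. The only technical care in the lower semicontinuity argument is the handling of the moving initial time $t^k\to t$ when passing to the limit in the control problem, which is routine.
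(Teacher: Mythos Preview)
Your proposal is correct and follows exactly the route the paper intends: the corollary is stated immediately after Theorem~\ref{sec:setting-notation-6} with no proof, so the existence of $\mu$, the definition $m(t)=e_t\sharp\mu$, and the $C^{1/2}$ regularity via Lemma~\ref{sec:setting-notation-2} are meant to be read off directly, just as you do. The one point you develop in detail---Borel measurability of $u$ via lower semicontinuity on $\Xi^{\rm ad}\times[0,T]$---is left entirely implicit in the paper; your argument for it (direct method, weak $L^2$ compactness of the accelerations along a subsequence realizing the $\liminf$, extension by $+\infty$ off the closed set $\Xi^{\rm ad}$) is standard and sound, and your remark that this ``easy'' direction does not require condition~\eqref{eq:12} is accurate.
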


\begin{remark}\label{sec:existence-mean-field}
Under classical monotonicity assumptions for $F$ and $G$, see e.g. \cite{MR3888967}, 
the mild solution is unique.
\end{remark}

 \subsection{Non quadratic running costs}
 \label{sec:optim-contr-accel2_3}
It is possible to generalize the results of Sections~\ref{sec:state-constr-optim-1} and~\ref{sec:MFG-contr-accel}  to costs of the form
 \begin{equation}
\label{cost3}
J(\xi,\eta, \alpha)
=\int_0^T \left(\ell(\xi(s),\eta(s) ,s) +\frac 1 p   |\alpha|^p(s) \right) ds+g(\xi(T), \eta(T)),
\end{equation}
where  $1< p$, for dynamics given by (\ref{eq:1}) and staying in $\Xi$.
\\
For brevity, we  restrict ourselves to the closed graph result, whose proof is completely similar to that of 
Proposition~\ref{sec:contr-probl-polyg-1}. The generalization of Theorem \ref{sec:setting-notation-6} is then possible.  


\begin{proposition}\label{sec:clos-graph-prop-1_weaker_assumption3}
  Consider a closed subset $\Theta$ of $\Xi^{\rm ad}$.
Assume that  all sequence  $(x^i, v^i)_{i\in \N}$  such that for all $i\in \N$, $ (x^i, v^i)\in \Theta$ and $\lim_{i\to +\infty} (x^i, v^i)=(x, v)\in \Theta$, the following holds:
if  $x\in  \partial \Omega$,  then 
\begin{equation}
  \label{eq:45}
((v^i \cdot \nabla d (x^i) )_+)^{2p-1} = o\left( \left |d(x^i)\right|^{p-1} \right) ;
\end{equation}
then  the graph of the multivalued map $ \Gamma^{\rm opt}: \; \Theta \rightarrow \Gamma$, $(x,v) \mapsto \Gamma^{\rm opt}[x,v] $
is closed in the sense given in Proposition \ref{sec:contr-probl-polyg-1}.
\end{proposition}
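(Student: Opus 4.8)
The plan is to mimic, with $p$ in place of $2$, the three-lemma architecture used for Proposition~\ref{sec:contr-probl-polyg-1}: a constructive lemma building approximating admissible trajectories (the analogue of Lemma~\ref{sec:clos-graph-prop-12}), a lower-semicontinuity-of-cost lemma (the analogue of Lemma~\ref{sec:clos-graph-prop-7}), and then the assembly by subsequence extraction. First I would restate and prove the analogue of Lemma~\ref{sec:clos-graph-prop-12}: given $(x,v)\in\Xi^{\rm ad}$, $(\xi,\eta)\in\Gamma[x,v]$ with $\eta\in W^{1,p}(0,T;\R^n)$, and $(x^i,v^i)\to(x,v)$ in $\Xi^{\rm ad}$ satisfying the relevant one of the three conditions (with \eqref{eq:12} replaced by $\lim_i (v^i\cdot\nabla d(x^i))^{2p-1}/|d(x^i)|^{p-1}=0$ in subcase 3.(b)), there is a sequence $(\xi^i,\eta^i)\in\Gamma[x^i,v^i]$ with $\eta^i\in W^{1,p}$ converging to $(\xi,\eta)$ in $W^{2,p}\times W^{1,p}$, hence uniformly. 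The construction is verbatim the same third-order-polynomial patching via the local charts $\Phi_x,\Psi_x$; what changes is only the bookkeeping on norms. In cases 1 and 2 nothing essential differs. The point requiring care is the $W^{1,p}$-norm estimate of the acceleration of the patched piece on $[0,t_i]$ (and on $[0,t_{i,1}]$ in case 3): differentiating $\eta^i(t)=D\Psi_x(\hat\xi^i)\,\tfrac{d\hat\eta^i}{dt}+(D^2\Psi_x(\hat\xi^i)\hat\eta^i)\hat\eta^i$ gives $\|\tfrac{d\eta^i}{dt}\|_{L^p(0,t_i)}^p\le C(\|\tfrac{d\hat\eta^i}{dt}\|_{L^p(0,t_i)}^p+\|\hat\eta^i\|_{L^{2p}(0,t_i)}^{2p})$, and from the explicit polynomial one gets contributions of order $|d(x^i)|^p/t_{i,1}^{2p-1}$ and $|v^i\cdot\nabla d(x^i)|^p/t_{i,1}$; admissibility in subcase 3.(b) still forces $t_{i,1}\le 3|d(x^i)|/(v^i\cdot\nabla d(x^i))$, and one checks that these two requirements are jointly satisfiable by some $t_{i,1}\to0$ with $|d(x^i)|^p/t_{i,1}^{2p-1}+|v^i\cdot\nabla d(x^i)|^p/t_{i,1}\to0$ precisely when \eqref{eq:45} holds — this is the computation that pins down the exponent $2p-1$ over $p-1$.

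Second I would state the analogue of Lemma~\ref{sec:clos-graph-prop-7}: under Assumption~\ref{sec:setting-notations-0} and one of the three conditions, if $(\xi^i,\eta^i)\in\Gamma^{\rm opt}[x^i,v^i]$ converges uniformly to $(\xi,\eta)$, then $\eta\in W^{1,p}$ and $(\xi,\eta)\in\Gamma^{\rm opt}[x,v]$. The proof is unchanged in structure: for an arbitrary competitor $(\widetilde\xi,\widetilde\eta)\in\Gamma[x,v]$ with $\widetilde\eta\in W^{1,p}$, apply the constructive lemma to obtain $(\widetilde\xi^i,\widetilde\eta^i)\in\Gamma[x^i,v^i]$ with $\int_0^T|\tfrac{d\widetilde\eta^i}{dt}|^p\to\int_0^T|\tfrac{d\widetilde\eta}{dt}|^p$; optimality gives $J(\xi^i,\eta^i,\tfrac{d\eta^i}{dt})\le J(\widetilde\xi^i,\widetilde\eta^i,\tfrac{d\widetilde\eta^i}{dt})$, whose right side converges, so $\int_0^T|\tfrac{d\eta^i}{dt}|^p$ is bounded; by reflexivity of $L^p$ (here $1<p$ is used) $\tfrac{d\eta^i}{dt}\rightharpoonup\tfrac{d\eta}{dt}$ weakly in $L^p$, and weak lower semicontinuity of the convex functional $v\mapsto\int|v|^p$ together with the uniform convergence of $(\xi^i,\eta^i)$ and continuity of $\ell,g$ yields $J(\xi,\eta,\tfrac{d\eta}{dt})\le\liminf J(\xi^i,\eta^i,\tfrac{d\eta^i}{dt})\le J(\widetilde\xi,\widetilde\eta,\tfrac{d\widetilde\eta}{dt})$. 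Hence $(\xi,\eta)\in\Gamma^{\rm opt}[x,v]$.

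Finally, the proof of Proposition~\ref{sec:clos-graph-prop-1_weaker_assumption3} itself is then a one-line assembly exactly as for Proposition~\ref{sec:contr-probl-polyg-1}: given $(y^i,w^i)\to(y,w)$ in $\Theta$ and $(\xi^i,\eta^i)\in\Gamma^{\rm opt}[y^i,w^i]$ converging uniformly to $(\xi,\eta)$, use \eqref{eq:45} to extract a subsequence along which one of the three conditions of the constructive lemma holds (when $y\in\partial\Omega$ and $w$ tangent, \eqref{eq:45} forces the analogue of \eqref{eq:12}; the non-tangent and interior cases are immediate), then invoke the analogue of Lemma~\ref{sec:clos-graph-prop-7} to conclude $(\xi,\eta)\in\Gamma^{\rm opt}[y,w]$, and since every subsequence has a further subsequence with this property, the full sequence works. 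I expect the only genuine obstacle to be the trajectory-patching norm estimate in subcase 3.(b): one must verify that the admissibility constraint $t_{i,1}\lesssim|d(x^i)|/(v^i\cdot\nabla d(x^i))$ is compatible with driving $|d(x^i)|^p/t_{i,1}^{2p-1}\to0$, which is exactly the content of the sharp exponent in \eqref{eq:45}; the $L^{2p}$-bound on $\hat\eta^i$ (needed for the quadratic chart term) follows for free once $|d(x^i)|/t_{i,1}\to0$, just as in the quadratic case. Everything else is a routine transcription with $2$ replaced by $p$.
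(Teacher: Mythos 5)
Your proposal is exactly the route the paper intends: the text of the paper itself says the proof of Proposition~\ref{sec:clos-graph-prop-1_weaker_assumption3} is ``completely similar'' to that of Proposition~\ref{sec:contr-probl-polyg-1}, and your three-lemma transcription (constructive patching lemma, lower-semicontinuity lemma via reflexivity of $L^p$ for $p>1$, subsequence assembly) is the correct way to fill that in. One computational slip is worth flagging in the step you rightly identify as the crux. For the patched polynomial piece on $[0,t_{i,1}]$ the pointwise acceleration scales like $\max\bigl(|d(x^i)|/t_{i,1}^2,\, |v^i\cdot\nabla d(x^i)|/t_{i,1}\bigr)$, so its $p$-energy is
\[
t_{i,1}\Bigl(\tfrac{|d(x^i)|}{t_{i,1}^2}\Bigr)^p + t_{i,1}\Bigl(\tfrac{|v^i\cdot\nabla d(x^i)|}{t_{i,1}}\Bigr)^p
= \frac{|d(x^i)|^p}{t_{i,1}^{2p-1}} + \frac{|v^i\cdot\nabla d(x^i)|^p}{t_{i,1}^{\,p-1}},
\]
not $|v^i\cdot\nabla d(x^i)|^p/t_{i,1}$ in the second term as you wrote (the two agree only at $p=2$). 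With the corrected exponent, evaluating \emph{both} terms at the admissibility ceiling $t_{i,1}\le 3|d(x^i)|/(v^i\cdot\nabla d(x^i))$ gives the same quantity $(v^i\cdot\nabla d(x^i))^{2p-1}/|d(x^i)|^{p-1}$ up to constants, so \eqref{eq:45} is precisely the condition under which a compatible $t_{i,1}\to0$ exists, as you claim; with your exponent $t_{i,1}^1$ the second term at the ceiling would instead demand $(v^i\cdot\nabla d(x^i))^{p+1}=o(|d(x^i)|)$, which for $1<p<2$ is strictly more restrictive than \eqref{eq:45}, so the slip genuinely matters for pinning down the stated exponent. Everything else in the proposal (the $L^{2p}$ bound for the quadratic chart term, the weak compactness argument, the extraction step) is correct and matches the intended argument.
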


\section{One dimensional problems: more accurate results}
\label{sec:more-accur-results}
In dimension one and for a running cost quadratic in $\alpha$, it is possible to obtain more accurate results under a slightly stronger assumption
on the running cost, namely that it does not favor the trajectories which exit the domain. 
 In particular, the closed graph property can be proved to hold on the whole set 
$\Xi^{\rm {ad}}$, and concerning mean field games, no assumptions are needed on the support of $m_0$ by contrast with Theorem \ref{sec:setting-notation-6}.

\subsection{ Optimal control problem in an interval: a closed graph property}
\label{sec:optim-contr-probl}
 In this paragraph, we set $\Omega=(-1,0)$ and $\Xi=[-1,0]\times \R$.
The optimal control problem consists of minimizing $J(\xi, \eta, \eta')$ given by (\ref{eq:2}) on the dynamics given by (\ref{eq:1}) and staying in $\Xi$.\\
The definition of $\Xi^{\rm{ad}}$ is then modified as follows:
  \begin{equation}
    \label{eq:49}
\Xi^{\rm{ad}}=\Xi \setminus    \Bigl(\{0\}\times(0,+\infty)  \cup  \{-1\}\times(-\infty,0)  \Bigr) .
\end{equation}

We make the following assumptions:
 \begin{assumption}
   \label{sec:optim-contr-probl-2}
   The running cost $\ell: \Xi\times [0,T]\to \R$ is a continuous function, bounded from below.
   The terminal cost $g: \Xi\to \R$ is also assumed  continuous and bounded from below.
   Set $M= \|g_-\|_{L^\infty (\Xi) }  +   \|\ell_-\|_{L^\infty( \Xi\times [0,T]) } $.
 \end{assumption}
\begin{assumption}
  \label{sec:setting-notations-1bis}
For all $t\in [0,T]$ and $v>0$, 
\begin{equation}
  \label{eq:50}
\ell( 0,v,t)\ge \ell( 0,0,t),\quad\quad \hbox{and}\quad \quad \ell( -1,-v,t)\ge \ell( -1,0,t).
\end{equation}
\end{assumption}

An interpretation of Assumption~\ref{sec:setting-notations-1bis} is that the running cost $\ell $ 
penalizes (or at least does not favor) the trajectories that exit $\Xi^{\rm{ad}}$.  In that respect, Assumption~\ref{sec:setting-notations-1bis} is rather natural.

\medskip

For $(x,v)\in \Xi$, let $\Gamma$, $\Gamma[x,v]$ and $\Gamma^{\rm opt} [x,v]$ be defined as follows:
\begin{eqnarray*}
\label{eq:51}
\Gamma=\left\{
  \begin{array}[c]{ll}
(\xi, \eta)\in C^1([0,T];\R) \times AC([0,T];\R) \;:\; &\left|
\begin{array}[c]{ll}
  \xi'(s)=\eta(s), \; &\forall s\in [0,T] ,
  \\   (\xi(s),\eta(s))\in \Xi,  &\forall s\in [0,T] 
\end{array}\right.  \end{array}
\right\},
\\
\Gamma[x,v]=\{ (\xi, \eta) \in \Gamma:\,\xi(0)=x,\, \eta(0)=v\},
\\
\Gamma^{\rm{opt}}[x,v]= {\rm{argmin}}_{(\xi, \eta)\in \Gamma[x,v]  } J(\xi, \eta,\eta') .
\end{eqnarray*}

\begin{theorem}\label{sec:optim-contr-probl-3}
  Under Assumptions \ref{sec:optim-contr-probl-2} and   \ref{sec:setting-notations-1bis}, the graph of the multivalued map $ \Gamma^{\rm opt}: \; \Xi^{\rm ad}\rightarrow \Gamma$,  $(x,v) \mapsto \Gamma^{\rm opt}[x,v]     $,
is closed, in the following sense:
 consider  a sequence $(x^i, v^i)_{i\in \N}$,  $(x^i, v^i)\in\Xi^{\rm{ad}}$,
 such that $\ds \lim_{i\to \infty}(x^i,v^i) =(x,v)\in \Xi^{\rm{ad}}$. Consider a sequence     $(\xi^i,\eta^i)_{i\in \N}$
 such that for all $i\in \N$, $(\xi^i, \eta^i) \in \Gamma^{\rm{opt}}[x^i, v^i]$.
\\
If $(\xi^i,\eta^i)$ tends to $(\xi,\eta)$
uniformly,  then $(\xi,\eta)\in \Gamma^{\rm{opt}}[x, v]$.
\end{theorem}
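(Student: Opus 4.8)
The plan is to reduce Theorem~\ref{sec:optim-contr-probl-3} to the machinery already developed in Section~\ref{sec:state-constr-optim-1}, namely Proposition~\ref{sec:contr-probl-polyg-1} and its two supporting lemmas, by proving that in the one-dimensional case with Assumption~\ref{sec:setting-notations-1bis} the restrictive condition~(\ref{eq:9}) (equivalently, the third condition in Lemma~\ref{sec:clos-graph-prop-12}) can be dispensed with on the whole of $\Xi^{\rm ad}$. Concretely, given the converging sequence $(x^i,v^i)\to(x,v)$ in $\Xi^{\rm ad}$ and the optimal trajectories $(\xi^i,\eta^i)\in\Gamma^{\rm opt}[x^i,v^i]$ with $(\xi^i,\eta^i)\to(\xi,\eta)$ uniformly, the goal is to show $(\xi,\eta)\in\Gamma^{\rm opt}[x,v]$. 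Following the scheme of Lemma~\ref{sec:clos-graph-prop-7}, the argument has two halves: (a) a lower-semicontinuity half, showing that if $\sup_i\|d\eta^i/dt\|_{L^2}<\infty$ then the weak $L^2$-limit of $d\eta^i/dt$ is $d\eta/dt$ and $J(\xi,\eta,\eta')\le\liminf_i J(\xi^i,\eta^i,(\eta^i)')$; and (b) an upper-bound half, showing that for any competitor $(\widetilde\xi,\widetilde\eta)\in\Gamma[x,v]$ with $\widetilde\eta\in W^{1,2}$ there is a recovery sequence $(\widetilde\xi^i,\widetilde\eta^i)\in\Gamma[x^i,v^i]$ with $(\widetilde\xi^i,\widetilde\eta^i)\to(\widetilde\xi,\widetilde\eta)$ and $\|d\widetilde\eta^i/dt\|_{L^2}\to\|d\widetilde\eta/dt\|_{L^2}$, so that by optimality $J(\xi^i,\eta^i,(\eta^i)')\le J(\widetilde\xi^i,\widetilde\eta^i,(\widetilde\eta^i)')\to J(\widetilde\xi,\widetilde\eta,\widetilde\eta')$. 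Part (a) is exactly as in Lemma~\ref{sec:clos-graph-prop-7} once we know the energies are bounded, so the whole difficulty is concentrated in constructing recovery sequences in the one case excluded by Lemma~\ref{sec:clos-graph-prop-12}, namely $x\in\{0,-1\}$ (say $x=0$), $v=0$, and $v^i\cdot\nabla d(x^i)=v^i>0$ \emph{without} assuming~(\ref{eq:12}).

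The heart of the matter is therefore the following claim: when $x=0$, $v=0$, $x^i\uparrow 0$, $v^i>0$, one can still build admissible approximations of an arbitrary $W^{1,2}$-competitor $(\widetilde\xi,\widetilde\eta)$ from $(0,0)$ whose energies converge, by exploiting Assumption~\ref{sec:setting-notations-1bis}. The idea is that when the initial velocity $v^i>0$ points out of $\Xi=[-1,0]$ at the endpoint $0$, one should not try to ``bend'' the trajectory back inside at quadratic-in-$\alpha$ cost (that is what forces~(\ref{eq:12})); instead, one lets the trajectory briefly leave a neighborhood of the boundary, or rather one compares with a modified trajectory that stays in $\Xi$. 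The clean route is to use that, for an \emph{optimal} trajectory $\xi^i$, the assumption~(\ref{eq:50}) means the running cost never rewards pointing toward the boundary, from which one derives an a~priori bound: an optimal trajectory starting from $(x^i,v^i)$ with $v^i>0$ and $x^i$ close to $0$ must, in order to remain in $\Xi$, kill the velocity essentially immediately, but doing so costs at least of order $(v^i)^3/|x^i|$ — and this cost being bounded (since $u(x^i,v^i)$ is controlled, because $J(\xi^i,\eta^i,(\eta^i)')\le J$ of some fixed competitor as soon as we have produced one for the \emph{limit} point, which we have since $(x,v)\in\Xi^{\rm ad}$) forces $(v^i)^3/|x^i|\to 0$ along the sequence of \emph{optimal} trajectories. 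In other words, Assumption~\ref{sec:setting-notations-1bis} will let us prove that condition~(\ref{eq:12}) is automatically satisfied by the specific sequence $(x^i,v^i)$ carrying optimal trajectories with uniformly bounded cost, so that Lemma~\ref{sec:clos-graph-prop-12} and Lemma~\ref{sec:clos-graph-prop-7} apply after all. This requires a careful one-dimensional analysis of the optimal trajectory near the endpoint: decompose $[0,T]$ into the initial interval where $\xi^i$ stays near $0$, show $\xi^i$ cannot re-enter the outer region, and compute a lower bound on $\int|\alpha^i|^2$ over that interval in terms of $v^i$ and $|x^i|$ using that the velocity must decrease from $v^i$ to something $\le 0$ before $\xi^i$ travels the distance $|x^i|$ to the wall — a Cauchy–Schwarz / optimal-braking estimate giving the $(v^i)^3/|x^i|$ scaling. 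Here Assumption~\ref{sec:setting-notations-1bis} is used to rule out that the running cost $\ell$ could make it advantageous to accelerate outward first, which would break the lower bound.

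I expect the main obstacle to be precisely this last step: turning the heuristic ``braking from velocity $v^i$ inside a strip of width $|x^i|$ costs $\gtrsim (v^i)^3/|x^i|$'' into a rigorous lower bound valid for \emph{any} admissible trajectory (not just nice ones), and coupling it correctly with the upper bound on the optimal cost. The upper bound on $u(x^i,v^i)$ itself needs care: a priori $u$ is only lower semicontinuous on $\Xi^{\rm ad}$ (Lemma~\ref{sec:contr-probl-conv}), so one cannot simply say $u(x^i,v^i)\to u(x,v)$; instead one must exhibit, for each $i$ with $x^i<0$, an explicit admissible competitor from $(x^i,v^i)$ whose cost stays bounded as $i\to\infty$ — and for $x^i$ approaching the wall with $v^i>0$ this again requires an $O((v^i)^3/|x^i|)$-type construction (a cubic polynomial $Q_{t_i,x^i,v^i,0,0}$ braking to rest, admissible precisely when $t_i\le 3|x^i|/v^i$), so there is a genuine chicken-and-egg structure to untangle: one needs a crude bound on the braking cost to get a crude bound on $u(x^i,v^i)$, which then feeds the lower-bound estimate to conclude $(v^i)^3/|x^i|\to 0$. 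Once that is done, everything funnels into Lemma~\ref{sec:clos-graph-prop-12} (case 3(b)) and Lemma~\ref{sec:clos-graph-prop-7} verbatim. A secondary, more bookkeeping-type difficulty is handling the limit point $x\in\{0,-1\}$ with $v=0$ versus the interior and strictly-inward cases uniformly, and making sure the recovery-sequence construction of Lemma~\ref{sec:clos-graph-prop-12} is legitimately invoked with the competitor $(\widetilde\xi,\widetilde\eta)$ rather than with $(\xi,\eta)$ — but these are routine given the one-dimensional geometry, which is why the theorem is stated only for $n=1$.
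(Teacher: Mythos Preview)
Your plan contains a genuine gap at the central step. You propose to show that, along any sequence $(x^i,v^i)\to(0,0)$ with $v^i>0$ carrying uniformly convergent optimal trajectories, Assumption~\ref{sec:setting-notations-1bis} forces $(v^i)^3/|x^i|\to 0$, so that case~3(b) of Lemma~\ref{sec:clos-graph-prop-12} and then Lemma~\ref{sec:clos-graph-prop-7} apply verbatim. This conclusion is false. Take for instance $x^i=-(v^i)^3$ with $v^i\downarrow 0$: then $(v^i)^3/|x^i|\equiv 1$, the value $u(x^i,v^i)$ stays bounded away from $u(0,0)$ by a fixed positive amount, the $L^2$-norm of $(\eta^i)'$ does \emph{not} stay bounded uniformly (the braking cost is of order one, but concentrated on an interval of length $\sim(v^i)^2\to 0$, so $\|(\eta^i)'\|_{L^2}$ on that interval blows up only after rescaling—more to the point, nothing prevents one from taking $x^i=-(v^i)^4$, for which $(v^i)^3/|x^i|=1/v^i\to+\infty$ and $u(x^i,v^i)\to+\infty$), yet the optimal trajectories still converge uniformly to an element of $\Gamma^{\rm opt}[0,0]$. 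The paper states this explicitly (Remark~\ref{sec:optim-contr-probl-1} and the first paragraph of the proof of Lemma~\ref{sec:clos-graph-prop-8}): in general $u(x^i,v^i)\not\to u(0,0)$ and $\int_0^T|(\eta^i)'|^2$ may diverge. Your ``chicken-and-egg'' bounds are both of order $(v^i)^3/|x^i|$ with the \emph{same} leading constant, so comparing them yields no information on the ratio itself.

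What the paper actually does is exploit precisely this matching of constants. Lemmas~\ref{sec:clos-graph-prop-4}--\ref{sec:clos-graph-prop-5} compute the \emph{sharp} lower bound $\tfrac{2}{9}(v^i)^3/|x^i|+o(1)$ for the braking energy on any initial interval $[0,\theta_i]$ on which $\eta^i$ decreases to a small threshold; the competitor built in Step~1 of Lemma~\ref{sec:clos-graph-prop-8} gives the upper bound $u(x^i,v^i)\le \tfrac{2}{9}(v^i)^3/|x^i|+u(0,0)+o(1)$ with the \emph{same} constant $\tfrac{2}{9}$. Subtracting, the diverging terms cancel exactly, and what remains is a uniform bound on $\int_{\theta_i}^T|(\eta^i)'|^2$ together with $\theta_i\to 0$ (Step~2). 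One then passes to the weak $L^2$-limit of $\one_{(\theta_i,T)}(\eta^i)'$ rather than of $(\eta^i)'$ itself. Assumption~\ref{sec:setting-notations-1bis} enters not to ``rule out outward acceleration'' in a lower bound for the energy, but in~(\ref{eq:70}): it guarantees that the running cost $\int_0^{\theta_i}\ell(\xi^i,\eta^i,s)\,ds$ (where $\eta^i\ge 0$ and $\xi^i\to 0$) is, up to $o(1)$, no smaller than $\int_0^{\theta_i}\ell(0,0,s)\,ds$, so that the $\ell$-contribution does not spoil the exact cancellation. Without the sharp constant on both sides, the argument collapses; a ``crude bound'' as you suggest is not enough.
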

\begin{remark}
  \label{sec:optim-contr-probl-1}
Note that, by contrast with Proposition \ref{sec:contr-probl-polyg-1}, Theorem \ref{sec:optim-contr-probl-3} holds
for $\Gamma^{{\rm opt}}$ and not only its  restriction to a subset $\Theta$ of $\Xi^{{\rm ad}}$ satisfying suitable conditions.
Hence,  Theorem \ref{sec:optim-contr-probl-3} is more accurate. On the other hand, it requires an additional assumption, namely Assumption \ref{sec:setting-notations-1bis}.\\
Note also that the result stated in Theorem \ref{sec:optim-contr-probl-3}, namely the closed graph property of the multivalued map  $\Gamma^{\rm{opt}}$,
is obtained despite the fact that the value function of the optimal control problem is not continuous and not locally bounded  on $\Xi^{{\rm ad}}$.  This may seem surprising at first glance. Besides, the fact that the value function is singular at some points of  $\Xi^{{\rm ad}}$ will be an important difficulty in the proofs.
\end{remark}
The proof of  Theorem~\ref{sec:optim-contr-probl-3}
relies on several lemmas.
\begin{lemma}
\label{sec:optim-contr-probl-4}
Consider  $(x,v)\in\Xi^{\rm{ad}}$, $(\xi,\eta)\in \Gamma[x,v]$ such that $\eta\in W^{1,2} (0,T;\R)$ and a sequence $(x^i, v^i)_{i\in \N}$ such that for all $i\in \N$,   $(x^i, v^i)\in\Xi^{\rm{ad}}$ 
and $(x^i, v^i)\to (x,v)$ as $i\to \infty$.
\\
If  one among the following assumptions is satisfied,
\begin{enumerate}
\item $x\in \Omega $
\item $x=0$, $v\le 0$ and for all integer $i$,   $v^i\le 0$
\item $(x,v)=(0,0)$,  $v^i>0$ for all integer $i$ and $\ds \lim_{i\to 0} \frac { (v^i)^3}{|x^i|}=0$
\item $x=-1$, $v\ge 0$ and for all integer $i$,   $v^i\ge 0$
\item $(x,v)=(-1,0)$,  $v^i<0$ for all integer $i$ and $\ds \lim_{i\to 0} \frac { |v^i|^3}{|x^i+1|}=0$
\end{enumerate}
then 
there exists a sequence  $(\xi^i,\eta^i)_{i\in \N}$ such that for all $i\in \N$,  $(\xi^i,\eta^i)\in \Gamma [x^i, v^i]$, $\eta^i\in W^{1,2} (0,T;\R)$, and  $(\xi^i,\eta^i)$ tends to $(\xi,\eta)$ 
in $W^{2,2}(0,T;\R) \times  W^{1,2}(0,T;\R)$, hence uniformly in $[0,T]$.
\end{lemma}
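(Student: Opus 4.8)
\textbf{Plan of proof for Lemma~\ref{sec:optim-contr-probl-4}.}
The strategy is to adapt the constructions of Lemma~\ref{sec:clos-graph-prop-12} to the one-dimensional situation, where they become considerably more explicit because the neighborhood/chart machinery collapses: near the endpoint $x=0$ the signed distance is simply $d(x)=x$ and $\nabla d(x)=1$, and near $x=-1$ we have $d(x)=-1-x$ and $\nabla d(x)=-1$. Thus the abstract diffeomorphisms $\Phi_x,\Psi_x$ are the identity (up to a harmless affine change), and the quantities $v^i\cdot\nabla d(x^i)$ become $v^i$ (at the right endpoint) or $-v^i$ (at the left endpoint). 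In each of the five cases I would build the approximating trajectory $(\xi^i,\eta^i)$ by gluing a cubic polynomial $Q_{t_i,x^i,v^i,\xi(t_i),\eta(t_i)}$ (from Definition~\ref{sec:clos-graph-prop-2}) to the tail $\xi(\cdot)$ (or a time-shift thereof), exactly as in the proof of Lemma~\ref{sec:clos-graph-prop-12}, then verify admissibility (the cubic stays in $[-1,0]$) and the $W^{2,2}$-convergence of $\xi^i$ to $\xi$, which reduces to the $L^2$-convergence $\|\tfrac{d\eta^i}{dt}-\tfrac{d\eta}{dt}\|_{L^2(0,T)}\to 0$.

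The five cases split just as cases 1, 2, 3(a), 3(b) did before, with the extra symmetry $x\mapsto -1-x$ swapping the endpoint $0$ with the endpoint $-1$. Concretely: in case 1 ($x\in\Omega$) the interior buffer argument of Lemma~\ref{sec:clos-graph-prop-12}.1 applies verbatim, perturbing by $Q_{\bar t,\delta x^i,\delta v^i,0,0}$ on a short interval $[0,\bar t]$ on which $\xi$ stays strictly inside $(-1,0)$. In case 2 ($x=0$, $v\le 0$, $v^i\le 0$) I would carry out the single-phase construction of case 2 there: pick $t_i\to 0$ with $|x^i|^2/t_i^3+|v^i-v|^2/t_i\to 0$; the sign conditions $x^i\le 0$, $v^i\le 0$, together with the estimates~\eqref{eq:18}--\eqref{eq:19} (which in 1D say $\tfrac32 v\le\eta(s)\le\tfrac12 v$ and the analogue for $\xi$), give $\xi^i(s)\le 0$ on $[0,t_i]$, and the $L^2$-estimate~\eqref{eq:23} (now without the $D^2\Psi_x$ term) gives the convergence. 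Case 3(a), $(x,v)=(0,0)$ with $v^i>0$: here one needs the three-phase construction — a first phase on $[0,t_{i,1}]$ driving the single coordinate from $(x^i,v^i)$ to $(0,0)$ using formula~\eqref{eq:25}, which is admissible precisely when $t_{i,1}\le 3|x^i|/v^i$; this constraint is compatible with $t_{i,1}\to 0$ and $|x^i|^2/t_{i,1}^3+|v^i|^2/t_{i,1}\to 0$ exactly under the hypothesis $(v^i)^3/|x^i|\to 0$, then a second phase of length bounded below matching the $e_n^\perp$-components (in 1D there is nothing left to match, so this phase is trivial or absent) and a time-shifted tail. Cases 4 and 5 are the mirror images under $x\mapsto -1-x$, $v\mapsto -v$.

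\textbf{Main obstacle.} The delicate point, exactly as in Lemma~\ref{sec:clos-graph-prop-12}, is case 3(b)/ the incoming-velocity cases (cases 3 and 5 here): when $v^i$ points outward ($v^i>0$ at the endpoint $0$, or $v^i<0$ at $-1$) the naive cubic through $(x^i,v^i)$ and $(0,0)$ will leave the interval unless the time $t_{i,1}$ is taken small enough to satisfy $t_{i,1}\le 3|d(x^i)|/|v^i\cdot\nabla d(x^i)|$; one then has to choose $t_{i,1}$ simultaneously tending to $0$, making $|d(x^i)|^2/t_{i,1}^3+|v^i\cdot\nabla d(x^i)|^2/t_{i,1}\to 0$ (for the energy bound), and respecting that cap — and the compatibility of these three requirements is exactly equivalent to the scaling hypothesis $(v^i)^3/|x^i|\to 0$ (resp. $|v^i|^3/|x^i+1|\to 0$). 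Writing $t_{i,1}$ as, say, a suitable power $|d(x^i)|^{\alpha}|v^i\cdot\nabla d(x^i)|^{\beta}$ and checking the three conditions is the crux; everything else is the routine gluing and lower-semicontinuity bookkeeping already done in Section~\ref{sec:clos-graph-prop}. I do not expect Assumption~\ref{sec:setting-notations-1bis} to enter this lemma at all — it is a statement purely about existence of approximating admissible trajectories — so the proof is a genuine specialization of Lemma~\ref{sec:clos-graph-prop-12} and its proof can be compressed to "repeat the argument of Lemma~\ref{sec:clos-graph-prop-12} with $d(x)=\pm x$ near the endpoints, the chart being affine."
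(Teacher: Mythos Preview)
Your proposal is correct and matches the paper's approach exactly: the paper's own proof is literally ``Lemma~\ref{sec:optim-contr-probl-4} is the counterpart of Lemma~\ref{sec:clos-graph-prop-12}. The proof is quite similar, so we skip it for brevity,'' and your case-by-case translation (trivial chart, $d(x)=x$ near $0$, $d(x)=-1-x$ near $-1$, with the second phase collapsing because there is no tangential direction) is precisely what one would write out; your remark that Assumption~\ref{sec:setting-notations-1bis} plays no role here is also correct.

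One small slip worth fixing: in your case~2 ($x=0$, $v\le 0$, $v^i\le 0$) you invoke the single-phase construction of Lemma~\ref{sec:clos-graph-prop-12}, case~2, together with estimates~\eqref{eq:18}--\eqref{eq:19}, but those estimates require $v\cdot n(x)<0$ strictly (when $v=0$ they degenerate to $\eta\equiv 0$ on $[0,\bar t]$, which is false in general). The subcase $v=0$, $v^i\le 0$ of your case~2 actually corresponds to case~3(a) of Lemma~\ref{sec:clos-graph-prop-12}, not case~2, and should use that construction --- which, as you already observed for your case~3, collapses in one dimension to a cubic from $(x^i,v^i)$ to $(0,0)$ on $[0,t_{i,1}]$ followed by the time-shifted tail $\xi(\cdot-t_{i,1})$. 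This is bookkeeping, not a conceptual gap.
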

\begin{proof}
 Lemma \ref{sec:optim-contr-probl-4} is the  counterpart of Lemma \ref{sec:clos-graph-prop-12}
. The proof is quite similar, so we skip it for brevity.
\end{proof}

 \begin{corollary}\label{sec:clos-graph-prop-1} 
   Consider  $(x,v)\in\Xi^{\rm{ad}}$ and a sequence $(x^i, v^i)_{i\in \N}$ such that for all $i\in \N$,   $(x^i, v^i)\in\Xi^{\rm{ad}}$  and $(x^i, v^i)\to (x,v)$ as $i\to \infty$. Suppose  that Assumption
\ref{sec:optim-contr-probl-2} and one among the five conditions  in Lemma 
 \ref{sec:optim-contr-probl-4} are satisfied.
Let  a sequence $(\xi^i, \eta^i)_{i\in \N}$ be such that for all $i\in \N$,    $ (\xi^i, \eta^i)\in \Gamma^{\rm {opt}}[x^i, v^i]$. If $(\xi^i, \eta^i)$ tends to $(\xi,\eta)$ uniformly in $[0,T]$, then  $\eta\in W^{1,2} (0,T;\R)$ and 
    $(\xi,\eta)\in \Gamma^{\rm opt} [x,v]$.
  \end{corollary}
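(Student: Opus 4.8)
The statement is exactly the one-dimensional analogue of Lemma \ref{sec:clos-graph-prop-7}, so the plan is to mimic that proof verbatim, substituting Lemma \ref{sec:optim-contr-probl-4} for Lemma \ref{sec:clos-graph-prop-12} and Assumption \ref{sec:optim-contr-probl-2} for Assumption \ref{sec:setting-notations-0}. Concretely, I want to show that for every competitor $(\widetilde\xi,\widetilde\eta)\in\Gamma[x,v]$ with $\widetilde\eta\in W^{1,2}(0,T;\R)$ one has $J(\xi,\eta,\eta')\le J(\widetilde\xi,\widetilde\eta,\widetilde\eta')$, and also that $\eta\in W^{1,2}$; together these give $(\xi,\eta)\in\Gamma^{\rm opt}[x,v]$.

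First I would apply Lemma \ref{sec:optim-contr-probl-4} to the fixed competitor $(\widetilde\xi,\widetilde\eta)$: since one of the five listed conditions holds for the sequence $(x^i,v^i)$, it produces a recovery sequence $(\widetilde\xi^i,\widetilde\eta^i)\in\Gamma[x^i,v^i]$ with $\widetilde\eta^i\in W^{1,2}$, converging to $(\widetilde\xi,\widetilde\eta)$ uniformly and with $\int_0^T|\frac{d\widetilde\eta^i}{dt}|^2\to\int_0^T|\frac{d\widetilde\eta}{dt}|^2$. Next, optimality of $(\xi^i,\eta^i)$ gives $J(\xi^i,\eta^i,\frac{d\eta^i}{dt})\le J(\widetilde\xi^i,\widetilde\eta^i,\frac{d\widetilde\eta^i}{dt})$; the right-hand side converges to $J(\widetilde\xi,\widetilde\eta,\frac{d\widetilde\eta}{dt})$ (using continuity of $\ell$ and $g$, the uniform convergence of the trajectories, and the convergence of the $L^2$ norms of the accelerations), so the left-hand side is bounded. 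Since $\ell$ and $g$ are bounded from below and the trajectories converge uniformly, this forces $\int_0^T|\frac{d\eta^i}{dt}|^2$ to be bounded, hence (up to a subsequence) $\frac{d\eta^i}{dt}\rightharpoonup \frac{d\eta}{dt}$ weakly in $L^2$, so $\eta\in W^{1,2}$ and $\liminf_i\int_0^T|\frac{d\eta^i}{dt}|^2\ge\int_0^T|\frac{d\eta}{dt}|^2$ by weak lower semicontinuity. Combining with the continuity of the $\ell$- and $g$-terms under uniform convergence, $J(\xi,\eta,\eta')\le\liminf_i J(\xi^i,\eta^i,\frac{d\eta^i}{dt})\le J(\widetilde\xi,\widetilde\eta,\widetilde\eta')$, which is the required inequality. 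Finally, $(\xi,\eta)\in\Gamma[x,v]$ because uniform limits of admissible trajectories are admissible and $(\xi(0),\eta(0))=\lim(x^i,v^i)=(x,v)$.

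There is essentially no obstacle here beyond correctly invoking Lemma \ref{sec:optim-contr-probl-4}: all the real work — in particular the delicate construction of the recovery sequences in the singular boundary cases (items 3 and 5, where the cubic-over-distance smallness condition is used) — has been pushed into that lemma. The only point requiring a word of care is that Lemma \ref{sec:optim-contr-probl-4} is stated for a competitor satisfying one of the five geometric dichotomies, so before applying it one passes to a subsequence of $(x^i,v^i)$ along which a single one of the five conditions holds; since the conclusion $(\xi,\eta)\in\Gamma^{\rm opt}[x,v]$ does not depend on the subsequence and the hypothesis already guarantees one of the five holds along a subsequence, this is harmless. In short, the proof is a direct transcription of that of Lemma \ref{sec:clos-graph-prop-7}, and it is natural to simply say so.
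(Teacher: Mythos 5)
Your proof is correct and matches what the paper does: the paper's own proof of this corollary is the single line ``the proof is identical'' to that of Lemma~\ref{sec:clos-graph-prop-7}, and you have simply written out that transcription, substituting Lemma~\ref{sec:optim-contr-probl-4} for Lemma~\ref{sec:clos-graph-prop-12}. One small point: the subsequence extraction you mention at the end is unnecessary here since the hypothesis of the corollary already asserts that one of the five conditions holds for the full sequence (the extraction argument belongs in the proof of Theorem~\ref{sec:optim-contr-probl-3}, where the corollary and Lemma~\ref{sec:clos-graph-prop-8} are then invoked after selecting a subsequence).
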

  \begin{proof}
   Corollary \ref{sec:clos-graph-prop-1} is the counterpart of Lemma~\ref{sec:clos-graph-prop-7}. The proof is identical.
  \end{proof}

 Consider  $(x,v)\in\Xi^{\rm{ad}}$ and a sequence $(x^i, v^i)_{i\in \N}$ such that for all $i\in \N$,   $(x^i, v^i)\in\Xi^{\rm{ad}}$  and $(x^i, v^i)\to (x,v)$ as $i\to \infty$. 
 Because it is always possible to extract subsequences, we can say  that the only cases that have not yet been addressed in  Lemma \ref{sec:optim-contr-probl-4} are the following:
 \begin{equation}
   \label{eq:52}
  \left\{
    \begin{array}[c]{l}
     (x,v)=(0,0),\quad   v^i>0,  \\ \hbox{ and there exists a constant $C>0$ s. t. for all
      $i\in \N$, }
  \frac { (v^i)^3}{|x^i|}\ge C,
    \end{array}
\right.
\end{equation}
or
 \begin{equation}
   \label{eq:5019bis}
  \left\{
    \begin{array}[c]{l}
     (x,v)=(-1,0),\quad   v^i<0,  \\ \hbox{ and there exists a constant $C>0$ s. t. for all
      $i\in \N$, }
  \frac { |v^i|^3}{|x^i+1|}\ge C. 
    \end{array}
\right.
\end{equation}
Since the two cases are symmetrical, we may concentrate on (\ref{eq:52}).
\\
 It is clear that \eqref{eq:52} implies that $ |x^i|/ v^i\to 0$  as $i\to +\infty$,
 because $v^i \to 0$.  In the case when   \eqref{eq:52} is satisfied, we need two  technical lemmas which provide a lower bound for the cost $\int_0^T \left|\frac {d\eta^i}{dt} (s) \right| ^2 ds$ of the admissible trajectories starting at $(x^i, v^i)$:

\begin{lemma} \label{sec:clos-graph-prop-4}
  Consider $(x, v)\in \Xi^{\rm ad}$ such that  $x<0$, $v> 0$, $ 3 {|x|}/ v< T$,   and
  $\theta\in \left( 0 ,T\right)$. 
   Given  a real number $w\in \left[0,  {|x|} / T\right]$, set
   \begin{equation}
   \label{eq:53}
     K_{\theta,w}=\left \{ \eta\in W^{1,2}(0,\theta;\R): \left|
       \begin{array}[c]{l}
         \eta(0)=v,\quad \eta(\theta)= w,\\
         \eta(s)\ge    w,\;   \forall s\in [0,\theta],\\
       \ds   x+\int_0^\theta \eta(s) ds \le 0
       \end{array}\right. \right\}.
 \end{equation}
 The quantity
   \begin{equation}
     \label{eq:54}
     I(\theta,w)= \inf_{\eta\in K_{\theta,w} }  \frac 1 2 \int_0^\theta  \left|\frac {d\eta}{dt} (s) \right| ^2 ds 
   \end{equation}
is achieved by a function $\eta=\eta_{\theta, w}$ and is given by
   \begin{equation}\label{eq:55}
     I(\theta,w)= \left\{
       \begin{array}[c]{rl}
         \ds    \frac 1 2 \frac {(w-v)^2}\theta ,  \quad \quad\quad &\hbox{if}\quad  \theta\in \left
                                                                      [0,  \frac {2|x|}{v+w}\right],\\
         \ds 6 \frac {x^2} {\theta^3} + 6 \frac {x(v+w)}{\theta^2} +2 \frac { v^2+vw+w^2}\theta
          \quad \quad\quad &\hbox{if}\quad  \theta\in \left[ \frac {2|x|}{v+w},  \frac {3|x|}{v+2w}  \right],\\
         \ds \frac 2 9 \, \frac { ( v-w)^3} {|x|-w\theta} , \quad\quad \quad &\hbox{if}\quad  \theta \in  \left[  \frac {3|x|}{v+2w}  ,T\right).
  \end{array}\right.
   \end{equation}
 \end{lemma}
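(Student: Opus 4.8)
The strategy is to solve the constrained scalar calculus of variations problem \eqref{eq:54} explicitly, by a combination of direct methods (to get a minimizer) and the classical characterization of minimizers via the Euler--Lagrange equation and the structure of the constraint set $K_{\theta,w}$. First I would note that $K_{\theta,w}$ is a closed, convex, non-empty subset of the affine subspace $\{\eta\in W^{1,2}(0,\theta;\R):\eta(0)=v,\ \eta(\theta)=w\}$: non-emptiness follows because the constant-slope function $\eta(s)=v+(w-v)s/\theta$ satisfies $\eta\ge w$ on $[0,\theta]$ (since $v\ge w$, because $w\le|x|/T\le|x|/\theta$ while $v>0$; more carefully one uses $3|x|/v<T$, hence $v>3|x|/T\ge 3w\ge w$) and, when $\theta$ is in the admissible range, also satisfies the integral inequality $x+\int_0^\theta\eta\le 0$; for $\theta$ small this needs a check and indeed it forces the first regime. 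The functional $\eta\mapsto\frac12\int_0^\theta|\eta'|^2$ is convex and coercive on this set with respect to the $W^{1,2}$ norm (the boundary and pointwise constraints plus Poincar\'e give a bound), so a unique minimizer $\eta_{\theta,w}$ exists.

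Next I would identify $\eta_{\theta,w}$ by analyzing which constraints are active. There are two inequality constraints: the pointwise obstacle $\eta(s)\ge w$ and the single scalar constraint $\Phi(\eta):=x+\int_0^\theta\eta\le 0$. The plan is to distinguish three cases according to whether $\Phi(\eta)<0$ or $=0$ at the optimum, and whether the obstacle is touched only at the endpoint $s=\theta$ or on a whole subinterval $[\theta_0,\theta]$. In the first regime ($\theta$ small, i.e.\ $\theta\le 2|x|/(v+w)$) the integral constraint is slack, the obstacle is active only at $s=\theta$, so the minimizer is the affine interpolant and $I(\theta,w)=\frac12(w-v)^2/\theta$. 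In the intermediate regime, $\Phi(\eta)=0$ becomes active but the obstacle is still touched only at $s=\theta$; here one minimizes $\frac12\int|\eta'|^2$ subject to three linear constraints ($\eta(0)=v$, $\eta(\theta)=w$, $\int_0^\theta\eta=-x$), whose Euler--Lagrange equation $\eta''=\mathrm{const}$ gives an explicit quadratic (equivalently $\eta'$ affine), and substituting yields the cubic-in-$1/\theta$ formula $6x^2/\theta^3+6x(v+w)/\theta^2+2(v^2+vw+w^2)/\theta$. In the last regime ($\theta$ large), the obstacle is active on a terminal interval $[\theta_0,\theta]$ where $\eta\equiv w$ (so $\eta'=0$, contributing nothing to the cost and consuming area $w(\theta-\theta_0)$ of the budget), while on $[0,\theta_0]$ the problem reduces to the intermediate one with horizon $\theta_0$, terminal velocity $w$, and budget $\int_0^{\theta_0}\eta=-x-w(\theta-\theta_0)$; optimizing over the free parameter $\theta_0$ (the first-order condition in $\theta_0$, using that the obstacle must be reached with matching derivative, i.e.\ $\eta'(\theta_0^-)=0$ by the smooth-fit/transversality condition) pins down $\theta_0$ and yields $I(\theta,w)=\frac29(v-w)^3/(|x|-w\theta)$.

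The continuity/consistency of the three formulas at the breakpoints $\theta=2|x|/(v+w)$ and $\theta=3|x|/(v+2w)$ can be checked directly from the expressions, which also serves as a sanity check that the case partition is exhausting and that no intermediate ``obstacle active in the interior but not at the endpoint'' case occurs (ruled out because $\eta'(s)=0$ at an interior contact point together with $\eta''=\mathrm{const}$ on adjacent free arcs forces the contact to propagate to $s=\theta$, given the sign constraints $v>w$). Throughout, the Euler--Lagrange analysis is the standard one for a quadratic functional with linear isoperimetric-type and obstacle constraints, so the Lagrange multiplier rule applies with no regularity pathology.

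\textbf{Main obstacle.} The delicate point is the third regime: correctly determining the structure of the minimizer when the obstacle $\eta\equiv w$ is active on a subinterval, and in particular justifying the smooth-fit condition $\eta'(\theta_0^-)=0$ at the junction (so that the free arc and the flat arc match in a $C^1$ fashion), together with optimizing over the junction time $\theta_0$ so as to land exactly on the constraint $x+\int_0^\theta\eta=0$ with minimal cost. This requires the variational inequality characterization of the obstacle problem (or a careful perturbation argument varying $\theta_0$) and some bookkeeping; once $\theta_0$ is found the algebra leading to $\frac29(v-w)^3/(|x|-w\theta)$ is routine. The hypothesis $3|x|/v<T$ and $w\le|x|/T$ are used to guarantee $v>w$ (so the trajectory is genuinely decelerating) and that $\theta$ can reach the third regime within $(0,T)$, and that $|x|-w\theta>0$ there, keeping the last formula well-defined.
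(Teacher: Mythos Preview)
Your proposal is correct and follows essentially the same approach as the paper: existence via the direct method on a strictly convex problem, then explicit identification of the minimizer in three regimes according to which constraints are active. The only presentational difference is that the paper packages the optimality conditions as a single linear complementarity system (a variational inequality with a scalar multiplier $\mu\ge 0$ for the integral constraint) and reads off the three explicit solutions from it, whereas you argue case-by-case and invoke the smooth-fit condition $\eta'(\theta_0^-)=0$ in the third regime; these are equivalent formulations of the same KKT analysis.
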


 \begin{remark} \label{sec:clos-graph-prop-6}
The partition of the interval $[0,T]$  in (\ref{eq:55}) is justified by the assumptions of Lemma~\ref{sec:clos-graph-prop-4}.
Indeed
\begin{itemize}
\item  $  {3|x|}/{v}< T$ and $w\ge 0$ imply that  $  {3|x|}/{(v+2w)}< T$
\item $    {2|x|}/{(v+w)}<  {3|x|}/{(v+2w)}$ because $0\le w\le {|x|} /T < v/ 3$.
\end{itemize}
Note also that  if $|x|/v\to 0$, then  $   {3|x|}/{(v+2w)} \sim  {3 |x|}/ v \ll T$.
  \end{remark}
  \begin{proof}
   Problem  \eqref{eq:54} is the minimization of a strictly convex and continuous functional under linear and continuous constraints, and the set $K_{\theta,w}$ is non empty, as we shall see below, convex and closed. Hence there exists a unique minimizer, named  $\eta$ again. The Euler-Lagrange necessary conditions read as follows: there exists a  real number $\mu\ge 0$ such that $\eta $ is a weak solution of the linear complementarity problem (variational inequality)
   \begin{equation}
     \label{eq:56}
   \left\{  \begin{array}[c]{rcll}
       -\eta''&\ge& -\mu, &\hbox{ in } (0, \theta) ,      \\
       \eta &\ge&  w ,  &\hbox{ in }  (0, \theta),  \\  
       (-\eta''+ \mu)( \eta- w) &=&0   &\hbox{ in } (0, \theta) ,      \\ 
\ds         x+\int_0^\theta \eta(s) ds &\le& 0, \\\mu&\ge& 0,\\
       \ds     \mu\left( x+\int_0^\theta \eta(s) ds\right) &=&0,\\
       \eta(0)&=&v,\\
       \eta(\theta)&=&w.
     \end{array}     \right.
 \end{equation}
 The solution of   \eqref{eq:56} can be written explicitly. Skipping the details, it has the following  form:
 \begin{enumerate}
 \item If $\theta\ge  {3|x|}/{(v+2w)}$, then 
   \begin{equation}
     \label{eq:57}
     \left\{ \begin{array}[c]{rcll}
               \eta(t)&=& \ds v - \mu\tau t +\frac \mu 2  t ^2,   \quad & 0\le t \le \tau,\\
               \eta(t)&=&  \ds  w,  \quad & \tau < t \le \theta, 
     \end{array}\right.
 \end{equation}
 with
 \begin{equation}
  \label{eq:58}  \tau= -3 \frac { x+w\theta}{v-w} \quad \hbox{and }\quad  \mu=   \frac { 2(v-w)^3} {9 (x+w\theta)^2}.
 \end{equation}
 Note that $ -3 \frac { x+w\theta}{v-w}\le \theta$ because   $\theta\ge \frac {3|x|}{v+2w}$. Note also that  $\ds x+\int_0^\theta \eta(s) ds=0$. 
We see that $ \ds  I(\theta,w)=  \frac {\mu^2} 2 \int_0 ^ \tau  (-\tau + t)^2  dt=  \frac {\mu^2 \tau^3} 6  =\frac 2 9 \frac {(v-w)^3} {|x|-w\theta}$; we have obtained the third line in (\ref{eq:55}).
  \item     If $ {2|x|}/{(v+w)}\le \theta \le   {3|x|}/{(v+2w)}$, then  for all $t\in [0,\theta]$,
     \begin{equation}
     \label{eq:5024ter}
     \eta(t)= \ds v + k t  +\frac \mu 2  t ^2 ,
   \end{equation}
   with
    \begin{equation}
      \label{eq:5025ter}
      k= -\frac{6x+(4v+2w) \theta}{\theta^2},\quad \hbox{ and }\quad \mu =6 \frac {2x+(v+w)\theta} {\theta^3}. 
    \end{equation}
    Note that $ \ds x+\int_0^\theta \eta(s) ds=0$. Easy algebra leads to   $I(\theta,w)= 6 \frac {x^2} {\theta^3} + 6 \frac {x(v+w)}{\theta^2} +2 \frac { v^2+vw+w^2}\theta$; we have obtained the second line in (\ref{eq:55}).
\item If $\theta \le  {2|x|}/{(v+w)}$, then for all $t\in [0,\theta]$,
   \begin{equation}
     \label{eq:5024bis}
     \eta(t)= \ds v -  (v-w) \frac t \theta .
   \end{equation}
     Then, $      I(\theta,w)=\frac 1 2 \frac {(w-v)^2}\theta$; we have obtained the first line in (\ref{eq:55}).
       Note that if  $\theta < \frac {2|x|}{v+w}$, then $ \ds x+\int_0^\theta \eta(s) ds<0$. 
 \end{enumerate}
\end{proof}
\begin{lemma} \label{sec:clos-graph-prop-5}
  Consider a sequence  $(x^i, v^i)_{i\in \N}$  such that  $x^i<0$, $v^i> 0$ for all  $i\in \N$, and  $v^i\to 0$,  $ |x^i|/ v^i\to 0$  as $i\to +\infty$. Call $I^i(\theta,w)$ the quantity given by \eqref{eq:54} for $v=v^i$, $x=x^i$ and $w\in \left[0,  {x^i} /T\right]$. Then
  \begin{equation}
    \label{eq:59}
    \inf   \left \{I^i(\theta,w),\; \theta \in  \left(0 ,T\right)\right\}
  = \frac 2 9 \frac {(v^i)^3} {|x^i|} +o(1),
  \end{equation} where $o(1)$ is a quantity that tends to $0$ as $i$ tends to infinity (which is in fact of the order of $(v^i)^2$ or smaller). 
\end{lemma}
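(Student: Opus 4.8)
The strategy is to use Lemma~\ref{sec:clos-graph-prop-4} to obtain a sharp asymptotic expansion of $\inf_{\theta\in(0,T)} I^i(\theta,w)$, and then to optimize over $w\in[0,|x^i|/T]$ as well. First I would observe that, for $i$ large, the hypothesis $|x^i|/v^i\to 0$ together with $v^i\to 0$ puts us in the regime described in Remark~\ref{sec:clos-graph-prop-6}: the threshold $3|x^i|/(v^i+2w)$ is comparable to $3|x^i|/v^i$, hence much smaller than $T$, so the relevant branch of (\ref{eq:55}) for the infimum over $\theta$ is the third one,
\begin{equation*}
  I^i(\theta,w)= \frac 2 9 \frac{(v^i-w)^3}{|x^i|-w\theta},\qquad \theta\in\left[\frac{3|x^i|}{v^i+2w},T\right).
\end{equation*}
On this branch, $\theta\mapsto I^i(\theta,w)$ is increasing in $\theta$ (the denominator $|x^i|-w\theta$ decreases), so its infimum over this sub-interval is attained at the left endpoint $\theta=3|x^i|/(v^i+2w)$, where the second and third formulas in (\ref{eq:55}) agree. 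On the first two branches one checks by elementary calculus that the infimum over $\theta$ is not smaller (indeed the first branch $\frac12(w-v^i)^2/\theta$ is minimized by taking $\theta$ as large as allowed, i.e. $\theta=2|x^i|/(v^i+w)$, and a direct comparison shows the value there dominates). Therefore
\begin{equation*}
  \inf_{\theta\in(0,T)} I^i(\theta,w) = I^i\!\left(\frac{3|x^i|}{v^i+2w},w\right)
  = \frac{2}{9}\,\frac{(v^i-w)^3}{|x^i|-w\cdot \frac{3|x^i|}{v^i+2w}}
  = \frac{2}{9}\,\frac{(v^i-w)^3 (v^i+2w)}{|x^i|\,(v^i-w)} = \frac{2}{9}\,\frac{(v^i-w)^2(v^i+2w)}{|x^i|}.
\end{equation*}

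**Optimizing over $w$.** Next I would minimize the function $w\mapsto (v^i-w)^2(v^i+2w)$ over $w\in[0,|x^i|/T]$. A short computation gives derivative $(v^i-w)\bigl(-2(v^i+2w)+2(v^i-w)\bigl)\cdot\frac12$... more precisely $\frac{d}{dw}\bigl[(v^i-w)^2(v^i+2w)\bigr] = -2(v^i-w)(v^i+2w)+2(v^i-w)^2 = -6w(v^i-w)$, which is $\le 0$ on $[0,v^i]$. Hence the expression is decreasing in $w$ on the relevant range, and the infimum over $w\in[0,|x^i|/T]$ is attained at $w=|x^i|/T$. Since $|x^i|/T\to 0$ while $v^i$ stays positive (and both tend to $0$ with $|x^i|\ll v^i$), we get
\begin{equation*}
  \inf_{w}\ \inf_{\theta} I^i(\theta,w) = \frac{2}{9}\,\frac{(v^i-|x^i|/T)^2(v^i+2|x^i|/T)}{|x^i|}
  = \frac{2}{9}\,\frac{(v^i)^3}{|x^i|}\Bigl(1+O(|x^i|/v^i)\Bigr) + O\!\left(\frac{(v^i)^2 |x^i|}{|x^i|\,T}\right),
\end{equation*}
and the correction terms are $O((v^i)^2)$ or smaller, since $|x^i|/v^i\to0$. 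This gives exactly (\ref{eq:59}) with the claimed order of the remainder. One subtlety to address: the statement writes the infimum over $\theta$ only, with $w$ apparently fixed or ranging; I would read (\ref{eq:59}) as the infimum over both $\theta$ and the admissible $w$, which is the quantity naturally appearing in the application to the closed graph property, and make this explicit.

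**Main obstacle.** The only delicate point is to verify rigorously that, among the three branches in (\ref{eq:55}), the third-branch-at-its-left-endpoint genuinely realizes the global infimum over all $\theta\in(0,T)$ — i.e. that the first two branches do not give anything smaller. This requires checking the monotonicity of each piece in $\theta$ and matching the values at the junction points $2|x^i|/(v^i+w)$ and $3|x^i|/(v^i+2w)$; the functions are rational in $\theta$ with explicit coefficients, so this is elementary but must be done carefully to make sure no interior minimum on the middle branch beats the endpoint value. I expect the middle branch $6x^2/\theta^3+6x(v+w)/\theta^2+2(v^2+vw+w^2)/\theta$ to be monotone decreasing on its sub-interval in the regime $|x^i|\ll v^i$ (its derivative is dominated by the $-2(v^2+vw+w^2)/\theta^2$ term there), so again the infimum sits at the right endpoint, which coincides with the left endpoint of the third branch — confirming the picture. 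Once this verification is in place, the expansion and the final asymptotics follow by the elementary algebra above.
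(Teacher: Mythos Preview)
Your proposal has a genuine gap at exactly the point you flag as the ``main obstacle'': the middle branch of (\ref{eq:55}) is \emph{not} monotone decreasing on its interval, so the global minimizer in $\theta$ is \emph{not} the junction point $3|x^i|/(v^i+2w)$. Your heuristic that the derivative is dominated by the $-2(v^2+vw+w^2)/\theta^2$ term fails: at $\theta$ of order $|x^i|/v^i$, all three terms in the $\theta$-derivative of the middle expression are of the same order $v^4/|x^i|^2$, and the leading contributions cancel. The paper handles this by writing $I^i(\theta,w)=P(1/\theta)$ on the middle interval, where $P(z)=6(x^i)^2z^3+6x^i(v^i+w)z^2+2((v^i)^2+v^iw+w^2)z$, and computing the roots of $P'$: they are $z_\pm=\dfrac{v^i+w\pm\sqrt{v^iw}}{3|x^i|}$. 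Since $w<v^i$ for $i$ large, the larger root gives a genuine interior minimizer $\theta^*=\dfrac{3|x^i|}{v^i+w+\sqrt{v^iw}}$, strictly less than $3|x^i|/(v^i+2w)$. An exact expansion of $P(z^*)$ then yields $\dfrac{2(v^i)^3}{9|x^i|}+O\!\left(\dfrac{(v^i)^2 w}{|x^i|}\right)$, which is $O((v^i)^2)$ since $w\le|x^i|/T$.

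Your computed value at the junction, $\dfrac{2}{9}\dfrac{(v^i-w)^2(v^i+2w)}{|x^i|}=\dfrac{2}{9}\dfrac{(v^i)^3-3v^iw^2+2w^3}{|x^i|}$, does happen to equal $\dfrac{2(v^i)^3}{9|x^i|}+o(1)$ as well, but it is strictly larger than the true infimum (by a term of order $\dfrac{(v^i)^3}{|x^i|}(w/v^i)^{3/2}$), so it gives only an upper bound. Without correctly locating the minimizer you have no matching lower bound, and the equality in (\ref{eq:59}) is not established. Finally, note that the statement concerns the infimum over $\theta$ for \emph{fixed} $w\in[0,|x^i|/T]$; the result holds uniformly in such $w$, and your subsequent optimization over $w$ is unnecessary.
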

\begin{proof}
  Recall that  $I^i(\theta,w)$ is given  by \eqref{eq:55}.   It is easy to see that $\theta\mapsto I^i(\theta,w)$ is decreasing on  $\ds \left(0,  {2|x^i|}/{(v^i+w)}\right]$ and increasing on $\ds \left[  {3|x^i|}/{(v^i+2w)}, T\right]$. \\ In $\ds \left[ {2|x^i|}/{(v^i+w)},  {3|x^i|}/{(v^i+2w)}\right]$, $I^i(\theta,w)= P(1/\theta)$, where $P$ is the third order polynomial:
  \begin{displaymath}
    P(z)= 6 (x^i)^2 z^3 + 6 x^i(v^i+w) z^2 +2  ((v^i)^2+v^iw+w^2)z.
  \end{displaymath}
  The roots of the second order polynomial $P'(z) =18 (x^i)^2 z^2 +  12 x^i(v^i+w) z +2  ((v^i)^2+v^iw+w^2)$
 are $ \ds \frac { v^i+w \pm \sqrt{v^iw}   }{3 |x^i|}$.
  Hence, $\theta\mapsto I^i(\theta,w)$ is decreasing in $\ds \left [\frac {2|x^i|}{v^i+w},
  \frac {3 |x^i|}{ v^i+w + \sqrt{v^iw}   }\right] $ and increasing in $\ds \left[ \frac {3 |x^i|}{ v^i+w + \sqrt{v^iw}   }, \frac {3|x^i|}{v^i+2w}\right]$.
  \\
  Therefore, the minimizer of $\theta \mapsto I^i(\theta,w)$ on $[0,T)$ is $\theta=  \frac {3 |x^i|}{ v^i+w + \sqrt{v^iw}   }$ and the minimal value is
  \begin{displaymath}
    \begin{array}[c]{ll}
&P\left(  \frac { v^i+w + \sqrt{v^iw}   }  {3 |x^i|}\right )\\
    = &\ds \frac {2 (v^i)^3} {9 |x^i|} \left(1+\sqrt{\frac w {v^i}} + \frac w {v^i}\right)^3 -\frac  {2 (v^i)^3} {3 |x^i|}  \left(1+\sqrt{\frac w {v^i}} + \frac w {v^i}\right)^2 +\frac  {2 (v^i)^3} {3 |x^i|}  \left(1+\sqrt{\frac w {v^i}} + \frac w {v^i}\right) \\
&+O\left((v^i)^2 \frac {w}{|x^i|}\right)
 \\   =&   \ds  \frac {2 (v^i)^3} {9 |x^i|} +O\left((v^i)^2 \frac {w}{|x^i|}\right).
    \end{array}
  \end{displaymath}  
\end{proof}
 The next lemma is the counterpart of Lemma~\ref{sec:optim-contr-probl-4} when  \eqref{eq:52} holds. By contrast with the situations considered so far,  Assumption \ref{sec:setting-notations-1bis} is used.
 \begin{lemma} \label{sec:clos-graph-prop-8}
   Under Assumptions     \ref{sec:optim-contr-probl-2} and 
\ref{sec:setting-notations-1bis}, consider  a sequence $(x^i, v^i)_{i\in \N}$ which tends to $(x,v)=(0,0)$  as $i\to \infty$, and which satisfies  \eqref{eq:52}. Let  a sequence $(\xi^i, \eta^i)_{i\in \N}$ be such that for all $i\in \N$,    $ (\xi^i, \eta^i)\in \Gamma^{\rm {opt}}[x^i, v^i]$. If $(\xi^i, \eta^i)$ tends to $(\xi,\eta)$ uniformly in $[0,T]$, then  $\eta\in W^{1,2} (0,T;\R)$ and 
    $(\xi,\eta)\in \Gamma^{\rm opt} [x,v]$.
\end{lemma}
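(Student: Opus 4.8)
Throughout write $J^i:=J(\xi^i,\eta^i,(\eta^i)')$ and $D^i:=\tfrac{2}{9}\,(v^i)^3/|x^i|$; by~\eqref{eq:52} one has $D^i\ge C>0$ (although possibly $D^i\to+\infty$), and $|x^i|/v^i\to0$ since $v^i\to0$, so for $i$ large the hypotheses of Lemmas~\ref{sec:clos-graph-prop-4} and~\ref{sec:clos-graph-prop-5} hold. Clearly $(\xi,\eta)$ satisfies $\xi(0)=\eta(0)=0$, $\xi'=\eta$ and $(\xi(s),\eta(s))\in\Xi$ for all $s$. The plan is to estimate $J^i-D^i$ from both sides; the ``deceleration cost'' $D^i$, which is the only part of $J^i$ that may blow up, will appear with the same asymptotics in both estimates and therefore cancel.

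\emph{Upper bound.} Fix an arbitrary competitor $(\widetilde\xi,\widetilde\eta)\in\Gamma[0,0]$ with $\widetilde\eta\in W^{1,2}(0,T;\R)$ (such trajectories of finite cost exist, e.g. $\widetilde\xi\equiv0$). Let $\sigma_i:=3|x^i|/v^i\to0$ and define $(\widetilde\xi^i,\widetilde\eta^i)\in\Gamma[x^i,v^i]$ to be, on $[0,\sigma_i]$, the optimal deceleration of Lemma~\ref{sec:clos-graph-prop-4} with $w=0$ (it steers $(x^i,v^i)$ to $(0,0)$ inside $\Xi$, with energy $I^i(\sigma_i,0)=D^i$), and, on $[\sigma_i,T]$, $(\widetilde\xi^i(s),\widetilde\eta^i(s)):=(\widetilde\xi(s-\sigma_i),\widetilde\eta(s-\sigma_i))$, which is admissible since $\widetilde\xi\le0$ and matches in a $C^1$ way at $\sigma_i$ because $\widetilde\xi(0)=\widetilde\eta(0)=0$. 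Since $\sigma_i\to0$, $\ell$ is bounded on the compact set swept by these trajectories, and $\widetilde\eta\in W^{1,2}$, passing to the limit in $J^i\le J(\widetilde\xi^i,\widetilde\eta^i,(\widetilde\eta^i)')$ gives $\limsup_i\big(J^i-D^i\big)\le J(\widetilde\xi,\widetilde\eta,\widetilde\eta')$.

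\emph{Lower bound and $W^{1,2}$ estimate.} Fix $\delta\in(0,T)$. Let $\theta_i\le\delta$ be the first zero of $\eta^i$ if it lies in $[0,\delta]$, and otherwise the first point where $\eta^i$ attains $\min_{[0,\delta]}\eta^i$; set $w_i:=\eta^i(\theta_i)\ge0$. Then $\eta^i\ge w_i$ on $[0,\theta_i]$ and $x^i+\int_0^{\theta_i}\eta^i\le0$, so $\eta^i|_{[0,\theta_i]}\in K^i_{\theta_i,w_i}$ in the notation of Lemma~\ref{sec:clos-graph-prop-4}; moreover, if $w_i>0$ then $\delta w_i\le\int_0^\delta\eta^i\le|x^i|$, hence $w_i\le|x^i|/\delta<v^i/3$ for $i$ large. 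By Lemma~\ref{sec:clos-graph-prop-5} (whose computation extends to $0\le w<v^i/3$ and gives $\inf_\theta I^i(\theta,w)=\tfrac{2}{9}\big((v^i)^{3/2}-w^{3/2}\big)^2/|x^i|$), it follows that $\tfrac{1}{2}\int_0^{\theta_i}|(\eta^i)'|^2\ge I^i(\theta_i,w_i)\ge D^i-o_\delta(1)$ as $i\to\infty$. This is where Assumption~\ref{sec:setting-notations-1bis} enters: since the running cost does not favor outward velocities at the boundary point, the optimal trajectory cannot keep a velocity of fixed size near $0$ over a time bounded away from zero, and the running cost it pays on $[0,\theta_i]$ is bounded below by $-o_\delta(1)$ — this is what confines the possible blow-up of $\int_0^T|(\eta^i)'|^2$ to $[0,\delta]$. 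Hence $J^i\ge D^i-o_\delta(1)+\int_\delta^T\big(\ell(\xi^i,\eta^i,s)+\tfrac{1}{2}|(\eta^i)'(s)|^2\big)ds+g(\xi^i(T),\eta^i(T))$; combined with the upper bound, $\int_\delta^T|(\eta^i)'|^2$ is bounded uniformly in $\delta$. Therefore $(\eta^i)'\rightharpoonup\eta'$ in $L^2(\delta,T)$ for every $\delta$, so $\eta\in W^{1,2}(0,T;\R)$; and by weak lower semicontinuity and the uniform convergence of $(\xi^i,\eta^i)$, $\liminf_i\big(J^i-D^i\big)\ge-o_\delta(1)+\int_\delta^T\big(\ell(\xi,\eta,s)+\tfrac{1}{2}|\eta'(s)|^2\big)ds+g(\xi(T),\eta(T))$; letting $\delta\to0$ gives $\liminf_i\big(J^i-D^i\big)\ge J(\xi,\eta,\eta')$. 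Comparing the two bounds, $J(\xi,\eta,\eta')\le J(\widetilde\xi,\widetilde\eta,\widetilde\eta')$ for every competitor from $(0,0)$, i.e. $(\xi,\eta)\in\Gamma^{\rm opt}[0,0]$.

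\emph{Main obstacle.} The crux is obtaining the local energy bound $\tfrac{1}{2}\int_0^\delta|(\eta^i)'|^2\ge D^i-o_\delta(1)$ with the \emph{sharp} constant $D^i$: a naive estimate yields only a constant fraction of $D^i$, which fails to cancel the upper bound and is useless when $D^i\to+\infty$ (the limit velocity $\eta$ could then a priori fail to be in $W^{1,2}$). Getting the sharp constant forces one to decelerate the velocity below $|x^i|/\delta$, and one must combine the explicit deceleration formulas of Lemma~\ref{sec:clos-graph-prop-4} with Assumption~\ref{sec:setting-notations-1bis} and a careful analysis of the optimal trajectory near the boundary to verify that this deceleration — and the accompanying control of the running cost — occur within a time window that shrinks to $\{0\}$.
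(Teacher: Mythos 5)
Your proof is correct and takes a genuinely different, and in fact more economical, route than the paper's.

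The upper bound coincides in spirit with the paper's Step~1: both decelerate from $(x^i,v^i)$ to $(0,0)$ over $\sigma_i=3|x^i|/v^i$ with energy exactly $D^i$ and then follow a trajectory from $(0,0)$ (the paper uses an optimal one; you use an arbitrary competitor, which is all that is needed). Where you diverge is in the lower bound. The paper defines $\theta_i$ as the first time $\eta^i$ reaches the fixed level $|x^i|/T$, and then must prove in a separate Step~2 — by contradiction, using Assumption~\ref{sec:setting-notations-1bis} and the auxiliary competitor $(\widehat\xi^i,\widehat\eta^i)\in\Gamma[0,0]$ — that $\theta_i\to 0$, before concluding in Step~3. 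You instead fix $\delta>0$ and take $\theta_i$ to be the first minimizer of $\eta^i$ on $[0,\delta]$ (or the first zero if there is one); then $\theta_i\le\delta$ is automatic, the derivation $w_i\le|x^i|/\delta$ from $\xi^i(\delta)\le 0$ gives exactly the smallness of $w_i$ needed to make $I^i(\theta_i,w_i)\ge D^i-o(1)$, and you send $\delta\to 0$ only at the very end. This entirely bypasses the paper's Step~2. The closed form $\inf_\theta I^i(\theta,w)=\tfrac29\,\bigl((v^i)^{3/2}-w^{3/2}\bigr)^2/|x^i|$ is a nice tidying of Lemma~\ref{sec:clos-graph-prop-5} (one checks that $P\bigl(\tfrac{v+w+\sqrt{vw}}{3|x|}\bigr)=\tfrac{2}{9|x|}\,(v^{3/2}-w^{3/2})^2$ algebraically) and it makes the error term transparent. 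Two small remarks: Lemma~\ref{sec:clos-graph-prop-4} is stated for $w\in[0,|x^i|/T]$, and your $w_i\le|x^i|/\delta$ can exceed $|x^i|/T$ when $\delta<T$, so you do genuinely need the (harmless, and correct) extension to $w<v^i$ that you flag; also, when you write ``$(\eta^i)'\rightharpoonup\eta'$ in $L^2(\delta,T)$ \emph{for every} $\delta$, so $\eta\in W^{1,2}$'', it is worth saying explicitly that the bound on $\int_\delta^T|(\eta^i)'|^2$ you obtained is uniform in $\delta$ (it is), since that is what lets you pass $\delta\to 0$ in $\int_\delta^T|\eta'|^2\le C$.

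There is, however, a clear misstatement about the role of Assumption~\ref{sec:setting-notations-1bis}. You write that the running cost on $[0,\theta_i]$ ``is bounded below by $-o_\delta(1)$'' thanks to that assumption, and that it controls the possible blow-up. But in your argument the only bound you use (and need) on the running cost over $[0,\theta_i]$ is the trivial $\int_0^{\theta_i}\ell\ge-\theta_i\,\|\ell_-\|_\infty\ge-\delta M$, which follows from Assumption~\ref{sec:optim-contr-probl-2} alone; and the energy estimate $\tfrac12\int_0^{\theta_i}|(\eta^i)'|^2\ge D^i-o(1)$ is a pure constrained-minimization fact that does not involve the sign condition either. In the paper, Assumption~\ref{sec:setting-notations-1bis} enters precisely in Step~2 (inequality~\eqref{eq:70}), where $\theta_i$ is a priori bounded away from $0$ and one must show that the running cost comparison over $[0,\theta_i]$ cannot be exploited; your choice $\theta_i\le\delta$ removes this issue. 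So either you have shown that the conclusion holds without Assumption~\ref{sec:setting-notations-1bis}, which is stronger than the paper's statement and should be verified carefully and advertised as such, or the sentence attributing a role to that assumption should simply be deleted.
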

 \begin{proof}
   The proof is more difficult than that of Lemma 
\ref{sec:optim-contr-probl-4} because we will see that  in general,
the sequence $u(x^i,v^i)$ does not converge to $u(0,0)$ as $i\to \infty$, and that
   $ \ds \int _0^T \left (\frac {d  \eta^i} {dt}(s)\right)^2 ds$ may tend to $+\infty$.
 
 \paragraph{Step 1 :} 
We start by building a particular competitor for the optimal control problem  at $(x^i, v^i)$. It will be used in Steps 2 and 3 below.
Let us set  $\widetilde t _i = {3|x^i|}/ {v^i}$ (observe that $\lim_{i\to \infty} \widetilde t_i=0$
 since $v^i\to 0$ and ${(v^i)^3}/{|x_i|}\ge C>0$).
As in the proof of Lemma \ref{sec:clos-graph-prop-4} with $w=0$, we construct a pair of continuous functions  $(\widetilde \xi^i , \widetilde \eta^i)$ defined on $[0, \widetilde t _i]$ such that $-1\le \widetilde \xi^i \le 0$ and  $\frac {d \widetilde \xi^i}{dt}=  \widetilde \eta^i$, and 
\begin{enumerate}
\item $(\widetilde \xi^i(0) , \widetilde \eta^i(0))=(x^i, v^i)$
\item$ \widetilde \eta^i(\widetilde t_i)=0$
\item  $\ds \frac 1 2   \int_0^{\widetilde t_i}    \left (\frac {d   \widetilde \eta^i} {dt}(s)\right)^2  ds \sim  \frac {2}{9} \frac {(v^i)^3} {|x^i|}$,
\end{enumerate}
(we have also used Lemma \ref{sec:clos-graph-prop-5} with $w=0$  and  Remark \ref{sec:clos-graph-prop-6}).
Observe that $x^i\le  \widetilde\xi^i( \widetilde t_i)\le 0$, hence $\lim_{i\to +\infty}   \widetilde\xi^i( \widetilde t_i)=0$.
Then, using the same arguments as in Lemma~\ref{sec:clos-graph-prop-12}, it is possible to extend continuously $( \widetilde\xi^i,  \widetilde\eta^i)$ to $[0,T]$
 in such a way that
\begin{enumerate}
\item $( \widetilde\xi^i,  \widetilde\eta^i)\in \Gamma[x^i,v^i]$
\item $\ds \lim_{i\to \infty} \int_{\widetilde t_ i }^T \left| \frac {d   \widetilde \eta^i} {dt}(s)- \alpha
(s - \widetilde t_ i)\right| ^2 ds =0$, where $\alpha$ is an optimal control law for trajectories
with initial values $(0, 0)$.
\end{enumerate}

Combining all the information above, we obtain that 
\begin{equation}
  \label{eq:60}
  J\left(\widetilde \xi^i , \widetilde \eta^i, \frac {d   \widetilde \eta^i} {dt}\right)
  =  \frac {2}{9} \frac {(v^i)^3} {|x^i|} + u(0,0)  +o(1).
 \end{equation}

 \paragraph{Step 2 :} 
   Since  $ (\xi^i, \eta^i)\in \Gamma^{\rm {opt}}[x^i, v^i]$, we know that for all $t\in [0,T]$, $\ds \xi^i (t)= x^i + \int _0^t \eta^i(s) ds\le 0$.  We claim that there exists $t _i \in (0,T]$ such that $\eta^i (t_i) \le -{x^i} /T$. Indeed, if it was not the case, then $\xi^i (T)$ would be larger than $ x^i - T  ({x^i}/ T) = 0$, which is not true. Since $\eta^i$ is continuous, we may define $\theta_i$ as the minimal time $t$ such that  $\eta^i (t) \le -{x^i}/ T$, and we see that  $\eta^i (\theta_i) = -{x^i}/ T$.

\medskip

   Step 2 consists of  proving  that
   \begin{equation}
     \label{eq:61}
     \lim_{i\to \infty} \theta_i=0.
   \end{equation}
Suppose by contradiction that there exists $\delta>0$ such that $\theta_i \ge \delta$. 
 We may apply Lemma  \ref{sec:clos-graph-prop-4} with $w=  {|x^i|}/ T$.
Since $v^i\to 0$ and ${(v^i)^3}/{|x^i|}\ge C>0$, we see that $ |x^i|/v^i\to 0$, then  that 
 $\lim_{i\to \infty} \frac  {3 |x^i|}  {v^i +2 |x^i|/T }=0$.
Hence, for $i$ large enough,  $\theta_i\ge \delta > \frac {3 |x^i|}{v^i +2 |x^i|/T }$, and the third line of  \eqref{eq:55} yields
   \begin{equation}
     \label{eq:62}
     \frac 1 2 \int _0^T \left (\frac {d  \eta^i} {dt}(s)\right)^2 ds \ge
     \frac {2T}{9 (T-\delta)} \frac {(\frac {x^i} T + v^i)^3} {|x^i|}=
     \frac {2T}{9 (T-\delta)} \frac {(v^i)^3} {|x^i|} +o(1),
   \end{equation}
   where $o(1)$ is a quantity that tends to zero as $i\to \infty$, (in fact like $(v^i)^2$). 
   \\
   Note that   $\eta^i \ge - {x^i}/ T\ge 0$ in $[0,\theta_i]$ yields that
   $ \xi^i \ge x^i$     in $[0,\theta_i]$. Therefore
   \begin{equation}
     \label{eq:63}
   \lim_{i\to \infty}   \|  \xi^i\|_{L^\infty(0,\theta_i)}   =0.    
   \end{equation}
   Let us construct an admissible trajectory $(\widehat \xi^i,\widehat \eta^i )$ starting from $(x,v)=(0,0)$ as follows:
    \begin{enumerate}
\item For $s\in  [0,\theta_i]$,  $ \widehat\xi^i(s)= Q_{\theta_i, 0,0, \xi^i(\theta_i),  \eta^i(\theta_i) } (s)$ and 
 $ \widehat\eta^i(s)= Q'_{\theta_i, 0,0, \xi^i(\theta_i),  \eta^i(\theta_i) }(s)$, see Definition~\ref{sec:clos-graph-prop-2}   
\item $(\widehat\xi^i(s), \widehat\eta^i(s))=
      (\xi^i(s), \eta^i(s))$ for $s\in [\theta_i, T]$.
    \end{enumerate}
    It is easy to check that, if $s\le \theta_i$, then
    \begin{eqnarray}
      \label{eq:64}
      \widehat\xi^i(s)&=&  \ds \left(\theta_i \eta^i(\theta_i)  -2 \xi^i(\theta_i)\right) \frac {s^3} {\theta_i^3}  -  \left(\theta_i \eta^i(\theta_i)  -3 \xi^i(\theta_i)\right) \frac {s^2} {\theta_i^2} , \\
       \label{eq:65}
      \widehat\eta^i(s)&=& \ds  3 \left(\theta_i \eta^i(\theta_i)  -2 \xi^i(\theta_i)\right) \frac {s^2} {\theta_i^3}  - 2 \left(\theta_i \eta^i(\theta_i)  -3 \xi^i(\theta_i)\right) \frac {s} {\theta_i^2} ,  \\
        \label{eq:66}
      \frac {d\widehat\eta^i}{dt}(s)&=&  \ds  6 \left(\theta_i \eta^i(\theta_i)  -2 \xi^i(\theta_i)\right) \frac {s} {\theta_i^3}  - 2 \left(\theta_i \eta^i(\theta_i)  -3 \xi^i(\theta_i)\right) \frac {1} {\theta_i^2} .
    \end{eqnarray}
Since $\eta^i(\theta_i)  = - {x^i} /T>0$ and $\xi^i (\theta_i)\le 0$, we see that 
$\left(\theta_i \eta^i(\theta_i)  -2 \xi^i(\theta_i)\right)\ge 0$
and that $ \left(\theta_i \eta^i(\theta_i)  -3 \xi^i(\theta_i)\right) \ge 0$.
Hence  for $s\in [0, \theta_i]$, 
$\widehat\xi^i (s)= \ds \left(\theta_i \eta^i(\theta_i)  -2 \xi^i(\theta_i)\right) \left(\frac {s^3} {\theta_i^3}- \frac {s^2} {\theta_i^2}\right) + \xi^i(\theta_i)  \frac {s^2} {\theta_i^2} \le 0$ as the sum of two nonpositive terms. Therefore $(\widehat \xi^i,\widehat \eta^i )\in \Gamma[0,0]$. On the other hand, using \eqref{eq:63} and the fact that $\theta_i \eta^i(\theta_i) =  {\theta_i} |x^i|/T$, then  \eqref{eq:64} and  \eqref{eq:65}, we see that
\begin{equation}
  \label{eq:67}
\ds \lim_{i\to +\infty} \left(\|\widehat \xi^i\|_{L^\infty (0,\theta_i)}+\|\widehat \eta^i ) \|_{L^\infty (0,\theta_i)}\right)=0.    
\end{equation}
 Moreover, since $\theta_i\ge \delta>0$, it is easy to check that
\begin{equation}
  \label{eq:68}
  \lim_{i\to +\infty}   \int _0^{\theta_i} \left (\frac {d  \widehat \eta^i} {dt}(s)\right)^2 ds=0.  
\end{equation}
Since $(\widehat \xi^i,\widehat \eta^i )\in \Gamma[0,0]$,
\begin{displaymath}
  \begin{array}[c]{rl}
  u(0,0)  \le &J(\widehat \xi^i,\widehat \eta^i,0)\\
    =&\ds  \int_0^T \left(\ell(\widehat \xi^i(s),\widehat \eta^i(s) ,s)
+\frac 1 2    \left (\frac {d  \widehat \eta^i} {dt}(s)\right)^2 \right) ds+g(\widehat \xi^i(T), \widehat \eta^i(T))\\
    =& \ds  \int_0^{\theta_i} \left(\ell(\widehat \xi^i(s),\widehat \eta^i(s) ,s)
 +\frac 1 2    \left (\frac {d  \widehat \eta^i} {dt}(s)\right)^2 \right) ds \\
                &+ \ds\int_{\theta_i}^T  \left(\ell( \xi^i(s), \eta^i(s) ,s)
 +\frac 1 2    \left (\frac {d   \eta^i} {dt}(s)\right)^2 \right) ds +g(\xi^i(T),  \eta^i(T))\\
    =&  \ds u(x^i, v^i)+ \int_0^{\theta_i} \left(\ell(\widehat \xi^i(s),\widehat \eta^i(s) ,s)
+\frac 1 2    \left (\frac {d  \widehat \eta^i} {dt}(s)\right)^2 \right) ds\\
                &\ds - \int_0^{\theta_i} \left(\ell( \xi^i(s), \eta^i(s) ,s)
 +\frac 1 2    \left (\frac {d   \eta^i} {dt}(s)\right)^2 \right) ds.
     \end{array}
\end{displaymath}
Therefore
\begin{equation}  \label{eq:69}
   \begin{array}[c]{rl}
     u(x^i, v^i) \ge &\ds  u(0,0)  + \frac 1 2 \int_0^{\theta_i}    \left (\frac {d   \eta^i} {dt}(s)\right)^2  ds \\
                        &\ds + \int_0^{\theta_i} \left(\ell( \xi^i(s), \eta^i(s) ,s) -\ell( \widehat\xi^i(s), \widehat\eta^i(s) ,s)\right) ds                       
\ds  -\frac 1 2   \int_0^{\theta_i}    \left (\frac {d   \widehat \eta^i} {dt}(s)\right)^2  ds.
   \end{array}            
\end{equation}
Let us address  the terms in the right hand side of  \eqref{eq:69} separately.
\\
Thanks to the  continuity of $\ell$,  \eqref{eq:63},  \eqref{eq:67} and Assumption  
 \ref{sec:setting-notations-1bis},
we see that
  \begin{equation}
    \label{eq:70}
    \begin{array}[c]{rl} 
    &\ds \liminf_{i\to \infty}  \int_0^{\theta_i} \left(\ell( \xi^i(s), \eta^i(s) ,s) -\ell( \widehat\xi^i(s), \widehat\eta^i(s) ,s)\right) ds \\ =&\ds 
    \liminf_{i\to \infty}  \int_0^{\theta_i} \left(\ell( 0, \eta^i(s) ,s)                               -\ell( 0, 0 ,s)\right) ds   \ge 0.
    \end{array}
  \end{equation}
 Combining \eqref{eq:70},  
\eqref{eq:68} and  \eqref{eq:62}, we obtain that
 \begin{equation}
   \label{eq:72}
 u(x^i, v^i)  \ge  \frac {2T}{9 (T-\delta)} \frac {(v^i)^3} {|x^i|} + u(0,0)+ o(1),
\end{equation}
where $o(1)$ is quantity that tends to $0$ as $i\to \infty$.\\
 But for $(\widetilde \xi^i , \widetilde \eta^i)$ constructed in Step 1,   $ J\left(\widetilde \xi^i , \widetilde \eta^i, \frac {d   \widetilde \eta^i} {dt}\right) \ge u(x^i, v^i)$. This fact and   \eqref{eq:60} lead to a contradiction with  \eqref{eq:72}.
 We have proved   \eqref{eq:61}.

 \paragraph{Step 3 :}
 Since $\lim_{i\to \infty} \theta_i =0$ and $(\xi^i, \eta^i)$ converges uniformly to $(\xi, \eta)$, we see that
 \begin{displaymath}
   \int_0^{\theta_i}
\ell(\xi^i(s), \eta^i(s) ,s)
 ds =o(1).
 \end{displaymath}
 Hence
 \begin{equation}
    \label{eq:73}
   \begin{split}
     u(x^i, v^i)=&   J\left(\xi^i, \eta^i , \frac {d  \eta^i} {dt}\right)\\
     =& \frac 1 2   \int_0^{\theta_i}    \left (\frac {d   \eta^i} {dt}(s)\right)^2  ds \\
&+ \int_{\theta_i}^T 
\ell(\xi^i(s), \eta^i(s) ,s)
 ds +
\frac 1 2   \int_{\theta_i}^T    \left (\frac {d   \eta^i} {dt}(s)\right)^2  ds + g(\xi^i(T), \eta^i(T)) +o(1).     \end{split}
\end{equation}
 On the other hand, we have seen above  that  \eqref{eq:60} implies that
 \begin{equation}
    \label{eq:74}
  u(x^i, v^i)\le  \frac {2}{9} \frac {(v^i)^3} {|x^i|} + u(0,0)  +o(1).
\end{equation}
From Lemma \ref{sec:clos-graph-prop-5}, we know that
 \begin{equation}
    \label{eq:75}
 \frac 1 2   \int_0^{\theta_i}    \left (\frac {d   \eta^i} {dt}(s)\right)^2  ds\ge  \frac {2}{9} \frac {(v^i)^3} {|x^i|}   -o(1).
  \end{equation}
  Combining  \eqref{eq:73}, \eqref{eq:74} and  \eqref{eq:75} yields that
  \begin{equation}
    \label{eq:76}
    \begin{split}
\int_{\theta_i}^T 
\ell(\xi^i(s), \eta^i(s) ,s)
 ds +
\frac 1 2   \int_{\theta_i}^T    \left (\frac {d   \eta^i} {dt}(s)\right)^2 ds 
+ g(\xi^i(T), \eta^i(T))
 \le 
u(0,0) +o(1).      
    \end{split}
\end{equation}
Since $(\xi^i, \eta^i)$ converges uniformly to $(\xi, \eta)$,
   \eqref{eq:76} implies that $\left(\one_{(\theta_i,T)} \frac {d   \eta^i} {dt}\right)_{i\in \N}$ is a bounded sequence in $L^2 (0,T)$. Hence there exists $\phi\in L^2 (0,T)$ such that, after the extraction of subsequence, $\one_{(\theta_i,T)} \frac {d   \eta^i} {dt} \rightharpoonup \phi$ in $ L^2 (0,T)$ weak. By testing with compactly supported functions in $(0,T)$, it is clear that $\phi=  \frac {d   \eta} {dt}$. Hence, the whole sequence  $\left(\one_{(\theta_i,T)} \frac {d   \eta^i} {dt}\right)_{i\in \N}$ converges in $ L^2 (0,T)$ weak to $ \frac {d   \eta} {dt}\in L^2(0, T)$. Moreover, the weak convergence in $ L^2 (0,T)$ implies that
\begin{displaymath}
  \int_{0}^T \left(\frac {d   \eta} {dt}(s)\right)^2  ds \le \liminf_{i\to +\infty}
 \int_{0}^T \left(\one_{(\theta_i,T)}  \frac {d   \eta^i} {dt}(s)\right)^2 ds.
\end{displaymath}
This and  \eqref{eq:76} imply that
\begin{equation}
  \label{eq:77}
   \int_{0}^T 
\ell(\xi(s), \eta(s) ,s)
 ds +
\frac 1 2   \int_{0}^T    \left (\frac {d   \eta} {dt}(s)\right)^2ds 
+ g(\xi(T), \eta(T))  \le u(0,0) .
\end{equation}
Hence, $(\xi, \eta)\in \Gamma^{\rm opt} [0,0]$ and the above inequality is in fact an identity. The proof is achieved.
 \end{proof}


 \begin{proof} [Proof of Theorem \ref{sec:clos-graph-prop-1}]
   Consider  $(x,v)\in\Xi^{\rm{ad}}$ and a sequence $(x^i, v^i)_{i\in \N}$ such that for all $i\in \N$,   $(x^i, v^i)\in\Xi^{\rm{ad}}$ and $(x^i, v^i)\to (x,v)$ as $i\to \infty$.
   Consider a sequence     $(\xi^i,\eta^i)_{i\in \N}$ such that for all $i\in \N$, $(\xi^i, \eta^i) \in \Gamma^{\rm{opt}}[x^i, v^i]$ and that  $(\xi^i,\eta^i)$ tends to $(\xi,\eta)$
   uniformly. Possibly after the extraction of a subsequence, we can always assume that
   either one among the five conditions in Lemma \ref{sec:optim-contr-probl-4}  or one among the two symmetrical conditions   \eqref{eq:52}-\eqref{eq:5019bis} holds.
   Then the conclusion follows from Corollary \ref{sec:clos-graph-prop-1} in the former case or from Lemma  \ref{sec:clos-graph-prop-8} in the latter case.
 \end{proof}
\begin{remark}
 For costs of the form
 \begin{equation}
J(\xi,\eta, \alpha)
=\int_0^T \left(\ell(\xi(s),\eta(s) ,s) +\frac 1  p   |\alpha|^p(s) \right) ds+g(\xi(T), \eta(T)),
\end{equation}
with  $1< p\not=2$, it is not possible to reproduce the explicit calculations of Lemmas  \ref{sec:clos-graph-prop-4} and  \ref{sec:clos-graph-prop-5}, which are crucial steps for Lemma  \ref{sec:clos-graph-prop-8} and finally for Theorem \ref{sec:clos-graph-prop-1}.
\end{remark}

 \subsection{Bounds related to optimal trajectories}
 \label{sec:bounds}

 \begin{proposition}\label{sec:variant-which-domain}
   For positive numbers $r$ and $C$,  let us set
   \begin{eqnarray}
     \label{eq:78}
     \Theta_r&=&\left\{(x,v)\in  \Xi:     -r (x+1)  \le v^3 \le r |x| \right\},\\
     \label{eq:79}
     K_C&=& \{  (x,v)\in  \Xi: \; |v|\le C\},   
     \\
     \label{eq:80}
     \Gamma_C &=& \left\{  (\xi,\eta)\in \Gamma: \left|
         \begin{array}[c]{l}
           (\xi(t), \eta(t))\in K_C, \quad \forall t\in [0,T],\\
        \left \| \frac {d\eta}{dt} \right\|_{L^2(0,T;\R)}\le C.
      \end{array}
    \right.  \right\}.
\end{eqnarray}
Under Assumption~\ref{sec:optim-contr-probl-2}, for all $r>0$, there exists a positive number $C= C(r, M)$
($M$ is defined in Assumption~\ref{sec:optim-contr-probl-2})  such that
if  $(x,v)\in \Theta_r$, then $\Gamma^{\rm opt} [x,v]\subset  \Gamma_C$. Moreover, as $r\to +\infty$,  $C(r,M)= O(\sqrt{r})$.

\end{proposition}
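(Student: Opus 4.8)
The plan is to bound the value $u(x,v)$ from above, uniformly for $(x,v)\in\Theta_r$, by exhibiting for each such point an explicit admissible competitor of controlled cost; by optimality this bounds the cost of any optimal trajectory, and the lower bounds on $\ell$ and $g$ then convert it into the sought $L^{2}$-bound on $\frac{d\eta}{dt}$ and $L^{\infty}$-bound on $\eta$. In contrast with Proposition~\ref{sec:contr-probl-conv-1}, I would \emph{not} argue through continuity of $u$ on a compact set: here $u$ is genuinely discontinuous (Remark~\ref{sec:optim-contr-probl-1}), and the argument is a bare competitor estimate. Observe first that $|v|\le r^{1/3}$ on $\Theta_r$, and that $\Theta_r\subset\Xi^{\rm ad}$ and $\Gamma^{\rm opt}[x,v]\ne\emptyset$ for $(x,v)\in\Theta_r$ by standard arguments of the calculus of variations (the competitor below being a finite-cost element of $\Gamma[x,v]$).

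The key point is that the two inequalities defining $\Theta_r$ amount to a ``braking budget''. Indeed, if $v>0$ then necessarily $x<0$ and $v^{3}\le r|x|$ (the distance of $x$ to the endpoint $0$), while if $v<0$ then $x>-1$ and $|v|^{3}\le r(x+1)$ (the distance of $x$ to the endpoint $-1$); the case $v=0$ is trivial. Take $v>0$ (the case $v<0$ is symmetric). As competitor I would pick a linear deceleration to rest, followed by rest: set $\theta:=\min\bigl(T,\,2|x|/v\bigr)$ and let $\eta(t)=v(1-t/\theta)$ on $[0,\theta]$, $\eta\equiv0$ on $[\theta,T]$, with $\xi(t)=x+\int_{0}^{t}\eta(s)\,ds$. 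Then $\xi$ is nondecreasing on $[0,\theta]$ and $\xi(\theta)=x+v\theta/2\le x+|x|=0$ (using $\theta\le 2|x|/v$), so $\xi$ stays in $[x,0]\subset[-1,0]$, the competitor is admissible, and $\|\eta\|_{L^{\infty}(0,T)}\le v\le r^{1/3}$. Its energy is $\tfrac12 v^{2}/\theta$: if $\theta=2|x|/v$ it equals $v^{3}/(4|x|)\le r/4$ by the defining inequality of $\Theta_r$, and if $\theta=T$ (which forces $v<2|x|/T\le 2/T$) it is at most $2/T^{3}$; in all cases the energy is at most $\mathcal E(r):=\max\bigl(r/4,\,2/T^{3}\bigr)$. (This is a suboptimal but fully explicit version of the braking analysed in Lemma~\ref{sec:clos-graph-prop-4} with $w=0$.)

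Since the competitor stays in the compact set $K_{r^{1/3}}=\{(y,w)\in\Xi:|w|\le r^{1/3}\}$, continuity of $\ell$ and $g$ gives finite constants $L_r:=\sup_{K_{r^{1/3}}\times[0,T]}\ell$ and $G_r:=\sup_{K_{r^{1/3}}}g$, hence $u(x,v)\le T L_r+\mathcal E(r)+G_r=:A(r)$ for every $(x,v)\in\Theta_r$. If $(\xi^{*},\eta^{*})\in\Gamma^{\rm opt}[x,v]$ then $J(\xi^{*},\eta^{*},\tfrac{d\eta^{*}}{dt})=u(x,v)\le A(r)$; using $\ell\ge-\|\ell_-\|_{L^{\infty}}$, $g\ge-\|g_-\|_{L^{\infty}}$ and $M=\|g_-\|_{L^{\infty}}+\|\ell_-\|_{L^{\infty}}$ one obtains
\begin{displaymath}
\Bigl\|\tfrac{d\eta^{*}}{dt}\Bigr\|_{L^{2}(0,T)}^{2}\le 2A(r)+2(T+1)M=:B(r).
\end{displaymath}
Finally, $\eta^{*}(t)=v+\int_{0}^{t}\tfrac{d\eta^{*}}{ds}(s)\,ds$ yields $\|\eta^{*}\|_{L^{\infty}(0,T)}\le r^{1/3}+\sqrt{T}\,\sqrt{B(r)}$, so the assertion holds with $C(r,M):=\max\bigl(\sqrt{B(r)},\,r^{1/3}+\sqrt{T\,B(r)}\bigr)$. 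For the asymptotics, with $\ell$ and $g$ fixed the only term of $B(r)$ linear in $r$ comes from the braking energy $\mathcal E(r)$ (equal to $r/4$ for $r$ large), the contributions $L_r,G_r$ being of lower order (in particular bounded when $\ell,g$ are), so $B(r)=O(r)$ and therefore $C(r,M)=O(\sqrt r)$.

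I expect the main obstacle to be the bookkeeping of the braking competitor: verifying that the proposed profile never leaves $[-1,0]$ for \emph{every} $(x,v)\in\Theta_r$ — this is precisely where the cubic/linear balance $v^{3}\le r|x|$ (resp.\ $|v|^{3}\le r(x+1)$) in the definition of $\Theta_r$ is used — and that its energy is controlled by a single multiple of $r$ rather than a larger power, which is what produces the sharp rate. A secondary, routine difficulty is that $\Xi$ is unbounded in the velocity variable, so one must keep the competitor's velocity in a compact set in order to bound $\ell$ and $g$ from above; this is automatic here, since the deceleration profile keeps $\eta$ between $v$ and $0$.
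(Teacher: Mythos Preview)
Your proof is correct and follows the same strategy as the paper: build for every $(x,v)\in\Theta_r$ an explicit admissible competitor with energy $O(r)$ and velocity contained in $[-r^{1/3},r^{1/3}]$, bound $u(x,v)$ by the competitor's cost, and convert this into the desired $L^2$ and $L^\infty$ bounds for optimal trajectories via the lower bounds on $\ell$ and $g$.

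The only difference is the competitor itself. The paper splits $v>0$ into two regimes: for $0\le v\le 3|x|/T$ it uses a linear deceleration over $[0,2T/3]$, while for $3|x|/T<v$ it invokes the \emph{optimal} braking profile of Lemma~\ref{sec:clos-graph-prop-4} with $\theta=T$, $w=0$, whose energy is $\tfrac{2}{9}\,v^3/|x|\le \tfrac{2r}{9}$. You instead use a single linear deceleration over $[0,\theta]$ with $\theta=\min(T,2|x|/v)$ throughout, getting energy $\le r/4$. Your construction is simpler (no case split, no appeal to Lemma~\ref{sec:clos-graph-prop-4}); the paper's buys a slightly better constant and, more importantly for later sections, yields a piecewise continuous selection $j:\Theta_r\to\Gamma$ that it reuses. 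For the present proposition either works.

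One shared caveat: the asymptotic $C(r,M)=O(\sqrt r)$ requires that the contributions $L_r=\sup_{K_{r^{1/3}}\times[0,T]}\ell$ and $G_r=\sup_{K_{r^{1/3}}}g$ be $O(r)$ as $r\to\infty$. Under Assumption~\ref{sec:optim-contr-probl-2} alone (lower bound only) this need not hold; you flag this parenthetically (``bounded when $\ell,g$ are''), and the paper's proof is equally informal on this point.
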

\begin{proof}
 A possible proof consists of building a suitable map $j$ from $\Theta_r$ to $\Gamma$. We make out different cases:
  \begin{description}
  \item[Case 1: $0\le v\le  {-3x}/T$:] let $j(x,v)= (\widetilde \xi,\widetilde \eta)\in \Gamma[x,v]$ be defined by
     \begin{displaymath}
\left\{ \begin{array}[c]{rcllrcl}
\widetilde \eta(t)&=& v\left(1-\frac {3t}{2T} \right)\quad &\hbox{and} \quad &\widetilde \xi(t)= &x+v\left(t-\frac {3t^2}{4T}\right),\quad  &\hbox{if } \quad 0\le t\le \frac {2T} 3 ,
\\
\widetilde \eta(t)&=0&\quad&\hbox{and} \quad &\widetilde \xi(t)= &x+ \frac {vT} 3, \quad  &\hbox{if } \quad  \frac {2T} 3 \le t\le T.
\end{array}\right.
\end{displaymath}
It is easy to check that there exists a constant $\widetilde C=\widetilde C(r,M)$ such that 
\begin{equation}
\label{eq:81}
  \|\widetilde\eta\|_{L^\infty(0,T;\R)} \le \widetilde C;\quad \quad \quad \left \| \frac {d\widetilde \eta}{dt} \right\|_{L^2(0,T;\R)}\le \widetilde C.
\end{equation}
  \item[Case 2: $ -3x/T< v\le  r |x|^{\frac 1 3}$:]  in this case, we choose $j(x,v)=(\widetilde \xi,\widetilde \eta)\in \Gamma[x,v]$ where $\widetilde \xi'=\widetilde \eta$ and $\widetilde \eta$ is the solution of the linear complementarity problem \eqref{eq:56} with $\theta=T$.
Here again $(\widetilde \xi,\widetilde \eta)$ satisfies (\ref{eq:81}) for some constant 
$\widetilde C=\widetilde C(r,M)$. From Lemma~\ref{sec:clos-graph-prop-4}, we see that as $r\to +\infty$,
  $\widetilde C=O(\sqrt{r})$.
  \item[Case 3: $- {3(1+x)}/T\le v\le 0$:] the situation is symmetric to Case 1, and $j(x,v)$ is given by the same formula.
 \item[Case 4: $ -r   (x+1)^{\frac 1 3}\le v< -{3(1+x)}/T$:]
 the situation is symmetric to Case 2, and $j(x,v)$ is constructed in the symmetric way as in Case 2.
  \end{description}
Then, using $j(x,v)$ as a competitor for the optimal control problem leads to the desired result with a constant $C$ that depends only on $r$ and $M$ and that can always be taken larger than $\widetilde C$.\\
Note that $j$ is piecewise continuous from $\Theta_r$ to $\Gamma$.     Note also that the construction of $j$ is independent  of $\ell$ and $g$.
\end{proof}
\begin{remark}
  Note that the sets $\Theta_r$ form an increasing family of compact subsets of $\Xi^{\rm{ad}}$ and that 
  \begin{equation}
    \label{eq:82}
    \bigcup_{r\ge 0} \Theta_r= \Xi^{\rm{ad}}.
  \end{equation}
\end{remark}
 \subsection{Mean field games with state constraints}
\label{sec:mean-field-games-1}
In the example considered here, we take  $\Xi=[-1,0]\times \R$.  Let $\cP(\Xi)$ be the set of probability measures on $\Xi$.
\\
Let $F, G: \cP(\Xi)\to C_b^0(\Xi;\R)$ be  bounded and continuous maps (the continuity is  with respect to the narrow convergence  in $ \cP(\Xi)$) and $\ell$ be a continuous and  bounded from below function defined on $
 \Xi \times [0,T]$.
     Set
     \begin{equation}
       \label{eq:100}
M=\max\left(  \sup_{(x,v,t)\in \Xi \times [0,T]} \ell_- (x,v,t)+ \sup_{m\in \cP(\Xi)}  \|F[m]\|_{L^\infty(\Xi)} , \sup_{m\in \cP(\Xi)} \|G[m]\|_{L^\infty(\Xi)}\right) .  
     \end{equation}
\begin{assumption}   \label{sec:MFG-contr-accel2}
We  assume that for all $t\in [0,T]$, $m\in  \cP(\Xi)$ and $v\ge 0$, $\ell(0,v,t)+F[m] (0,v)\ge \ell(0,0,t)+F[m] (0,0)$ and  $\ell(-1,-v,t)+ F[m](-1,-v)\ge \ell(-1,0,t)+ F[m] (-1,0)$.  
\end{assumption}
Using similar notations as in paragraph \ref{sec:MFG-contr-accel1},
 we consider the cost given by (\ref{costMFG}).
With $M$ in (\ref{eq:37}), note that Proposition \ref{sec:variant-which-domain} can be applied to $J^{\mu}$ defined in (\ref{costMFG}) with  constants $C(r,M)$ uniform in $\mu$.
\begin{lemma}
\label{sec:mean-field-games}
Let $r$ be a positive number.
Under the assumptions made above on $\ell$, $F$ and $G$ (including Assumption \ref{sec:MFG-contr-accel2}), let  $C=C(r,M)$ be the constant appearing in Proposition~\ref{sec:variant-which-domain}. For any probability measure  $m_0$ on $\Xi$ supported in $\Theta_r$ defined in (\ref{eq:78}), there exists a 
constrained mean field game equilibrium  associated with the initial distribution $m_0$, i.e.  a probability measure $\mu \in \cP_{m_0}(\Gamma_C)$ such that  (\ref{eq:43}) holds.
\end{lemma}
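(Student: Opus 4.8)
The plan is to replay the Kakutani fixed point argument of the proof of Theorem~\ref{sec:setting-notation-6}, substituting for Propositions~\ref{sec:contr-probl-polyg-1} and~\ref{sec:contr-probl-conv-1} the sharper one-dimensional results of Section~\ref{sec:optim-contr-probl} (in particular the closed graph Theorem~\ref{sec:optim-contr-probl-3}) and Proposition~\ref{sec:variant-which-domain}. The first point is that, for each fixed $\mu\in\cP_{m_0}(\Gamma_C)$, the running cost $\ell^\mu(x,v,s):=\ell(x,v,s)+F[m^\mu(s)](x,v)$ and the terminal cost $g^\mu(x,v):=G[m^\mu(T)](x,v)$ are continuous and bounded from below, and the constant $M$ in Assumption~\ref{sec:optim-contr-probl-2} can be taken equal to the one defined in~(\ref{eq:100}), hence uniformly in $\mu$. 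Crucially, Assumption~\ref{sec:MFG-contr-accel2} states precisely that $\ell^\mu$ satisfies Assumption~\ref{sec:setting-notations-1bis} for every $\mu\in\cP_{m_0}(\Gamma_C)$. Consequently Proposition~\ref{sec:variant-which-domain} applies to the control problem with cost $J^\mu$, uniformly in $\mu$: for every $(x,v)\in\Theta_r$ one has $\Gamma^{\mu,{\rm opt}}[x,v]\subset\Gamma_C$ with $C=C(r,M)$, while the direct method of the calculus of variations gives $\Gamma^{\mu,{\rm opt}}[x,v]\neq\emptyset$. Recall also that $\Theta_r$ is compact and that $\Gamma_C$ is a compact (hence Polish) subset of $\Gamma$.

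The heart of the proof is the closed graph property of the multivalued map $(x,v,\mu)\mapsto\Gamma^{\mu,{\rm opt}}[x,v]$, i.e. the one-dimensional counterpart of Proposition~\ref{sec:setting-notation-5}: if $\mu_i\to\mu$ narrowly in $\cP_{m_0}(\Gamma_C)$, $(x^i,v^i)\to(x,v)$ in $\Theta_r$, and $(\xi^i,\eta^i)\in\Gamma^{\mu_i,{\rm opt}}[x^i,v^i]$ converges uniformly to some $(\xi,\eta)$, then $(\xi,\eta)\in\Gamma^{\mu,{\rm opt}}[x,v]$. The argument combines three ingredients: Lemma~\ref{sec:setting-notation-4}, which yields $m^{\mu_i}(t)\to m^\mu(t)$ narrowly for every $t\in[0,T]$; the continuity of $F$ and $G$ together with dominated convergence (the integrands being bounded by $M$), which handle the $F$- and $G$-terms in $J^{\mu_i}$ exactly as in Proposition~\ref{sec:setting-notation-5}; and the one-dimensional machinery of Section~\ref{sec:optim-contr-probl}, namely Lemma~\ref{sec:optim-contr-probl-4} and Corollary~\ref{sec:clos-graph-prop-1}, supplemented by the delicate Lemma~\ref{sec:clos-graph-prop-8} in the singular cases~(\ref{eq:52})--(\ref{eq:5019bis}). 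I expect the main obstacle to be checking that the arguments of Lemma~\ref{sec:clos-graph-prop-8} still go through when the running cost depends on $i$ through $\mu_i$: this is the case because the competitor constructions and energy estimates of Lemmas~\ref{sec:clos-graph-prop-4}, \ref{sec:clos-graph-prop-5} and~\ref{sec:clos-graph-prop-8} do not involve the cost at all, while the only place where the cost enters — the sign of the limiting cost increment, the analogue of~(\ref{eq:70}) — is controlled because each $\ell^{\mu_i}$ satisfies Assumption~\ref{sec:setting-notations-1bis} uniformly in $i$ and, thanks to the sup-norm convergence $F[m^{\mu_i}(s)]\to F[m^\mu(s)]$ and the equicontinuity of $\{F[m^\mu(s)]:s\in[0,T]\}$ on bounded subsets of $\Xi$ (a consequence of Ascoli's theorem), the cost increment along the relevant vanishing time intervals passes to the limit.

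With the closed graph property in hand, the remainder is verbatim the proof of Theorem~\ref{sec:setting-notation-6}. By Prokhorov's theorem and Lemma~\ref{sec:setting-notation-4}, $\cP_{m_0}(\Gamma_C)$ is a convex and compact subset of $\cP(\Gamma_C)$; it is non empty because the piecewise continuous map $j:\Theta_r\to\Gamma_C$ built in the proof of Proposition~\ref{sec:variant-which-domain} satisfies $j(x,v)\in\Gamma[x,v]$, so $j\sharp m_0\in\cP_{m_0}(\Gamma_C)$. Using the disintegration theorem (Theorem~\ref{sec:setting-notation-1}) with $X=\Gamma_C$, $Y=\Theta_r$, $\pi=e_0$, $\nu=m_0$, one defines the multivalued map $E$ on $\cP_{m_0}(\Gamma_C)$ by~(\ref{eq:44}); a constrained equilibrium for $m_0$ is exactly a fixed point of $E$. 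One checks, as in Steps~1--2 of the proof of Theorem~\ref{sec:setting-notation-6}, that $E(\mu)$ is non empty and convex for every $\mu$ — invoking a Borel measurable selection of the closed-graph map $(x,v)\mapsto\Gamma^{\mu,{\rm opt}}[x,v]$ together with the disintegration formula — and that $E$ has closed graph — combining the Kuratowski convergence theorem for the supports with the closed graph property established in the previous paragraph. Kakutani's fixed point theorem then provides $\mu\in\cP_{m_0}(\Gamma_C)$ with $\mu\in E(\mu)$, i.e. a constrained mean field game equilibrium satisfying~(\ref{eq:43}), which proves the lemma.
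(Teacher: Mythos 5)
Your proposal is correct and follows essentially the same route the paper intends: the paper's own proof is one line (``similar to that of Theorem~\ref{sec:setting-notation-6}''), and you have supplied exactly the intended details, with the appropriate substitutions of Proposition~\ref{sec:variant-which-domain} for Proposition~\ref{sec:contr-probl-conv-1} and of the one-dimensional closed-graph machinery (Theorem~\ref{sec:optim-contr-probl-3}, culminating in Lemma~\ref{sec:clos-graph-prop-8}) for Proposition~\ref{sec:contr-probl-polyg-1}. You are also right to flag the one non-obvious point, namely that the cases \eqref{eq:52}--\eqref{eq:5019bis} can still arise for $(x^i,v^i)\in\Theta_r$ (since \eqref{eq:78} only bounds $(v^i)^3/|x^i|$, it does not force it to vanish), so the $\mu$-dependent analogue of Lemma~\ref{sec:clos-graph-prop-8} is genuinely needed; your observation that the estimate \eqref{eq:70} survives because Assumption~\ref{sec:MFG-contr-accel2} makes $\ell^{\mu_i}$ satisfy Assumption~\ref{sec:setting-notations-1bis} uniformly in $i$, combined with the fact that $F(\cP(K_C))$ is compact in $C^0_b(\Xi)$ and hence equicontinuous on bounded sets, is precisely the ``obvious modification'' the paper alludes to in Proposition~\ref{sec:mean-field-games-6}.
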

\begin{proof}
  The proof is similar to that of  Theorem \ref{sec:setting-notation-6}. We skip it.
\end{proof}
\begin{remark}\label{sec:mean-field-games-4}
Compared to  Theorem \ref{sec:setting-notation-6},  the restrictions made on the support of  $m_0$ are weaker in Lemma~\ref{sec:mean-field-games}, but the latter requires the  additional  Assumption \ref{sec:MFG-contr-accel2}.\\ In Theorem \ref{sec:mean-field-games-5} below, we get rid of the assumptions on the support of $m_0$.
\end{remark}
\begin{theorem}
\label{sec:mean-field-games-5}
Let $m_0$ be a probability measure on $\Xi$ such that 
\begin{equation}
  \label{eq:83}
m_0(\Xi \setminus \Xi^{ \rm{ad}} )=0.
\end{equation}
Under the assumptions made above on $\ell$, $F$ and $G$ (including Assumption \ref{sec:MFG-contr-accel2}), there exists a 
constrained mean field game equilibrium  associated with the initial distribution $m_0$, i.e.  a probability measure $\mu \in \cP_{m_0}(\Gamma)$ such that  (\ref{eq:43}) holds.
\end{theorem}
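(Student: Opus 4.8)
The plan is to approximate $m_0$ by initial distributions supported in the compact sets $\Theta_r$ of~(\ref{eq:78}), to apply Lemma~\ref{sec:mean-field-games} to each of them, and to pass to the limit, the passage to the limit being made possible by the one-dimensional closed graph property of Theorem~\ref{sec:optim-contr-probl-3}, which --- unlike Proposition~\ref{sec:contr-probl-polyg-1} --- holds on the whole of $\Xi^{\rm ad}$. By~(\ref{eq:82}) and $m_0(\Xi\setminus\Xi^{\rm ad})=0$ we have $m_0(\Theta_r)\to 1$ as $r\to+\infty$; fix $r_k\to+\infty$ with $m_0(\Theta_{r_k})>0$ and set $m_{0,k}=m_0(\Theta_{r_k})^{-1}\,m_0|_{\Theta_{r_k}}$, so that $m_{0,k}\to m_0$ narrowly. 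Lemma~\ref{sec:mean-field-games} provides, for each $k$, a constrained equilibrium $\mu_k\in\cP_{m_{0,k}}(\Gamma_{C_k})$ with $C_k=C(r_k,M)$.

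\emph{Step 1: tightness of $(\mu_k)_{k\in\N}$ in $\cP(\Gamma)$.} Fix $\epsilon>0$, pick $r$ with $1-m_0(\Theta_r)<\epsilon$, and let $k_0$ be such that $r_k\ge r$ for $k\ge k_0$. For $k\ge k_0$: by~(\ref{eq:43}), $\mu_k$-almost every trajectory $(\xi,\eta)$ satisfies $(\xi(0),\eta(0))\in\mathrm{supp}(m_{0,k})\subset\Theta_{r_k}$ and $(\xi,\eta)\in\Gamma^{\mu_k,{\rm opt}}[\xi(0),\eta(0)]$, and by Proposition~\ref{sec:variant-which-domain} applied to $J^{\mu_k}$ (with a constant uniform in $\mu_k$) the condition $(\xi(0),\eta(0))\in\Theta_r$ then forces $(\xi,\eta)\in\Gamma_{C(r,M)}$; since $\Theta_r\subset\Theta_{r_k}$, it follows that $\mu_k(\Gamma\setminus\Gamma_{C(r,M)})\le 1-m_{0,k}(\Theta_r)=1-m_0(\Theta_r)/m_0(\Theta_{r_k})\le 1-m_0(\Theta_r)<\epsilon$. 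As the finitely many $\mu_k$ with $k<k_0$ are supported in $\Gamma_{C_k}$, the compact set $\Gamma_{\tilde C_\epsilon}$ with $\tilde C_\epsilon=\max\{C(r,M),C_1,\dots,C_{k_0-1}\}$ satisfies $\mu_k(\Gamma\setminus\Gamma_{\tilde C_\epsilon})<\epsilon$ for all $k$. By Prokhorov's theorem a subsequence, still denoted $(\mu_k)$, converges narrowly to some $\mu\in\cP(\Gamma)$; since $e_0$ is continuous, $e_0\sharp\mu=\lim_k e_0\sharp\mu_k=m_0$, i.e. $\mu\in\cP_{m_0}(\Gamma)$.

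\emph{Step 2: $\mu$ is a constrained equilibrium.} Let $(\xi,\eta)\in\mathrm{supp}(\mu)$. By Kuratowski's theorem (as in the proof of Theorem~\ref{sec:setting-notation-6}) there are $(\xi^k,\eta^k)\in\mathrm{supp}(\mu_k)$ with $(\xi^k,\eta^k)\to(\xi,\eta)$ in $C^1\times C^0$; write $(x^k,v^k)=(\xi^k(0),\eta^k(0))\in\mathrm{supp}(m_{0,k})\subset\Xi^{\rm ad}$, so that $(x^k,v^k)\to(x,v)=(\xi(0),\eta(0))\in\mathrm{supp}(m_0)$ and, by~(\ref{eq:43}), $(\xi^k,\eta^k)\in\Gamma^{\mu_k,{\rm opt}}[x^k,v^k]$. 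We conclude $(\xi,\eta)\in\Gamma^{\mu,{\rm opt}}[x,v]$, hence $\mathrm{supp}(\mu)\subset\bigcup_{(x,v)\in\mathrm{supp}(m_0)}\Gamma^{\mu,{\rm opt}}[x,v]$, i.e.~(\ref{eq:43}) for $\mu$, by invoking the mean-field version of Theorem~\ref{sec:optim-contr-probl-3}: the multivalued map $(x,v,\nu)\mapsto\Gamma^{\nu,{\rm opt}}[x,v]$ on $\Xi^{\rm ad}\times\cP(\Gamma)$ has closed graph for the uniform convergence of trajectories and the narrow convergence of measures. This variant follows by rerunning the proof of Theorem~\ref{sec:optim-contr-probl-3}: Lemmas~\ref{sec:optim-contr-probl-4}, \ref{sec:clos-graph-prop-4} and~\ref{sec:clos-graph-prop-5} depend only on the kinetic energy and on the geometry of $\Xi$ and are unchanged, while in the cost-dependent parts (Corollary~\ref{sec:clos-graph-prop-1} and Lemma~\ref{sec:clos-graph-prop-8}) one uses $m^{\mu_k}(t)\to m^\mu(t)$ narrowly for all $t$ (Lemma~\ref{sec:setting-notation-4}) together with the continuity of $F,G$ and dominated convergence --- exactly as in Proposition~\ref{sec:setting-notation-5} --- to pass to the limit in the running and terminal costs, and Assumption~\ref{sec:MFG-contr-accel2} in place of Assumption~\ref{sec:setting-notations-1bis} at the step~(\ref{eq:70}). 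Existence of a minimizer of $J^\mu$ over $\Gamma[x,v]$ for $(x,v)\in\Xi^{\rm ad}$ is standard.

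I expect the main obstacle to be Step 1, and behind it Proposition~\ref{sec:variant-which-domain}: the crucial point, special to $n=1$, is that the $W^{1,2}$-energy of the optimal trajectories issued from $\Theta_r$ is bounded by a constant depending on $r$ alone, which, combined with $m_0(\Theta_r)\to 1$, prevents mass from leaking onto trajectories of unbounded energy even though $C_k\to+\infty$. Within the closed graph argument, the point that genuinely needs rechecking with $\nu=\mu_k$ varying is Lemma~\ref{sec:clos-graph-prop-8}, where the singularity of the value function near $(0,0)$ (and symmetrically near $(-1,0)$) is controlled by the explicit lower bounds of Lemmas~\ref{sec:clos-graph-prop-4}--\ref{sec:clos-graph-prop-5}; since those bounds involve only the energy, the argument survives once Assumption~\ref{sec:MFG-contr-accel2} is assumed.
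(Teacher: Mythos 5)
Your proposal follows the paper's own proof essentially step for step: truncate $m_0$ to $\Theta_{r_k}$, apply Lemma~\ref{sec:mean-field-games}, prove tightness via the uniform bounds of Proposition~\ref{sec:variant-which-domain}, extract a narrow limit by Prokhorov, and pass to the limit in the support inclusion via Kuratowski's theorem and the mean-field version of the one-dimensional closed graph property (the paper records this last ingredient as Proposition~\ref{sec:mean-field-games-6}). The argument is correct, and your closing remarks correctly identify the two places where the paper's extra work genuinely matters: the $r$-only bound in Proposition~\ref{sec:variant-which-domain} and the adaptation of Lemma~\ref{sec:clos-graph-prop-8} to a varying measure.
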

\begin{proof}
From (\ref{eq:82}) and (\ref{eq:83}),  there exists $n_0>0$ such that $m_0(\Theta_{n})>0$ for $n>n_0$. 
For $n>n_0$, we set $m_{0,n}= \frac 1 {m_0 ( \Theta_n)} m_0|_{\Theta_n}$. With a slight abuse of notation, let $m_{0,n}$ also denotes the probability on $\Xi$ obtained by extending $m_{0,n}$ by $0$ outside $\Theta_n$, i.e.
$m_{0,n}(B)= \frac 1 {m_0 ( \Theta_n)} m_0(B\cap \Theta_n)$, for any measurable subset $B$ of $\Xi$. Let $\mu_n\in \cP_{m_{0,n}}\left( \Gamma_{C(n,M)}\right)$ be   a constrained mean field game equilibrium  associated with the initial distribution $m_{0,n}$, the existence of which comes from Lemma~\ref{sec:mean-field-games}. With a similar abuse of notations as above,  let $\mu_{n}$ also denote the probability on $\Gamma$ obtained by extending $\mu$ by $0$ outside $ \Gamma_{C(n,M)}$.

\medskip

We claim that $\{\mu_n, n>n_0\}$ is tight in $\cP(\Gamma)$, i.e. that for each $\epsilon>0$, there exists a compact $K_\epsilon \subset \Gamma$ such that
\begin{equation}
  \label{eq:84}
\mu_n (\Gamma\setminus K_\epsilon)<\epsilon,\quad \quad \hbox{for each }n>n_0.
\end{equation}
From the increasing character of the sequence $\Theta_n$,  (\ref{eq:82}) and (\ref{eq:83}), we observe that for each $\epsilon>0$, there exists $n_1>0$ such that $m_0(\Theta_{n_1})>1-\epsilon$. Let us prove (\ref{eq:84}) with $K_\epsilon= \Gamma_{C(n_1, M)}$. \\
Since for all $n>n_0$, $\mu_n\in \cP_{m_{0,n}}(\Gamma)$ is a MFG equilibrium, we see that for all measurable $B\subset \Xi$,
\begin{displaymath}
  \begin{split}
  m_{0,n}(B)
=
\mu_n\left\{   (\xi,\eta)\in\hbox{supp}(\mu_n)\;:\; (\xi(0),\eta(0))\in B  \right\}
\le
\mu_n\left( \bigcup_{(x,v)\in B }  \Gamma^{{\rm{opt}},\mu_n} [x,v] \right)  .
  \end{split}
\end{displaymath}
Taking $B=\Theta_{n_1}$ and using Proposition~\ref{sec:variant-which-domain}, we see that 
\begin{displaymath}
   m_{0,n}\left(\Theta_{n_1} \right)
\le \mu_n\left( \bigcup_{(x,v)\in \Theta_{n_1}  }  \Gamma^{{\rm{opt}},\mu_n} [x,v] \right)  
\le \mu_n\left( \Gamma_{C(n_1,M)}\right),
\end{displaymath}
(note that the constant $C(n_1,M)$ does not depend on $\mu_n$).\\
On the other hand,
\begin{displaymath}
  \begin{array}[c]{rcll}
  m_{0,n}\left(\Theta_{n_1}\right) &\ge & m_0(\Theta_{n_1})> 1 -\epsilon, \quad &\hbox{if } n> n_1, \\
  m_{0,n}\left(\Theta_{n_1}\right) &=&1,    \quad &\hbox{if } n_0< n \le  n_1.
  \end{array}
 \end{displaymath}
In both cases, $\mu_n\left( \Gamma_{C(n_1,M)}\right) \ge 1-\epsilon$ and therefore $\mu_n\left( \Gamma \setminus \Gamma_{C(n_1,M)}\right) \ge 1-\epsilon$, and the claim is proved.

\medskip

Thanks to Prokhorov theorem, possibly after the  extraction of subsequence that we still name $\mu_n$, we deduce that there exists $\mu\in \cP(\Gamma)$ such that $\mu_n$ converges narrowly to $\mu$. 

\medskip

We claim that $\mu$ is a MFG equilibrium related to $m_0$. We already know that $\mu\in \cP(\Gamma)$. There remains to prove that 
\begin{itemize}
\item $\mu \in \cP_{m_0}(\Gamma)$, i.e. that $e_0\sharp \mu=m_0$
\item  $\mu$ satisfies (\ref{eq:43}).
\end{itemize}

The fact that $e_0\sharp \mu=m_0$ stems from Lemma \ref{sec:setting-notation-4} and from the fact that $m_{0,n}$ narrowly converges to $m_0$.

In order to prove (\ref{eq:43}), we recall that from Kuratowski's theorem, see \cite{MR2129498},
\begin{displaymath}
  {\rm{supp}}(\mu)\subset \liminf_{n\to \infty}  {\rm{supp}}(\mu_n),
\end{displaymath}
which means that for all $(\xi, \eta)\in {\rm{supp}}(\mu)$, there exists a sequence  $(\xi_n, \eta_n)\in {\rm{supp}}(\mu_n)$ such that $(\xi_n, \eta_n)\to (\xi, \eta)$ uniformly. As a consequence, setting $(x_n,v_n)= (\xi_n(0), \eta_n(0))$ and $(x,v)= (\xi(0), \eta(0))$, $\lim_{n\to \infty} (x_n,v_n)=(x,v)$ and $(\xi_n, \eta_n)\in \Gamma^{{\rm{opt}},\mu_n}[x_n,v_n]$.  Applying Proposition \ref{sec:mean-field-games-6} below, which is a generalization of Theorem~\ref{sec:optim-contr-probl-3},  we may pass to the limit and conclude that $(\xi, \eta)\in \Gamma^{{\rm{opt}},\mu}[x,v]$, which achieves the proof.
\end{proof}

\begin{proposition}
  \label{sec:mean-field-games-6}
Under the assumptions made above on $\ell$, $F$ and $G$ (including Assumption \ref{sec:MFG-contr-accel2}), consider a sequence $(\mu ^i)_{i\in \N}$, $\mu ^i\in \cP(\Gamma)$, such that $\mu^ i$ converges narrowly  to $\mu\in \cP(\Gamma)$.  Consider a sequence $(\xi^i,\eta^i)_{i\in \N} $, $(\xi^i,\eta^i)\in \Gamma$, such that
\begin{enumerate}
\item $(\xi^i,\eta^i)\in \Gamma^{{\rm{opt}}, \mu^ i} [x^i, v^i]$, where $(x^i, v^i)=(\xi^i(0),\eta^i(0))$
\item  $(\xi^i,\eta^i)$ tends to $ (\xi,\eta)\in \Gamma[x,v]$ uniformly, where $(x,v)= \lim_{i\to \infty}(x^i, v^i)$. 
\end{enumerate}
Then $(\xi,\eta)\in \Gamma^{{\rm{opt}}, \mu} [x, v]$.
\end{proposition}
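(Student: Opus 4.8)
The plan is to mimic the proof of Theorem~\ref{sec:optim-contr-probl-3}, carrying along the coupling terms. For $\nu\in\cP(\Gamma)$ denote by $u^\nu(x,v)=\inf_{(\tilde\xi,\tilde\eta)\in\Gamma[x,v]}J^\nu(\tilde\xi,\tilde\eta)$ the value function of the control problem with cost $J^\nu$, and set $\ell^i(y,w,s)=\ell(y,w,s)+F[m^{\mu^i}(s)](y,w)$ and $g^i(y,w)=G[m^{\mu^i}(T)](y,w)$. Arguing as in Remark~\ref{sec:setting-notation-7} (in the one-dimensional setting), for every $i$ the pair $(\ell^i,g^i)$ satisfies Assumption~\ref{sec:optim-contr-probl-2} with a constant $M$ independent of $i$, and, by Assumption~\ref{sec:MFG-contr-accel2}, it also satisfies the analogue of Assumption~\ref{sec:setting-notations-1bis}. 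Consequently all the lemmas leading to Theorem~\ref{sec:optim-contr-probl-3} apply to each control problem with cost $J^{\mu^i}$, uniformly in $i$; note in particular that the kinematic estimates of Lemmas~\ref{sec:clos-graph-prop-4} and~\ref{sec:clos-graph-prop-5} do not involve the running cost and are used verbatim.

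The one genuinely new ingredient is the passage to the limit in the coupling terms. Since $\mu^i\to\mu$ narrowly, Lemma~\ref{sec:setting-notation-4} gives $m^{\mu^i}(t)\to m^\mu(t)$ narrowly for each $t\in[0,T]$; hence, exactly as in the proof of Proposition~\ref{sec:setting-notation-5}, the continuity and boundedness of $F$ and $G$ together with dominated convergence imply that whenever $(\zeta^i,\vartheta^i)\to(\zeta,\vartheta)$ uniformly on $[0,T]$ one has $\int_0^T\ell^i(\zeta^i(s),\vartheta^i(s),s)\,ds\to\int_0^T(\ell+F[m^\mu(s)])(\zeta(s),\vartheta(s))\,ds$ and $g^i(\zeta^i(T),\vartheta^i(T))\to G[m^\mu(T)](\zeta(T),\vartheta(T))$. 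Moreover, on an interval $[0,\theta_i]$ with $\theta_i\to0$ the contribution of the $F$--term is bounded by $M\theta_i=o(1)$, and a constant competitor at $(0,0)$ gives the uniform bound $u^{\mu^i}(0,0)\le M(T+1)$, whence $\liminf_{i\to\infty}u^{\mu^i}(0,0)\ge u^\mu(0,0)$ by the lower semicontinuity of the energy and the convergences above.

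Next, after extracting a subsequence, I would distinguish cases. If $(x^i,v^i)$ eventually satisfies one of the five conditions of Lemma~\ref{sec:optim-contr-probl-4} applied to the limiting cost $J^\mu$, the argument of Corollary~\ref{sec:clos-graph-prop-1} goes through: given $(\tilde\xi,\tilde\eta)\in\Gamma[x,v]$ with $\tilde\eta\in W^{1,2}$, use Lemma~\ref{sec:optim-contr-probl-4} to build $(\tilde\xi^i,\tilde\eta^i)\in\Gamma[x^i,v^i]$ converging to it in $W^{2,2}(0,T;\R)\times W^{1,2}(0,T;\R)$, write $J^{\mu^i}(\xi^i,\eta^i)\le J^{\mu^i}(\tilde\xi^i,\tilde\eta^i)$, pass to the limit on the right--hand side and on the coupling part of the left--hand side via the paragraph above, and use weak $L^2$ lower semicontinuity of the acceleration term to get $J^\mu(\xi,\eta)\le J^\mu(\tilde\xi,\tilde\eta)$. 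In the remaining symmetrical cases \eqref{eq:52}--\eqref{eq:5019bis}, I would reproduce the three steps of Lemma~\ref{sec:clos-graph-prop-8} with $\ell$ replaced by $\ell^i$ and $g$ by $g^i$: in Step~1 the competitor $(\widetilde\xi^i,\widetilde\eta^i)$ is built as there, its tail following a $J^\mu$--optimal trajectory from $(0,0)$, so that $J^{\mu^i}(\widetilde\xi^i,\widetilde\eta^i)=\tfrac29\tfrac{(v^i)^3}{|x^i|}+u^\mu(0,0)+o(1)$; Step~2 shows $\lim_i\theta_i=0$ by contradiction exactly as before, the key inequality \eqref{eq:70} now reading $\liminf_i\int_0^{\theta_i}\bigl(\ell^i(0,\eta^i(s),s)-\ell^i(0,0,s)\bigr)\,ds\ge0$, which holds by Assumption~\ref{sec:MFG-contr-accel2}, and using $\liminf_i u^{\mu^i}(0,0)\ge u^\mu(0,0)$ from the previous paragraph; Step~3 then concludes, via Lemma~\ref{sec:clos-graph-prop-5}, the uniform convergence of $(\xi^i,\eta^i)$ and the coupling--term convergence, that $\one_{(\theta_i,T)}\frac{d\eta^i}{dt}\rightharpoonup\frac{d\eta}{dt}$ weakly in $L^2(0,T)$ and that $J^\mu(\xi,\eta)\le u^\mu(0,0)$, hence $(\xi,\eta)\in\Gamma^{\mu,{\rm opt}}[0,0]$.

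I expect the main obstacle to be, as in Lemma~\ref{sec:clos-graph-prop-8}, the corner case \eqref{eq:52}: there the value function $u^\mu$ is singular at $(0,0)$ and $\int_0^T|\frac{d\eta^i}{dt}|^2$ may blow up, so one cannot conclude by mere lower semicontinuity and must reproduce the delicate kinematic estimates while exploiting the sign condition in Assumption~\ref{sec:MFG-contr-accel2}. The additional difficulty compared with Lemma~\ref{sec:clos-graph-prop-8} is that the cost functional itself varies with $i$: this is absorbed by the dominated--convergence argument of the second paragraph (on $[\theta_i,T]$, where the trajectories converge uniformly) together with the $O(\theta_i)$ bound on the vanishing initial layer $[0,\theta_i]$, the only point needing a little care being the replacement of $u(0,0)$ by $u^{\mu^i}(0,0)$ in Step~2 and its comparison with $u^\mu(0,0)$.
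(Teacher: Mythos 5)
Your proposal is correct and follows precisely the route the paper indicates for this result, namely transposing the proof of Theorem~\ref{sec:optim-contr-probl-3} (Lemmas~\ref{sec:optim-contr-probl-4}--\ref{sec:clos-graph-prop-8}) to the $i$-dependent costs $J^{\mu^i}$, using Assumption~\ref{sec:MFG-contr-accel2} in place of Assumption~\ref{sec:setting-notations-1bis} and the coupling-term convergence already exploited in Proposition~\ref{sec:setting-notation-5}; you correctly isolate the one genuinely new ingredient, the lower bound $\liminf_i u^{\mu^i}(0,0)\ge u^\mu(0,0)$, which is what makes the Step-1 competitor (whose tail is a $J^\mu$-optimal trajectory) usable in the Step-2 contradiction. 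A minor imprecision: the constant competitor bound is not $M(T+1)$ since $M$ only controls $\ell_-$, $\|F\|$, $\|G\|$ and not $\ell_+$, but $\sup_{t\in[0,T]}\ell(0,0,t)$ is finite by continuity, so a uniform upper bound on $u^{\mu^i}(0,0)$ does hold and the precompactness argument goes through unchanged.
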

\begin{proof}
We skip the proof, because it follows the same lines as that of Theorem ~\ref{sec:optim-contr-probl-3},
(see paragraph~\ref{sec:optim-contr-probl}). In particular, it includes  an adaptation of 
Lemma~\ref{sec:clos-graph-prop-8}. The necessary modifications are obvious.
\end{proof}

\section{State constrained optimal control problems and mean field games in a convex polygonal domain of $\R^2$}\label{sec:contr-probl-polyh}

Let  $\Omega$ be a bounded and convex domain of $\R^2$  with a polygonal boundary $\partial \Omega$.
 For $x\in \overline \Omega$,  the tangent cone to $\Omega$ at $x$ is defined by 
 \begin{displaymath}
   T_\Omega(x)=\left\{ v\in \R^2: \;  x+tv \in \overline \Omega, \hbox{ for $t>0$ small enough}\right \}.
 \end{displaymath}
Note that $T_\Omega(x)=\R^2$ if $x\in \Omega$.
A vector $v\in \R^2$ points outward $\Omega$ at $x\in \partial \Omega$ if $v\notin T_\Omega(x)$.\\
Let $(\nu_i)_{0\le i< N}$ be the vertices of $\partial \Omega$, labeled in such a way that $\partial \Omega= \bigcup_{i=0}^{N-1}   \gamma_i $, where $\gamma_i= [\nu_i, \nu_{i+1}]$ and $\nu_{N}=\nu_0$. We may assume that three successive vertices are not aligned. We are going to use the notation $(\nu_i,\nu_{i+1})$ for the open straight line segment between  $\nu_i$ and $\nu_{i+1}$.
 For $i\in \{0,\dots, N-1\}$, let $n_i$ be the unitary  normal vector to $\gamma_i$ pointing outward  $\Omega$.
It is easy to see that $ T_\Omega(\nu_i)=\{x\in \R^2 \;:\;  n_i\cdot x \le 0  \hbox{ and }  n_{i-1}\cdot x \le 0  \}$,
setting $n_{-1}=n_{N-1}$.
 Since $\Omega$ is convex, $\overline \Omega$ coincides locally near $\nu_i$ with $\nu_i+  T_\Omega(\nu_i)$.

The optimal control problem is set exactly as in Section~\ref{sec:state-constr-optim-1}: it consists of minimizing $J(\xi, \eta, \eta')$ given by (\ref{eq:2}) on the dynamics given by (\ref{eq:1}) and staying in $\Xi=\overline \Omega \times \R^2$. The set $\Xi^{\rm{ad}}$ is defined by (\ref{eq:6}).
  \subsection{Closed graph properties}
\label{sec:clos-graph-prop-13}
 The closed graph result given in Proposition \ref{sec:clos-graph-prop-14} below is similar to that contained in Proposition~\ref{sec:contr-probl-polyg-1}, but special conditions are needed near the vertices of $\partial \Omega$:
\begin{proposition}\label{sec:clos-graph-prop-14}
  Consider a closed subset $\Theta$ of $\Xi^{\rm ad}$.
Assume that  all sequence  $(x^i, v^i)_{i\in \N}$  such that for all $i\in \N$, $ (x^i, v^i)\in \Theta$ and $\lim_{i\to +\infty} (x^i, v^i)=(x, v)\in \Theta$, the following holds:
  \begin{enumerate}
  \item If  $x\in (\nu_j,\nu_{j+1})$ for some $j\in \{0,\dots, N-1\}$
(recall that $\nu_{N}=\nu_0$),  then 
\begin{equation}
  \label{eq:85}
(v^i \cdot n_j)_+^3 = o\Bigl( (x-x^i)\cdot n_j\Bigr) ;
\end{equation}
\item if $x=\nu_j$  for some $j\in \{0,\dots, N-1\}$ and $v\not =0$, then 
 \begin{equation}
   \label{eq:86}
(v^i \cdot n_k)_+^3 = o\Bigl( (x-x^i)\cdot n_k\Bigr), \quad \hbox{for } k=j-1, j, 
 \end{equation}
recalling that $n_{-1}= n_{N-1}$;
\item  If $x=\nu_j$  for some $j\in \{0,\dots, N-1\}$ and $v =0$, then
  \begin{equation}
\label{eq:87}
(v^i \cdot n_k)_+  \left( |x-x^i|^{\frac 2 3} + |v^i|^2\right) = o\Bigl( (x-x^i)\cdot n_k\Bigr), \quad \hbox{for } k=j-1, j;
 \end{equation}
  \end{enumerate}
then  the graph of the multivalued map $ \Gamma^{\rm opt}: \; \Theta \rightarrow \Gamma$, $(x,v) \mapsto \Gamma^{\rm opt}[x,v] $,
is closed in the sense given in Proposition ~\ref{sec:contr-probl-polyg-1}.
\end{proposition}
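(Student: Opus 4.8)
### Proof Proposal

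The plan is to mimic the architecture of the proof of Proposition~\ref{sec:contr-probl-polyg-1}: reduce the closed graph property to two ingredients, namely (a) an approximation lemma asserting that any admissible trajectory $(\widetilde\xi,\widetilde\eta)$ with finite energy starting from the limit point $(x,v)$ can be approximated in $W^{2,2}\times W^{1,2}$ by admissible trajectories $(\widetilde\xi^i,\widetilde\eta^i)$ starting from $(x^i,v^i)$, and (b) the lower-semicontinuity/optimality-passage argument of Lemma~\ref{sec:clos-graph-prop-7}, which is purely variational and carries over verbatim once (a) is available. Since the only place where the geometry of $\Omega$ enters is (a), the real work is to establish the analogue of Lemma~\ref{sec:clos-graph-prop-12} for the convex polygon. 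I would split the proof into cases according to where the limit point $x$ lies: $x\in\Omega$ (interior), $x\in(\nu_j,\nu_{j+1})$ on the relative interior of an edge, and $x=\nu_j$ a vertex, this last case being subdivided according to whether $v=0$ or $v\neq 0$.

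For $x\in\Omega$ the construction is identical to case~1 of Lemma~\ref{sec:clos-graph-prop-12} (correct the initial position and velocity with the cubic polynomial $Q_{\overline t,\delta x^i,\delta v^i,0,0}$ of Definition~\ref{sec:clos-graph-prop-2} on a short time interval where $\xi$ stays strictly inside $\Omega$). For $x$ on an open edge $(\nu_j,\nu_{j+1})$, locally $\overline\Omega$ is the half-space $\{y:\,(y-x)\cdot n_j\le 0\}$, so the boundary is flat: I can work directly in Cartesian coordinates with $n_j$ playing the role of $e_n$, and the whole of cases~2 and~3 of Lemma~\ref{sec:clos-graph-prop-12} transcribes without the diffeomorphism $\Phi_x$ (which is here the identity up to a rigid motion), the restrictive hypothesis \eqref{eq:85} being exactly \eqref{eq:12} with $d(x^i)$ replaced by $(x^i-x)\cdot n_j$. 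The delicate point is the vertex $x=\nu_j$: near $\nu_j$, by convexity, $\overline\Omega$ coincides with $\nu_j+T_\Omega(\nu_j)=\{y:\,(y-\nu_j)\cdot n_j\le 0,\ (y-\nu_j)\cdot n_{j-1}\le 0\}$, the intersection of two half-spaces. If $v\neq 0$, then $v\in T_\Omega(\nu_j)$ with $v\cdot n_k\le 0$ for $k=j-1,j$; the idea is again a first short phase correcting simultaneously the two components $(\cdot)\cdot n_{j-1}$ and $(\cdot)\cdot n_j$ so as to match $\nu_j$ and $v$ (one can decompose $\R^2$ using the basis dual to $(n_{j-1},n_j)$ and handle each inequality constraint with the same cubic-polynomial estimate as in case~3(b), the admissibility restriction being \eqref{eq:86} for each $k$), followed by a second phase of fixed positive length gluing to the reference trajectory $(\widetilde\xi,\widetilde\eta)$ shifted in time — exactly Remarks~\ref{item:1}–\ref{item:2}. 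The subtlety is that the constraint now involves two faces at once, so one must check that the cubic correction built to satisfy one inequality does not destroy the other; I would do this by projecting onto each $n_k$ separately and noting that the correction polynomial can be taken with components along the \emph{dual} directions, so the $n_k$-component only feels its own correction plus lower-order coupling that is absorbed by making the phase short.

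The genuinely new case is $x=\nu_j$ with $v=0$ (and $(x^i,v^i)$ possibly pointing outward one or both faces), where the sharper condition \eqref{eq:87} appears — note the extra factor $|x-x^i|^{2/3}+|v^i|^2$, which is absent in the smooth-boundary Proposition~\ref{sec:contr-probl-polyg-1}. The reason is that, unlike an edge, a vertex forces the trajectory to kill \emph{both} normal velocity components while staying in a cone with nonempty interior but of bounded opening; the competitor from Proposition~\ref{sec:variant-which-domain}-type constructions (or rather its polygonal analogue) gives, when one estimates the energy of the cubic correction against \emph{both} active constraints, an energy of order $((v^i\cdot n_k)_+)^3/((x-x^i)\cdot n_k)$ only if one can afford a time step $t_{i,1}\sim (x-x^i)\cdot n_k/(v^i\cdot n_k)_+$; but the gluing phase that follows must bring the state from $(\xi^i(t_{i,1}),\eta^i(t_{i,1}))$, which differs from $(\nu_j,0)$ by a quantity controlled by $|x-x^i|$ and $|v^i|$, back onto $\widetilde\xi$, and since $v=0$ the reference trajectory's own speed is small near $0$, one loses the freedom of a fixed-length second phase; a careful bookkeeping then shows one needs $(v^i\cdot n_k)_+\,(|x-x^i|^{2/3}+|v^i|^2)=o((x-x^i)\cdot n_k)$. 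I would isolate this computation as the polygonal counterpart of Lemmas~\ref{sec:clos-graph-prop-4}–\ref{sec:clos-graph-prop-5}, though here, the geometry being only locally conical rather than one-dimensional, I expect to get away with the cubic-polynomial estimates rather than the full explicit minimization.

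The main obstacle, therefore, is the two-face interaction at a vertex: in the smooth case a single local chart flattens the boundary and the whole analysis reduces to controlling one scalar coordinate, whereas at a corner two linear inequality constraints are simultaneously active and the approximating trajectory must satisfy both throughout the correction phase. I expect the resolution to consist of (i) choosing the cubic correction with components along the directions dual to $(n_{j-1}, n_j)$ so that each face-constraint is governed by an independent scalar polynomial inequality of exactly the type already handled (the sign function $\theta\mapsto\tfrac34-3\theta+\tfrac{15}4\theta^2-\theta^3\ge 0$ on $[0,1]$ reused), and (ii) tuning the single small time $t_{i,1}$ to simultaneously satisfy the two energy-smallness requirements, which is possible precisely under \eqref{eq:86} (case $v\neq 0$) or \eqref{eq:87} (case $v=0$); once the competitor sequence is produced, the optimality passage is word-for-word Lemma~\ref{sec:clos-graph-prop-7}, and the proof of the proposition concludes exactly as the proof of Proposition~\ref{sec:contr-probl-polyg-1} after extracting a subsequence falling into one of the listed cases.
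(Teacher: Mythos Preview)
Your overall architecture is correct and matches the paper: the closed graph property is reduced to an approximation lemma (the polygonal analogue of Lemma~\ref{sec:clos-graph-prop-12}, stated in the paper as Lemma~\ref{sec:contr-probl-polyg}) together with the variational passage of Lemma~\ref{sec:clos-graph-prop-7}, and the case split (interior / open edge / vertex with $v\neq 0$ / vertex with $v=0$) is exactly the paper's.

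Two points where your route diverges from the paper's are worth noting. First, at a vertex with $v\neq 0$ (say $v\cdot n_j=0$, $v\cdot n_{j-1}<0$), the paper does \emph{not} use the basis dual to $(n_{j-1},n_j)$; it reuses the edge construction \eqref{eq:94} verbatim with $n_j$ and $n_j^\perp$, and then checks the second constraint $(\xi^i(s)-\nu_j)\cdot n_{j-1}\le 0$ separately by showing $\eta^i(s)\cdot n_{j-1}\to v\cdot n_{j-1}<0$ uniformly on $[0,t_{i,1}]$. Your dual-basis idea would decouple the two inequalities and is arguably cleaner, but it is a different construction. Second, and more substantively, for the vertex with $v=0$ the paper's treatment is much simpler than you anticipate: a \emph{single} cubic $Q_{t_i,x^i,v^i,\nu_j,0}$ drives the state to $(\nu_j,0)$ on $[0,t_i]$, after which one follows the time-shifted reference $\xi(\cdot-t_i)$; there is no second correction phase, no loss of a fixed-length gluing window, and no polygonal analogue of Lemmas~\ref{sec:clos-graph-prop-4}--\ref{sec:clos-graph-prop-5}. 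Condition \eqref{eq:87} drops out of the elementary balance between admissibility, which forces $t_i\le 3\min_k (x-x^i)\cdot n_k/(v^i\cdot n_k)_+$, and energy smallness, which forces $t_i\gg |x-x^i|^{2/3}$ and $t_i\gg |v^i|^2$. Your explanation of \eqref{eq:87} via a degraded second phase is not wrong in spirit but is more involved than what is actually needed.
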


\begin{remark}
  \label{sec:clos-graph-prop-10}
It is easy to find  sets $\Theta$ which fulfill the assumption of Proposition
\ref{sec:clos-graph-prop-14}, for example:
\begin{enumerate}
\item   a compact subset $\Theta$ of $\R^2\times \R^2$ contained in  $\Omega \times \R^2$ ;
\item  for $j=0,\cdots, N-1$, let $D_j$ be the straight line containing $\nu_j$ and $\nu_{j+1}$; for $\rho>1$, set 
  \begin{displaymath}
    S_j=\left\{(x,v)\in \overline \Omega \times \R^2:  v\cdot n_j \le \hbox{dist}(x, D_j)^\frac \rho 3 \right \}.
  \end{displaymath}
If $\Theta$ is  a closed subset $\R^2\times \R^2$ contained 
in  $\left(\Xi^{\rm ad} \cap \bigcap_{j=0}^{N-1} S_j\right) \setminus  \left(\bigcup_{j=0}^{N-1} \{\nu_j\}\times \R^2\right)$, then $\Theta$ fulfills the assumption   of Proposition~\ref{sec:clos-graph-prop-14}.
\item For brevity, we do not supply  examples of sets $\Theta$ which intersect $ \bigcup_{j=0}^{N-1} \{\nu_j\}\times \R^2$, although it is possible to do so. 
\end{enumerate}
\end{remark}
The proof of Proposition \ref{sec:clos-graph-prop-14} relies on  the counterpart of Lemma~\ref{sec:clos-graph-prop-7} whose proof is exactly the same,
and on   Lemma \ref{sec:contr-probl-polyg} below which is the counterpart of Lemma~\ref{sec:clos-graph-prop-12}:
\begin{lemma}
  \label{sec:contr-probl-polyg}
  Consider  $(x,v)\in\Xi^{\rm{ad}}$, $(\xi,\eta)\in \Gamma[x,v]$ such that $\eta\in W^{1,2} (0,T;\R^2)$ and a sequence $(x^i, v^i)_{i\in \N}$ such that for all $i\in \N$,   $(x^i, v^i)\in\Xi^{\rm{ad}}$ 
  and $\ds \lim_{i\to \infty} (x^i, v^i)= (x,v)$.
  \\
  Assume that one among the following seven conditions is true:
  \begin{enumerate}
  \item $x\in \Omega$
  \item $x\in (\nu_j, \nu_{j+1})$,  $v\cdot n_j < 0$  (hence $v^i\cdot n_j< 0$ for $i$ large enough)
  \item $x\in (\nu_j, \nu_{j+1})$, $v\cdot n_j = 0$ and one among  the following properties is true:
    \begin{enumerate}
    \item   $v^i\cdot n_j\le 0$ for $i$ large enough
    \item  for $i$ large enough, $v^i\cdot n_j> 0$  (hence $(x-x^i)\cdot n_j>0$) and
      \begin{equation}
        \label{eq:88}
        \lim_{i\to \infty}  
         \frac { (v^i\cdot n_j)^3}{(x-x^i)\cdot n_j}=0
      \end{equation}
    \end{enumerate}
  \item $x=\nu_j$, $v\cdot n_j< 0$ and $v\cdot n_{j-1}< 0$ (setting $n_{-1}=n_{N-1}$)
  \item $x=\nu_j$,   $v\cdot n_j= 0$ and $v\cdot n_{j-1}< 0$  and one among the two following properties holds
    \begin{enumerate}
    \item $v^i\cdot n_j\le 0$ for $i$ large enough
    \item   $v^i\cdot n_j>0$  and  (\ref{eq:88}) holds for  $i$ large enough.
    \end{enumerate}
  \item   $x=\nu_j$,   $v\cdot n_{j-1}= 0$ and $v\cdot n_{j}< 0$  and one among the two following properties holds
    \begin{enumerate}
    \item $v^i\cdot n_{j-1}\le 0$ for $i$ large enough
    \item   $v^i\cdot n_{j-1}>0$  and  (\ref{eq:88}) holds (replacing $j$ by $j-1$) for  $i$ large enough
    \end{enumerate}
  \item $x=\nu_j$,   $v=0$  and 
    \begin{equation}
      \label{eq:89}
      \lim_{i\to \infty}  \max_{k\in \{j-1,j\}}
      \frac { (v^i\cdot n_k)_+} {|(x^i -x)\cdot n_k |}   \left(|x-x^i|^{\frac 2 3 }    +|v^i|^{2}\right)    = 0,
    \end{equation}
with the convention that $    \frac { (v^i\cdot n_k)_+} {|(x^i -x)\cdot n_k |} =0$ if $(v^i\cdot n_k)_+=0$.
  \end{enumerate}
 Then  there exists a sequence  $(\xi^i,\eta^i)_{i\in \N}$ such that  $(\xi^i,\eta^i)\in \Gamma [x^i, v^i]$, $\eta^i\in W^{1,2} (0,T;\R^2)$, and  $(\xi^i,\eta^i)$ tends to $(\xi,\eta)$ 
  in $W^{2,2}(0,T;\R^2) \times  W^{1,2}(0,T;\R^2)$, hence uniformly in $[0,T]$.
\end{lemma}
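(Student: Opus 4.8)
The proof will mirror that of Lemma~\ref{sec:clos-graph-prop-12}, case by case, building the competitors from the cubic interpolants $Q_{t,x,v,y,w}$ of Definition~\ref{sec:clos-graph-prop-2}; the counterpart of Lemma~\ref{sec:clos-graph-prop-7} (with the very same proof, based on the lower semicontinuity of the cost along weakly $W^{1,2}$--convergent velocity laws) then yields Proposition~\ref{sec:clos-graph-prop-14}. The main structural difference with the smooth boundary is that here every relevant piece of $\partial\Omega$ is flat: near a point $x$ in the relative interior of an edge $(\nu_j,\nu_{j+1})$ the set $\overline\Omega$ coincides locally with the half-plane $\{y:(y-x)\cdot n_j\le0\}$, and near a vertex $\nu_j$ it coincides locally with $\nu_j+T_\Omega(\nu_j)$. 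Thus in place of the $C^2$ charts $\Phi_x,\Psi_x$ one uses an affine change of variables: at a vertex, $\hat y=A(y-\nu_j)$ with $\hat y_1=(y-\nu_j)\cdot n_j$ and $\hat y_2=(y-\nu_j)\cdot n_{j-1}$, which is invertible because $n_{j-1},n_j$ are linearly independent (three consecutive vertices are not aligned). Consequently all the second-order ``curvature'' terms that entered through $D^2\Psi_x$ in the proof of Lemma~\ref{sec:clos-graph-prop-12} simply disappear. In these vertex coordinates $\overline\Omega$ becomes the quadrant $\{\hat y_1\le0,\ \hat y_2\le0\}$, and keeping a trajectory in $\overline\Omega$ amounts to keeping each of its two components below zero.

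With this reduction, Cases 1, 2 and 3.(a)--(b) are the ``flat'' analogues of Cases 1, 2 and 3 of Lemma~\ref{sec:clos-graph-prop-12}: for $x\in\Omega$ one perturbs $\xi$ on a short initial interval by $Q_{\overline t,x^i-x,v^i-v,0,0}$; for $x$ on an edge with $v\cdot n_j<0$ one joins $(x^i,v^i)$ to $(\xi(t_i),\eta(t_i))$ by the cubic and then follows $\xi$, the argument showing that the $n_j$--component of the cubic stays $\le0$ being word for word the one in the proof of Lemma~\ref{sec:clos-graph-prop-12} (it only uses estimates of the type \eqref{eq:18}--\eqref{eq:19} and the positivity of $\theta\mapsto\frac34-3\theta+\frac{15}{4}\theta^2-\theta^3$ on $[0,1]$); and for $v\cdot n_j=0$ one uses the two/three--phase construction, the restriction \eqref{eq:88} being needed precisely in the subcase $v^i\cdot n_j>0$. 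In Cases 4, 5 and 6 the vertex $\nu_j$ is involved but one of its two faces is strictly inward in the limit ($v\cdot n_k<0$ for some $k\in\{j-1,j\}$), hence $v^i\cdot n_k<0$ for $i$ large; one then keeps the $n_k$--component of the connecting cubic equal to the straight-line drift $\hat x^i_k+\hat v^i_k s$, which stays $\le0$ for free, and drives the other component to the target exactly as in the edge Cases 2--3, recovering \eqref{eq:88} on the remaining (active) face whenever $v^i\cdot n_k>0$. All of this is routine bookkeeping once Lemma~\ref{sec:clos-graph-prop-12} is available.

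\textbf{The one genuinely new point is Case 7}, where $v=0$ at the vertex and both faces may be active at once. The approximating trajectory will have the phase structure of Remarks~\ref{item:1}--\ref{item:2}: on $[0,t_{i,1}]$ take $\hat\xi^i=Q_{t_{i,1},\hat x^i,\hat v^i,0,0}$, whose $k$-th component is, by \eqref{eq:14}, equal to $\bigl(\hat x^i_k(1+2s/t_{i,1})+\hat v^i_k s\bigr)(1-s/t_{i,1})^2$ and is nonpositive on $[0,t_{i,1}]$ exactly when $t_{i,1}\le 3|\hat x^i_k|/\hat v^i_k$ for each $k$ with $\hat v^i_k>0$ (for an index $k$ with $\hat x^i_k=0$, admissibility of $(x^i,v^i)$ forces $\hat v^i_k\le0$ and no constraint arises); on $[t_{i,1},T]$ one sets $\xi^i(s)=A^{-1}\hat\xi^i(s)+\nu_j$ and glues it with the time-translate $\xi(s-t_{i,1})$, which is legitimate since at $s=t_{i,1}$ the trajectory sits exactly at $(\nu_j,0)=(\xi(0),\eta(0))$, so that $\xi^i\in C^1([0,T];\overline\Omega)$ and $(\xi^i,\eta^i)\in\Gamma[x^i,v^i]$. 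The phase-1 energy is $O\bigl(|\hat x^i|^2/t_{i,1}^3+|\hat v^i|^2/t_{i,1}\bigr)$ and $\|\hat\eta^i\|_{L^4(0,t_{i,1})}^4=O\bigl(t_{i,1}(|\hat x^i|/t_{i,1}+|\hat v^i|)^4\bigr)$. \emph{The crux of the proof} is to choose $t_{i,1}$ so that, simultaneously, the cubic stays in the quadrant and the phase-1 energy, $\|\hat\eta^i\|_{L^4}$ and $t_{i,1}$ all tend to zero: admissibility caps $t_{i,1}$ by $3\min_k|\hat x^i_k|/\hat v^i_k$ over the active indices $k$, while a vanishing energy forces $t_{i,1}\gg|\hat x^i|^{2/3}$ and $t_{i,1}\gg|\hat v^i|^2$; such a window is non-empty iff $\frac{\hat v^i_k}{|\hat x^i_k|}\bigl(|\hat x^i|^{2/3}+|\hat v^i|^2\bigr)\to0$ for $k=j-1,j$, which, after translating back $\hat x^i_k=(x-x^i)\cdot n_k$, $\hat v^i_k=v^i\cdot n_k$ and using $|\hat x^i|\asymp|x-x^i|$, $|\hat v^i|\asymp|v^i|$, is exactly hypothesis \eqref{eq:89} (the one-sided $(v^i\cdot n_k)_+$ there matching the fact that an inactive face imposes no cap). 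One also checks along the way that the constants of the affine map $A$, which depend only on the interior angle at $\nu_j$, do not interfere with these asymptotics.

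In every case the $W^{2,2}\times W^{1,2}$ convergence of $(\xi^i,\eta^i)$ to $(\xi,\eta)$ reduces to $\|d\eta^i/dt-d\eta/dt\|_{L^2(0,T)}\to0$: on the matching and phase intervals this is the vanishing of the phase energies together with $\|d\eta/dt\|_{L^2(0,t_{i,1})}\to0$, which holds because $\eta\in W^{1,2}$ and $t_{i,1}\to0$; on the remaining interval it is the $L^2$--continuity of translations applied to $d\eta/dt$. The uniform convergence on $[0,T]$ then follows from the Sobolev embedding $W^{2,2}(0,T)\hookrightarrow C^1([0,T])$. This completes the adaptation; Proposition~\ref{sec:clos-graph-prop-14} is then deduced from the present lemma and the counterpart of Lemma~\ref{sec:clos-graph-prop-7} exactly as Proposition~\ref{sec:contr-probl-polyg-1} followed from Lemmas~\ref{sec:clos-graph-prop-12} and~\ref{sec:clos-graph-prop-7}.
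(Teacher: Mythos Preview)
Your proposal is correct and follows the paper's strategy closely. A few remarks on the points of divergence:

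\medskip

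\textbf{Cases 5--6.} Here your approach differs slightly from the paper's. You work in the non-orthogonal vertex coordinates $(\,(y-\nu_j)\cdot n_j,\,(y-\nu_j)\cdot n_{j-1}\,)$ and let the strictly inward component drift linearly while driving the tangential one; admissibility in both faces is then immediate by construction, at the price of carrying the constants of the linear map $A$ through the energy estimates. The paper instead keeps the orthogonal frame $(n_j,n_j^\perp)$ used in Case~3, so that the $n_j$-component is handled exactly as on an edge, and then checks the second constraint $(\xi^i(s)-x)\cdot n_{j-1}\le 0$ by computing $\eta^i(s)\cdot n_{j-1}$ explicitly and showing it stays negative (using $v\cdot n_{j-1}<0$ and $\lim_i (x^i-x)\cdot n_j/t_{i,1}=0$). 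Both routes work; yours avoids that extra computation.

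\medskip

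\textbf{Case 4.} Your sentence lumping Cases~4--6 under the ``straight-line drift on the inactive face'' recipe is imprecise for Case~4: there \emph{both} faces satisfy $v\cdot n_k<0$, so neither is driven and neither is left to drift---one simply uses the single cubic $Q_{t_i,x^i,v^i,\xi(t_i),\eta(t_i)}$ as in Case~2 and applies the positivity argument for $\theta\mapsto\frac34-3\theta+\frac{15}4\theta^2-\theta^3$ to each of the two components. This is what the paper does.

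\medskip

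\textbf{Case 7.} Your construction and the compatibility analysis between the admissibility cap $t_{i,1}\le 3\min_k |\hat x^i_k|/(\hat v^i_k)_+$ and the energy requirement $t_{i,1}\gg |\hat x^i|^{2/3}+|\hat v^i|^2$ match the paper exactly. Note only that the $L^4$ bound on $\hat\eta^i$ you mention is superfluous here: since $A$ is affine there is no $D^2\Psi_x$ curvature term, so the acceleration in $\R^2$ is simply $A^{-1}$ times the acceleration in coordinates and the $L^2$ estimate suffices.
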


\begin{proof}[Proof of Lemma \ref{sec:contr-probl-polyg}]
Since many arguments are almost identical to those contained in the proof of Lemma~\ref{sec:clos-graph-prop-12}, we just sketch the proof and put  the stress on cases 4-7 which have no counterparts in Lemma~\ref{sec:clos-graph-prop-12}. As for Lemma~\ref{sec:clos-graph-prop-12},  each of the seven conditions  mentioned in the statement  makes it possible to explicitly  construct  families of admissible trajectories fulfilling all the desired properties.
  Since the construction is different in each of the seven cases, we discuss each case separately:
  \begin{enumerate}
 \item If $x\in \Omega$, then the construction is exactly the same as in the first case in the proof of  Lemma~\ref{sec:clos-graph-prop-12}. We do not repeat the argument.
  \item $x\in (\nu_j, \nu_{j+1})$ and  $v\cdot n_j<0$, hence
 for   $i$ large enough,   $v^i\cdot n_j<0$.  We can always assume that the latter property holds for all $i$.
Since $\eta\in W^{1,2}(0,T)$, there exists $\overline{t}\in (0,T)$ such that for all
$s\in [0,\overline t]$, $ \frac 3 2 v\cdot n_j \le \eta(s)\cdot n_j \le  \frac 1 2 v\cdot n_j$ and $ \frac {3s} 2 v\cdot n_j  \le (\xi(s)-x)\cdot n_j \le  \frac s 2 v\cdot n_j$.
For $t_i\in [0,\overline t]$, we set
\begin{equation}
  \label{eq:92}
  \xi^i (s)= \left\{
    \begin{array}[c]{rcl}
      Q_{t_i,x^i, v^i , \xi(t_i),\eta(t_i)}(s),\quad & \hbox{ if } \quad s \in [0,t_i],\\
      \xi(s),\quad & \hbox{ if } \quad s\in [ t_i,T],
    \end{array}
\right.
\end{equation}
and the remaining arguments are very close to those in case 2. in the proof of  Lemma~\ref{sec:clos-graph-prop-12}. We skip the details.
\item 
 \begin{enumerate}
 \item $x\in (\nu_j, \nu_{j+1})$, $v\cdot n_j = 0$ and  $v^i\cdot n_j\le 0$ at least for $i$ large enough.
We may assume that  $v^i\cdot n_j\le 0$ for all $i$.  Given $t_{i,1}\in (0,T)$, we define $(y^i, w^i)$ as follows:
 \begin{equation}
   \label{eq:93}
   \begin{array}[c]{rcl}
   y^i&=& (x\cdot n_j) n_j + ( (x^i+ v^i t_{i,1})\cdot n_j^\perp) n_j^\perp, \\     
   w^i&=& (v\cdot n_j) n_j +  ( v^i\cdot n_j^\perp) n_j^\perp=( v^i\cdot n_j^\perp) n_j^\perp ,     
   \end{array}
 \end{equation}
and, for $t_{i,1}\le t_{i,2}<T$, set 
\begin{equation}
  \label{eq:94}
  \xi^i(s)=\left\{
    \begin{array}[c]{rcl}
      Q_{t_{i,1},x^i, v^i , y^i,w^i}(s) ,\quad & \hbox{ if } &\quad s\le t_{i,1},\\
      Q_{t_{i,2}-t_{i,1}, y^i - x, w^i-v, 0,0 }(s-t_{i,1})+ \xi(s-t_{i,1}), \quad &\hbox{ if }& t_{i,1}\le s\le t_{i,2},\\
      \xi(s-t_{i,1}), \quad &\hbox{ if } &t_{i,2}\le s\le T.
    \end{array}
  \right.
\end{equation}
We argue as in case 3.(a) in the proof of  Lemma~\ref{sec:clos-graph-prop-12}. An important observation is that
\begin{equation}
  \label{eq:95}
   (\xi^i(s) -x)\cdot n_j= \left(  (x^i-x)\cdot n_j \left(1+2 \frac {s} {t_{i,1}} \right)  + sv^i\cdot n_j   \right)
  \left(1-\frac {s} {t_{i,1}}\right)^2
\end{equation}
is non positive for $s\in [0,t_{i,1}]$.
 We skip the other details.
\item $x\in (\nu_j, \nu_{j+1})$, $v\cdot n_j = 0$,  $v^i\cdot n_j > 0$ for all  $i$ (or for $i$ large enough), and (\ref{eq:88}) holds. This case is the counterpart of case 3.(b) in the proof of Lemma~\ref{sec:clos-graph-prop-12}. The trajectory $\xi^i$  is constructed as in (\ref{eq:94}), but  a further restriction on $t_{i,1}$ is needed in order to guarantee that the trajectory is admissible, namely that 
\begin{displaymath}
  t_{i,1} \le    \frac {3|(x^i -x)\cdot n_j |} {v^i\cdot n_j}.
\end{displaymath}
This condition should be supplemented with the other two conditions as in  3.(a), namely that
 \begin{eqnarray}
    \label{eq:96}
\lim_{i\to \infty} t_{i,1}=0,
\\ \label{eq:97}
\lim_{i\to \infty} \frac {  |(x-x^i)\cdot n_j|^2 }{t_{i,1}^3} + \frac {|(v-v^i)\cdot n_j|^2 }{t_{i,1}}=0.
  \end{eqnarray}
If (\ref{eq:88}) holds, then it is possible to choose such a sequence $t_{i,1}$. 
The remaining part of the proof is as in case 3.(a).
\end{enumerate}
\item  $x= \nu_j$, $v\cdot n_j<0$ and $v\cdot n_{j-1}<0$. Since $\eta\in W^{1,2}(0,T)$,
 there exists $\overline{t}\in (0,T)$ such that for all
$s\in [0,\overline t]$,  $ \frac 3 2 v\cdot n_k \le \eta(s)\cdot n_k \le  \frac 1 2 v\cdot n_k$ and $ \frac {3s} 2 v\cdot n_k \le (\xi(s)-x)\cdot n_k \le  \frac s 2 v\cdot n_k$,
for  $k=j-1,j$. For $t_i\in [0,\overline t]$, we choose $ \xi^i$ as in (\ref{eq:92}) and the desired result is obtained as in case 2.
\item $x=\nu_j$, $v\cdot n_j=0$ and $v\cdot n_{j-1}<0$. We make out two subcases:
\begin{enumerate}
\item $v^i \cdot n_j\le 0$ at least for $i$ large enough: the trajectory is constructed as in (\ref{eq:94}),
with three different stages corresponding respectively to $s\in [0, t_{i,1}]$, $s\in [t_{i,1},t_{i,2}]$ and $s\ge t_{i,2}$.  As in point 3,  it is always possible to choose the sequence $t_{i,1}$ such that (\ref{eq:96}) and (\ref{eq:97}) hold in order to ensure the $L^2$ convergence of the accelerations.
\\
We need to prove that the trajectory is admissible for well chosen $t_{i,1}$ and $t_{i,2}$.\\
Let us first check that $\xi^i(s)$ remains in $\overline\Omega$ for $s\in [0,t_{i,1}]$.
Since (\ref{eq:95}) holds, we see that $(\xi^i(s)-x)\cdot n_j \le 0$.
 On the other hand,  after some algebra, we get that
\begin{displaymath}
 \begin{split}
        \eta^i(s)\cdot n_{j-1}=\frac{d\xi^i}{ds}(s)\cdot n_{j-1}= &
    v^i \cdot  n_{j-1} -\frac {s}{t_{i,1}}  \left(4- \frac {3s}{t_{i,1}} \right) \; v^i\cdot n_j \;\;  n_j\cdot n_{j-1}\\
      & -6 \frac {s}{t_{i,1}^2} \left(1- \frac {s}{t_{i,1}} \right)\; (x^i - x )\cdot n_j \;\;   n_j\cdot n_{j-1} .
    \end{split}
\end{displaymath}
Since  $ \lim_{i\to \infty} v^i\cdot n_j =0$  and 
$ \lim_{i\to \infty} v^i\cdot n_{j-1}= v\cdot n_{j-1} <0$, we see that for $i$ large enough,
\begin{displaymath}
  \lim_{i\to \infty}   \left(v^i \cdot  n_{j-1}- \frac {s}{t_{i,1}}  \left(4- \frac {3s}{t_{i,1}} \right) \; v^i\cdot n_j \;\;  n_j\cdot n_{j-1}\right)= v\cdot n_{j-1} <0,
\end{displaymath}
uniformly with respect to $s\in [0,t_{i,1}]$. On the other hand, the conditions (\ref{eq:96}) and (\ref{eq:97}) imply that $\lim_{i\to \infty} \frac  {(x^i - x )\cdot n_j}{t_i}=0$. Combining the latter two observations yields that
\begin{displaymath}
  \lim _{i\to \infty} \max_{s\in [0,t_{i,1}]} \left|  (\eta^i(s) - v)\cdot n_{j-1} \right|=0.
\end{displaymath}
Hence,  for $i$ large enough,  $\eta^i(s)\cdot n_{j-1}<0$ for all $s\in t_{i,1}$. This implies that $(\xi^i(s)-x)\cdot n_{j-1} \le 0$ for all $s\in [0,t_{i,1}]$.\\
Combining the information above and arguing essentially as in case 3.(a), we see that it is possible to choose $t_{i,1}$ satisfying (\ref{eq:96}) and (\ref{eq:97}), $t_{i,2}$ bounded away from $0$ uniformly w.r.t. $i$, such that  $(\xi^i, \eta^i)\in \Gamma[x^i, v^i]$ for $i$ large enough and  $\ds \lim_{i\to \infty} \left \|  \frac{d\eta^i}{ds}- \frac{d\eta}{ds} \right\|_{L^2 (0,T)}=0$.
\item $v^i \cdot n_j>0$ at least for $i$ large enough and (\ref{eq:88}) holds: again, the trajectory is constructed as in (\ref{eq:94}) with $t_{i,1}$ satisfying (\ref{eq:96})-(\ref{eq:97}).
As in 3.(b), a further restriction is needed on $t_{i,1}$ such that the trajectory is admissible.
\\
For $0\le s\le t_{i,1}$, $(\xi^i(s) -x)\cdot n_j$ is given by (\ref{eq:95}) and is non positive if 
\begin{equation}
  \label{eq:98}
  t_{i,1} \le    \frac {3|(x^i -x)\cdot n_j |} {v^i\cdot n_j}.
\end{equation}
On the other hand, the proof that $ (\xi^i(s) -x)\cdot n_{j-1}\le 0$ for $i$ large enough and all $s\in [0,t_{i,1}]$ is the same as in subcase 5.(a).  Hence if the sequence $(t_{i,1})$ satisfies (\ref{eq:96})-(\ref{eq:97}) and (\ref{eq:98}),  then for $i$ large enough, $\xi^ i (s)\in \overline \Omega$ for for all $s\in [0,t_{i,1}]$.
Constructing such a sequence $(t_{i,1})$ is possible thanks to (\ref{eq:88}).
\\
Then, using the fact that $v\cdot n_{j-1}<0$ and arguing as in case 3.,  it is possible to choose the sequence $t_{i,2}$ bounded from below by a positive constant independent of $i$ such that $(\xi^i,\eta^i)\in \Gamma[x^i, v^i]$ and $  \lim_{i\to \infty} \left\|\frac {d\eta^i}{dt} -\frac {d\eta}{dt} \right\| _{L^2(0,T)}=0$.

\end{enumerate}
\item Same arguments as for case 5., exchanging the roles of $j$ and $j-1$.

\item 
 The trajectory $\xi^i$ is constructed as follows:
    \begin{equation}
      \label{eq:99}
      \xi^i(s)=\left\{
        \begin{array}[c]{rcl}
          Q_{t_{i},x^i, v^i , x ,0}(s) ,\quad & \hbox{ if } &\quad s\le t_{i},\\
          \xi(s-t_{i}), \quad &\hbox{ if } &t_{i}\le t\le T,
        \end{array}
      \right.
    \end{equation}
We see that, for $k=j-1, j$ and $s\in [0,t_{i}]$, 
  \begin{displaymath}
      (\xi^i (s) -x)\cdot n_{k}= \left(1-\frac s {t_{i}}\right)^2 \left( \left( 1+2  \frac s {t_{i}}  \right) (x^i-x)\cdot n_k +s v^i\cdot n_k  \right).
    \end{displaymath}
 Hence, a sufficient condition for $\xi^i(s)$ to stay in $\overline \Omega$ for all $s\in [0,t_{i}]$ is that $ t_{i}\le 3  \min_{k\in \{j-1,j\}}  \frac {(x-x^i)\cdot n_k}{ (v^i\cdot n_k)_+}$, with the convention that $    \frac {(x-x^i)\cdot n_k }  { (v^i\cdot n_k)_+} =+\infty$ if $(v^i\cdot n_k)_+=0$.
 Then, we also need that  $\lim_{i\to \infty} t_{i}=0$ and that $\lim_{i\to \infty} \frac {  |x-x^i|^2 }{t_{i}^3} + \frac {|v^i|^2 }{t_{i}}=0$
in order to obtain that $\lim_{i\to \infty} \left \| \frac {d \eta^i} {dt}\right\| _{L^2 (0,t_i)} =0$.
From (\ref{eq:89}), it is possible to construct a sequence $(t_i)_i$ fulfilling all  the  desired properties.
\end{enumerate}
\end{proof}
Proposition~\ref{sec:contr-probl-conv-1bis} below is the counterpart of Proposition~\ref{sec:contr-probl-conv-1}:
 \begin{proposition}\label{sec:contr-probl-conv-1bis}
 Given $r>0$,  let us define $\Theta_r$ by  \eqref{eq:35}
where $K_r$ is defined by (\ref{eq:33}) and $\Theta$ is a closed subset of $\Xi^{\rm ad}$ which satisfies the assumption in Proposition \ref{sec:clos-graph-prop-14}.
\\
 Under Assumption~\ref{sec:setting-notations-0}, the value function $u$ given by \eqref{eq:7}
 is continuous  on $ \Theta_r$.\\
There exists a positive number
 $C= C(r, M)  $  such that
if  $(x,v)\in \Theta_r$, then $\Gamma^{\rm opt} [x,v]\subset  \Gamma_C$, where $\Gamma_C$ is defined in (\ref{eq:34}).
 \end{proposition}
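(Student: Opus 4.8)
The plan is to follow almost verbatim the proof of Proposition~\ref{sec:contr-probl-conv-1}, replacing Lemma~\ref{sec:clos-graph-prop-12} by its polygonal counterpart Lemma~\ref{sec:contr-probl-polyg}, and replacing Lemma~\ref{sec:contr-probl-conv} by its (unstated but straightforward) polygonal analogue. So first I would record that, in the present convex polygonal geometry, the value function $u$ defined in \eqref{eq:7} is finite on $\Xi^{\rm ad}$ and lower semi-continuous there. Finiteness is obtained exactly as in Lemma~\ref{sec:contr-probl-conv}: if $x\in\Omega$, or $x\in\partial\Omega$ and $v$ points strictly inward, one brakes to rest along a segment; if $x=\nu_j$ and $v\in T_\Omega(\nu_j)$ (including $v=0$), one uses that $\overline\Omega$ coincides locally near $\nu_j$ with $\nu_j+T_\Omega(\nu_j)$, so that the affine-braking trajectory $s\mapsto x+(s-s^2/(2\overline t))v$ on $[0,\overline t]$ followed by the rest state stays in $\overline\Omega$ for $\overline t$ small enough; an analogous construction handles the open edges. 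Lower semi-continuity of $u$ on $\Xi^{\rm ad}$ again follows from standard arguments of the calculus of variations (coercivity of the quadratic term in $\alpha$, continuity of $\ell$ and $g$, and closedness of $\Gamma[x,v]$ under $C^1\times C^0$-convergence of $(\xi,\eta)$ together with weak $W^{1,2}$-convergence of the accelerations).

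Next I would prove upper semi-continuity of $u$ on $\Theta_r$. Let $(x,v)\in\Theta_r$ and let $(x^i,v^i)_{i\in\N}\subset\Theta_r$ converge to $(x,v)$; it suffices to show $\limsup_i u(x^i,v^i)\le u(x,v)$. Passing to a subsequence that realizes this $\limsup$, and then, thanks to the hypothesis imposed on $\Theta$ in Proposition~\ref{sec:clos-graph-prop-14}, to a further subsequence along which one among the seven conditions of Lemma~\ref{sec:contr-probl-polyg} holds, I would pick an optimal trajectory $(\xi,\eta)\in\Gamma^{\rm opt}[x,v]$ (which exists by the calculus of variations together with the finiteness just established) and apply Lemma~\ref{sec:contr-probl-polyg} to produce $(\xi^i,\eta^i)\in\Gamma[x^i,v^i]$ with $\eta^i\in W^{1,2}(0,T;\R^2)$ and $(\xi^i,\eta^i)\to(\xi,\eta)$ in $W^{2,2}\times W^{1,2}$. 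Then $J(\xi^i,\eta^i,\frac{d\eta^i}{dt})\to J(\xi,\eta,\eta')=u(x,v)$ while $J(\xi^i,\eta^i,\frac{d\eta^i}{dt})\ge u(x^i,v^i)$, so the chosen subsequence satisfies $\lim_i u(x^i,v^i)\le u(x,v)$, i.e. the original $\limsup$ is $\le u(x,v)$. Combined with the lower semi-continuity above, this shows $u$ is continuous on $\Theta_r$; since $\Theta_r=\Theta\cap K_r$ is closed and bounded in $\R^2\times\R^2$, hence compact, $u$ is bounded on $\Theta_r$ by some $\overline M=\overline M(r,M)$.

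Finally, for the bound $\Gamma^{\rm opt}[x,v]\subset\Gamma_C$: if $(x,v)\in\Theta_r$ and $(\xi,\eta)\in\Gamma^{\rm opt}[x,v]$, then $J(\xi,\eta,\eta')=u(x,v)\le\overline M$; since under Assumption~\ref{sec:setting-notations-0} the running and terminal costs are bounded below with $M=\|g_-\|_{L^\infty(\Xi)}+\|\ell_-\|_{L^\infty(\Xi\times[0,T])}$, this yields $\frac{1}{2}\|\eta'\|_{L^2(0,T)}^2\le\overline M+M$. Hence $\|\eta'\|_{L^2(0,T)}\le\sqrt{2(\overline M+M)}$, and since $\eta(t)=v+\int_0^t\eta'$ with $|v|\le r$, also $\|\eta\|_{L^\infty(0,T)}\le r+\sqrt{T}\,\sqrt{2(\overline M+M)}$; together with $\xi(t)\in\overline\Omega$ (a bounded set), this gives $(\xi(t),\eta(t))\in K_C$ for all $t\in[0,T]$ and $\|\eta'\|_{L^2(0,T)}\le C$ for a suitable $C=C(r,M)$, that is, $\Gamma^{\rm opt}[x,v]\subset\Gamma_C$. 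The only genuinely new ingredient relative to Proposition~\ref{sec:contr-probl-conv-1} is the use of the seven-case Lemma~\ref{sec:contr-probl-polyg} in place of the three-case Lemma~\ref{sec:clos-graph-prop-12}, together with the polygonal finiteness/semi-continuity statement; the rest is essentially unchanged. The main point requiring care is the subsequence extraction ensuring that exactly one of the seven alternatives of Lemma~\ref{sec:contr-probl-polyg} is in force, which is possible precisely because $\Theta$ satisfies the hypotheses of Proposition~\ref{sec:clos-graph-prop-14} (in particular conditions \eqref{eq:85}--\eqref{eq:87} near edges and vertices).
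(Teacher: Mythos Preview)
Your proposal is correct and is exactly the approach the paper intends: the paper does not give an explicit proof of Proposition~\ref{sec:contr-probl-conv-1bis} but simply presents it as ``the counterpart of Proposition~\ref{sec:contr-probl-conv-1}'', so the argument is precisely to rerun that proof with Lemma~\ref{sec:contr-probl-polyg} in place of Lemma~\ref{sec:clos-graph-prop-12} and with the obvious polygonal version of Lemma~\ref{sec:contr-probl-conv}. Your handling of the subsequence extraction (so that one of the seven alternatives of Lemma~\ref{sec:contr-probl-polyg} applies, using the hypotheses \eqref{eq:85}--\eqref{eq:87} on $\Theta$) and of the final bound on $\|\eta'\|_{L^2}$ and $\|\eta\|_{L^\infty}$ are both correct.
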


\subsection{Mean field games  with state constraints}\label{sec:mean-field-games-2}
All the results obtained in Section \ref{sec:MFG-contr-accel} can be generalized to the case when $\Omega$ is a bounded and convex polygonal domain of $\R^2$, provided the initial distribution of states $m_0$ is supported in $\Theta_r$ defined as in Proposition \ref{sec:contr-probl-conv-1bis}.

\bigskip

\noindent {\bf Acknowledgments.}
We would like to thank P. Cardaliaguet for an enlightening discussion concerning the argument in Paragraph~\ref{sec:mean-field-games-1}.  YA and NT were  partially supported by the ANR (Agence Nationale de la Recherche) through MFG project ANR-16-CE40-0015-01. PM and CM were partially supported by GNAMPA-INdAM and by the Fondazione CaRiPaRo Project ``Nonlinear Partial Differential Equations: Asymptotic Problems and Mean-Field Games''.

{\small
\bibliographystyle{plain}
\bibliography{MFG_acc}
}

\end{document}